\theoremstyle{definition}
\newtheorem{defn}{Definition}[section]
\newtheorem{ex}[defn]{Example}
\newtheorem{rem}[defn]{Remark}
\theoremstyle{plain}
\newtheorem{thm}[defn]{Theorem}
\newtheorem{prop}[defn]{Proposition}
\newtheorem{lem}[defn]{Lemma}
\newtheorem{cor}[defn]{Corollary}
\numberwithin{equation}{section}
\def \alt96 {`}
\def \R {\mathds{R}}
\def \dsy {\displaystyle}
\def \loc {\mathrm{loc}}
\def \N {\mathds{N}}
\def \d {\mathrm{d}}
\def \longto {\longrightarrow}
\begin{document}
 \author{Stefano Biagi, Alessandro Calamai and Francesca Papalini}
 \title{Existence results for
  boundary value problems associated with 
  singular strongly nonlinear equations}
\maketitle
\begin{abstract}
We consider a strongly nonlinear differential equation of the following 
general type
$$(\Phi(a(t,x(t)) \, x'(t)))'= f(t,x(t),x'(t)), \quad \text{a.e.\, on $[0,T]$}$$
where $f$ is a Carath\'edory function, $\Phi$ is a strictly increasing ho\-meo\-mor\-phism 
(the $\Phi$-Laplacian operator)
and the function $a$ is continuous and non-negative.
We assume that $a(t,x)$ is bounded from below by a non-negative function $h(t)$, independent of $x$ and such that
$1/h \in L^p(0,T)$ for some $p> 1$,
and we require a weak growth condition of Wintner-Nagumo type.
Under these assumptions, we prove existence results for the Dirichlet problem associated to the above equation, as well as for different boundary conditions.
Our approach combines fixed point tech\-ni\-ques and the upper/lower solutions method.
\end{abstract}
\section{Introduction}
 Recently many papers have been devoted to the study of boundary va\-lue problems
 (BVPs for short) associated to 
 nonlinear ODEs involving 
 the so-called \textit{$\Phi$-Laplace operator}
 (see, e.g., \cite{cpr,CabadaPo,CabadaPo2,KFA}). 
 Namely, ODEs of the type 
 \begin{equation} \label{eq:intro1}
  \big(\Phi(x')\big)' = f(t,x,x'), 
 \end{equation}
 where $f$ is a Carath\'edory function and 
 $\Phi : \R \to \R$ is a strictly increasing homeomorphism such that $\Phi(0)=0$.

 The class of $\Phi$-Laplacian operators includes as a special case the classical $r$-Laplacian
 $\Phi(y):=y|y|^{r-2}$, with $r>1$.
 Such operators arise in some mo\-dels, e.g. in non-Newtonian
 fluid theory, diffusion of flows in porous media, nonlinear elasticity
 and theory of capillary surfaces.
 Other models
 (for e\-xam\-ple re\-action\--dif\-fu\-sion equations with non-constant diffusivity and
 porous media equations) lead to consider
 mixed differential operators, that is, dif\-fe\-ren\-tial equations of the type
 \begin{equation} \label{eq:intro2}
  \big(a(x)\,\Phi(x')\big)' = f(t,x,x'), 
 \end{equation}
 where $a$ is a continuous positive function (see, e.g., \cite{cmp1}).
 Furthermore, several papers have been devoted to the case of singular
 or non-surjective operators (see \cite{bm08,ca,FerrPap}).
 Usually, these existence results stem from a combination of fixed point techniques 
 with the upper and lower solutions method.
 In this context, an important tool to get a priori bounds 
 for the derivatives of the solutions is a Nagumo-type growth condition on the function $f$.
 Let us observe that, when in the differential operator is present the nonlinear term $a$,
 some assumptions are required on 
 the differential operator $\Phi$, 
 which in general is assumed to be homogeneous, or having at most linear growth at infinity.

 More recently, in collaboration with Cristina Marcelli, we considered two dif\-fe\-rent 
 generalizations of equation
 \eqref{eq:intro2}.
 In the paper \cite{MaPa2017}, it is investigated the case in which the function $a$
 may depend also on $t$.
 More precisely,
 the authors obtain existence results for general
 boundary value problems associated with
 the equation
 \begin{equation*}  \label{eq:mp}
  \big(a(t,x(t))\,\Phi(x'(t))\big)'= f(t,x(t),x'(t)), \quad \text{a.e.\,on $I:= [0,T]$}
 \end{equation*}
 where $a$ continuous and  positive, and
 assuming a weak form of Wintner-Nagumo growth condition.
 Namely,
 \begin{equation} \label{ip:psi2}
  \big|f(t,x,y)\big| \leq 
  \psi\Big(a(t,x)\,|\Phi(y)|\Big)\cdot\Big(\ell(t) + \nu(t)\,|y|^{\frac{s-1}{s}}\Big), 
 \end{equation}
 where $\nu \in L^s(I)$ (for some $s > 1$), $\ell\in  L^1(I)$
 and $\psi:(0,\infty)\to(0,\infty)$ is a measurable function such that
 $1/\psi\in L^1_{\loc}(0,\infty)$ and
 $$ \int_1^{\infty}
 \frac{\d s}{\psi(s)} = \infty. $$
 This assumption
 is weaker than other Nagumo-type conditions previously con\-si\-de\-red,
 and allows to consider a very general operator $\Phi$,
 which can be only strictly increasing,
 not necessarily homogeneous, nor having polynomial growth.
 Let us also observe that the same equation
 \begin{equation*}
  \big(a(t,x)\,\Phi(x')\big)' = f(t,x,x')
 \end{equation*}
 was studied in \cite{Ma2012,Ma2013} to obtain heteroclinic solutions on the real line.

 On the other hand, in \cite{CaMaPa} \textit{possibly singular equations}
 are considered,
 including a non-autonomous differential operator which has an explicit 
 de\-pen\-dence 
 on $t$ inside $\Phi$. Namely
 \begin{equation} \label{eq:intro3} 
  \Big(\Phi\big(k(t)\,x'(t)\big)\Big)'=f(t,x(t),x'(t)), \text{a.e.\,on $I$}
 \end{equation}
 where the function $k$ is allowed to vanish in a set having null measure, 
 so that equation \eqref{eq:intro3} can become singular.
 \cite{CaMaPa} assumed $1/k \in L^p(I)$ and look for solutions in the space $W^{1,p}(I)$,
 rather than $C^{1}(I,\R)$.
 According our knowledge, very few papers have been devoted to this type of equations, and just for a 
 restricted class of nonlinearities $f$
 (see \cite{liu12,liuyang}).
  \medskip
 
 In this paper we tackle a further generalization of equation \eqref{eq:intro3}, 
 al\-low\-ing also a dependence on $x$ inside $\Phi$.
 More in detail, we consider the BVP
 \begin{equation} \label{eq.mainBVP-intro}
	 \begin{cases}
   		\dsy\Big(\Phi\big(a(t,x(t))\,x'(t)\big)\Big)' = f(t,x(t),x'(t)), & \text{a.e.\,on $I$}, \\[0.2cm]
   		x(0) = \nu_1,\,\,x(T) = \nu_2.
	 \end{cases}
 \end{equation}
 where $\nu_1, \nu_2 \in \R$, $\Phi:\R \to \R$ is a strictly increasing homeomorphism,
 $f$ is a Carath\'eodory function
 and $a:I\times\R\to \R$ is a continuous non-negative function satisfying
 the following estimate from below:
 \begin{equation} \label{est.a.intro}
  a(t,x)\geq h(t) \ \text{for every $t\in I$ and every $x\in\R$},
 \end{equation}
 where
 $h\in C(I,\R)$ is non-negative and such that
 $1/h \in L^p(I)$ for some $p> 1$.
 Notice that, differently to other papers quoted above, here we do not
  require the positivity of the function $a$, and thus
  the equation in \eqref{eq.mainBVP-intro}
  may be singular. Consequently, as
  in \cite{CaMaPa}, we look for solutions in $W^{1,1}(I)$.

 For example, estimate \eqref{est.a.intro} is verified when $a(t,x)$ has a simpler 
 struc\-tu\-re of a product or of a sum, as
 in the following special cases:
 \begin{itemize}
 \item[$(\star)$] if $a(t,x)=h(t)\cdot b(x),$
	where $h$ is continuous, non-negative and such that 
 	$1/h\in L^p(I)$, and
 	$b$ is continuous and such that $\inf_\R b > 0$; \medskip
 \item[$(\star)$] if $a(t,x)=h(t)+b(x),$
	where $h$ is continuous, non-negative and such that 
 	$1/h\in L^p(I)$, and
 	$b$ is continuous and non-negative.
\end{itemize} 
 
 \medskip
 Our main result, Theorem \ref{thm.mainconcrete} below, yields the existence of a 
 solution of the Dirichlet problem
 \eqref{eq.mainBVP-intro} assuming a weak Wintner-Nagumo condition, similar to the one in \eqref{ip:psi2}.
  Theorem \ref{thm.mainconcrete} extends in a natural way the main result in \cite{CaMaPa}, in the case
  when $a(t,x)=k(t)$ does not depend on $x$. 
 The proof is obtained by the method of lower/upper solutions, 
 combined with a fixed point technique applied to an auxiliary functional Dirichlet problem
 (see Section \ref{sec:abstractset}). 
 In Section  \ref{sec:concrete} we provide some illustrating examples in which 
 our main result can be applied. 
 
 Finally,
 in Section \ref{sec:general} we consider different BVPs, such as the periodic problem, 
 Neu\-mann-type problem and Sturm-Liouville-type problem,
 and wi\-th clas\-sical tech\-ni\-ques we derive existence results.


\section{The abstract setting} \label{sec:abstractset}
 Let $T > 0$ be fixed and let $I := [0,T]$. Moreover, let $\nu_1,\nu_2\in\R$.
 Throughout this section, we shall be concerned with
 {BVPs} of the type
 \begin{equation} \label{eq.mainBVPAbstract}
   \begin{cases}
   \dsy\Big(\Phi\big(A_x(t)\,x'(t)\big)\Big)' = F_x(t), & \text{a.e.\,on $I$}, \\[0.2cm]
   x(0) = \nu_1,\,\,x(T) = \nu_2,
   \end{cases}
  \end{equation}
  where $\Phi, A$ and $F$ satisfy
  the following structural assumptions:
  \begin{itemize}
   \item[(H1)] $\Phi:\R\to\R$ is a \emph{strictly increasing homeomorphism}; \medskip
   \item[(H2)] $A:W^{1,p}(I)\subseteq C(I,\R)\to C(I,\R)$ is continuous
   wrt the \emph{uniform topology} of $C(I,\R)$; 
   moreover, there exist $h_1,h_2\in C(I,\R)$ such that \medskip
	\begin{itemize}
	 \item[$(\mathrm{H2})_1$] $h_1,h_2\geq 0$ on $I$ and 
	 there exists $p > 1$ such that
	 $$1/h_1,1/h_2\in L^p(I);$$
	 \item[$(\mathrm{H2})_2$] $h_1(t)\leq A_x(t)\leq h_2(t)$ for every $x\in W^{1,p}(I)$ and every $t\in I$;
	\end{itemize}
	\medskip
   \item[(H3)] $F:W^{1,p}(I)\to L^1(I)$ is continuous 
   (with respect to the usual norms) and there exists a non-negative function
   $\psi\in L^1(I)$ such that
   \begin{equation} \label{eq.psiboundsF}
    |F_x(t)| \leq \psi(t) \quad \text{for every $x\in W^{1,p}(I)$ and a.e.\,$t\in I$}.
   \end{equation}
  \end{itemize}
  \begin{rem} \label{rem.Axprop}
   We point out that, as a consequence of assumptions
   $\mathrm{(H2)}_1$ and $\mathrm{(H2)}_2$, 
   for every $x\in W^{1,p}(I)$ we have
   $A_x\geq h_1 \geq 0$ and
   $$\int_0^T\frac{1}{h_2(t)}\,\d t \leq \int_0^T\frac{1}{A_x(t)}\,\d t
   \leq \int_0^T\frac{1}{h_1(t)}\,\d t.$$
   Since $1/h_1,1/h_2\in L^p(I)$, then the same is true of 
   $1/A_x$ (for any $x\in W^{1,p}(I)$).
  \end{rem}
   In the sequel, we shall indicate by $\mathcal{F}$ the integral operator associated
   with $F$, that is, the operator $\mathcal{F}: W^{1,p}(I)\to C(I,\R)$ defined by
   $$\mathcal{F}_x(t) := \int_0^t F_x(s)\,\d s.$$
   \begin{rem} \label{rem.intFcont}
    We observe, for future reference, that $\mathcal{F}$ is \emph{continuous}
    from $W^{1,p}(I)$ to $C(I,\R)$: this follows
    from the continuity of $F$ and from the estimate
    (holding true for every $x,y\in W^{1,p}(I)$)
    \begin{equation}
     \sup_{t\in I}|\mathcal{F}_x(t)-\mathcal{F}_y(t)|
     \leq \|F_x-F_y\|_{L^1}.
    \end{equation}
    Furthermore, assumption
    \eqref{eq.psiboundsF} gives
	\begin{equation} \label{eq.estimpsiintF}
    \sup_{t\in I}|\mathcal{F}_x(t)|\leq \|\psi\|_{L^1},
    \quad \text{for every $x\in W^{1,p}(I)$}.
    \end{equation}
   \end{rem}
   \begin{defn} \label{def.solutionBVP}
  We say that a {continuous}
  function $x\in C(I,\R)$ is a \textbf{solution} of the boundary
  value problem \eqref{eq.mainBVPAbstract} if it satisfies
  the following properties:  
  \begin{itemize}
   \item[(1)] $x\in W^{1,p}(I)$ and $t\mapsto\Phi\big(A_x(t)\,x'(t)\big)\in W^{1,1}(I)$;
   \item[(2)] $\Big(\Phi\big(A_x(t)\,x'(t)\big)\Big)' = F_x(t)$ for a.e.\,$t\in I$;
   \item[(3)] $x(0) = \nu_1$ and $x(T) = \nu_2$.
  \end{itemize}
  \end{defn}
  \begin{rem} \label{rem.definitionAxcont}
   We point out that, if $x\in W^{1,p}(I)$ is a so\-lu\-tion
   of
   \eqref{eq.mainBVPAbstract}, by condition (1) in Definition \ref{def.solutionBVP}
   (and the fact that $\Phi$ is a ho\-meo\-mor\-phism, see assumption
   (H1)), there exists a \emph{unique} $\mathcal{A}_x\in C(I,\R)$ such that
   $$\mathcal{A}_x(t) = A_x(t)\,x'(t) \quad \text{for a.e.\,$t\in I$}.$$
   We shall use this fact in the next Section \ref{sec:concrete}.
  \end{rem}
  The main result of this section is the following \emph{existence result}.
  \begin{thm} \label{thm.mainabstract}
   Under the structural assumptions \emph{(H1), (H2)}
   and \emph{(H3)}, the boun\-da\-ry value problem
   \eqref{eq.mainBVPAbstract} admits at least one solution $x\in W^{1,p}(I)$.
  \end{thm}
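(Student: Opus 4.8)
The plan is to recast \eqref{eq.mainBVPAbstract} as a fixed point problem and apply \emph{Schauder's fixed point theorem} in the Banach space $W^{1,p}(I)$. First I would integrate the equation once: if $x$ is a solution, then $\Phi(A_x(t)\,x'(t)) = c + \mathcal{F}_x(t)$ for some constant $c\in\R$, whence, inverting $\Phi$ (licit by (H1)) and dividing by $A_x$ (licit by Remark \ref{rem.Axprop}), one has $x'(t) = \Phi^{-1}\big(c + \mathcal{F}_x(t)\big)/A_x(t)$ a.e.\ on $I$. Imposing $x(0)=\nu_1$ and integrating, the condition $x(T)=\nu_2$ forces the constant to solve $G(x,c)=\nu_2-\nu_1$, where
\begin{equation*}
 G(x,c) := \int_0^T \frac{1}{A_x(s)}\,\Phi^{-1}\big(c + \mathcal{F}_x(s)\big)\,\d s.
\end{equation*}
I would first show that, for each fixed $x\in W^{1,p}(I)$, the map $c\mapsto G(x,c)$ is a continuous, strictly increasing bijection of $\R$ onto $\R$ (monotonicity and injectivity from the strict monotonicity of $\Phi^{-1}$ and positivity of $1/A_x$; surjectivity since $\Phi^{-1}(y)\to\pm\infty$ as $y\to\pm\infty$, via dominated convergence). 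Hence there is a \emph{unique} $c=c(x)$ with $G(x,c(x))=\nu_2-\nu_1$, and the operator
\begin{equation*}
 (Tx)(t) := \nu_1 + \int_0^t \frac{1}{A_x(s)}\,\Phi^{-1}\big(c(x) + \mathcal{F}_x(s)\big)\,\d s
\end{equation*}
is well defined, its fixed points being exactly the solutions of \eqref{eq.mainBVPAbstract}.

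The first genuinely substantial step is a \emph{uniform} a priori bound $C_-\le c(x)\le C_+$ with $C_\pm$ independent of $x$. Writing $|\mathcal{F}_x|\le P:=\|\psi\|_{L^1}$ by \eqref{eq.estimpsiintF}, using the monotonicity of $\Phi^{-1}$ and the control $\int_0^T \d s/A_x \ge \int_0^T \d s/h_2 =: m > 0$ coming from $A_x\le h_2$, I would argue that if $c(x)$ were too large then $\Phi^{-1}(c(x)-P)>0$ would give $\nu_2-\nu_1 = G(x,c(x)) \ge m\,\Phi^{-1}(c(x)-P)$, impossible once $\Phi^{-1}(c(x)-P)$ exceeds $(\nu_2-\nu_1)/m$; a symmetric estimate with $\Phi^{-1}(c(x)+P)<0$ bounds $c(x)$ from below. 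With this bound, $c(x)+\mathcal{F}_x(t)$ stays in a fixed compact interval, so $|\Phi^{-1}(c(x)+\mathcal{F}_x(t))|\le K$ for a constant $K$, and therefore $|(Tx)'(t)|\le K/h_1(t)$. Since $1/h_1\in L^p(I)$, this shows $T$ maps $W^{1,p}(I)$ into the closed, bounded, convex (and nonempty) set $\mathcal{K}:=\{x\in W^{1,p}(I): x(0)=\nu_1,\ \|x'\|_{L^p}\le K\,\|1/h_1\|_{L^p}\}$.

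It remains to verify the hypotheses of Schauder's theorem for $T\colon\mathcal{K}\to\mathcal{K}$, and this is where the main difficulty lies, since $A$ is continuous for the uniform topology while $F$ is continuous for the $W^{1,p}$-topology, and $1/A_x$ may be genuinely singular near the zeros of $h_1$. For relative compactness of $T(\mathcal{K})$ in $W^{1,p}(I)$ I would take $x_n\in\mathcal{K}$: boundedness in $W^{1,p}$ gives (up to subsequence, by reflexivity of $W^{1,p}$ with $p>1$) $x_n\to\bar x$ in $C(I,\R)$ with $\bar x\in\mathcal{K}$; then (H2) yields $A_{x_n}\to A_{\bar x}$ uniformly, hence $1/A_{x_n}\to 1/A_{\bar x}$ in $L^p(I)$ by dominated convergence (dominating function $1/h_1$); the family $\{\mathcal{F}_{x_n}\}$ is equibounded and equicontinuous by \eqref{eq.psiboundsF}, so Ascoli--Arzel\`a gives $\mathcal{F}_{x_n}\to g$ uniformly, and $c(x_n)\to\bar c$ along a further subsequence by the a priori bound. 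Writing $(Tx_n)'$ as the product of the $L^p$-convergent factor $1/A_{x_n}$ and the uniformly convergent, uniformly bounded factor $\Phi^{-1}(c(x_n)+\mathcal{F}_{x_n})$, I would conclude $(Tx_n)'\to \Phi^{-1}(\bar c+g)/A_{\bar x}$ in $L^p(I)$, hence $Tx_n$ converges in $W^{1,p}(I)$. The continuity of $T$ on $\mathcal{K}$ follows from the same product estimate once $x_n\to x$ \emph{in $W^{1,p}$}: now (H3) together with the estimate in Remark \ref{rem.intFcont} identifies $g=\mathcal{F}_x$, while passing to the limit in $G(x_n,c(x_n))=\nu_2-\nu_1$ (dominated convergence) and the uniqueness of the root force $c(x_n)\to c(x)$. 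Schauder's theorem then yields a fixed point $x=Tx\in\mathcal{K}$; a direct check shows $\Phi(A_x\,x')=c(x)+\mathcal{F}_x\in W^{1,1}(I)$ with derivative $F_x$, and $x(0)=\nu_1$, $x(T)=\nu_2$, so $x$ is a solution in the sense of Definition \ref{def.solutionBVP}.
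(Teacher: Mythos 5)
Your proposal is correct and follows essentially the same route as the paper: the same fixed-point operator (the paper's $\mathcal{P}$, with your $c(x)$ playing the role of $\xi_x$ from Lemma \ref{lem.constant}), the same uniform a priori bound on the integration constant, and the same boundedness/continuity/compactness verification feeding into Schauder's theorem. The only cosmetic differences are that you restrict $T$ to a closed bounded convex set instead of invoking the bounded-range form of Schauder on all of $W^{1,p}(I)$, and that you obtain compactness via Ascoli--Arzel\`a applied to the equicontinuous primitives $\mathcal{F}_{x_n}$ where the paper applies Dunford--Pettis to $F_{x_n}$; both work.
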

  The proof of Theorem \ref{thm.mainabstract} requires
  some preliminary facts.
  \begin{lem} \label{lem.constant}
   For every $x\in W^{1,p}(I)$, there exists a 
   \emph{unique} $\xi_x\in\R$ such that
   \begin{equation} \label{eq.defixix}
    \int_0^T\frac{1}{A_x(t)}\,\Phi^{-1}\big(\xi_x + \mathcal{F}_x(t)\big)\,\d t 
    = \nu_2-\nu_1.
   \end{equation}
   Furthermore, there exists a universal constant $\mathbf{c}_0 > 0$ such that
   \begin{equation} \label{eq.estimxix}
    |\xi_x| \leq \mathbf{c}_0 \quad \text{for every $x\in W^{1,p}(I)$}.
   \end{equation}
  \end{lem}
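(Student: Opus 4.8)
The plan is to fix $x\in W^{1,p}(I)$ and study the real function
$$G_x(\xi) := \int_0^T \frac{1}{A_x(t)}\,\Phi^{-1}\big(\xi + \mathcal{F}_x(t)\big)\,\d t, \qquad \xi\in\R,$$
showing that the equation $G_x(\xi)=\nu_2-\nu_1$ has a unique root which moreover satisfies a uniform bound. First I would verify that $G_x$ is well defined: since $\Phi$ is a homeomorphism, $\Phi^{-1}$ is continuous and strictly increasing, and by \eqref{eq.estimpsiintF} we have $|\mathcal{F}_x(t)|\le\|\psi\|_{L^1}$, so for each fixed $\xi$ the integrand is bounded by a constant multiple of $1/A_x\le 1/h_1\in L^1(I)$ (Remark \ref{rem.Axprop}); hence $G_x(\xi)$ is finite. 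The same $L^1$-dominating function on each bounded $\xi$-interval yields, via dominated convergence, that $G_x$ is continuous, while positivity of $1/A_x$ together with strict monotonicity of $\Phi^{-1}$ shows that $G_x$ is \emph{strictly increasing}. This last fact already gives uniqueness of any root.

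For existence, I would prove that $G_x(\xi)\to\pm\infty$ as $\xi\to\pm\infty$. Writing $P:=\|\psi\|_{L^1}$ and using $\xi-P\le\xi+\mathcal{F}_x(t)\le\xi+P$ with the monotonicity of $\Phi^{-1}$, one obtains the two-sided estimate
$$\Phi^{-1}(\xi-P)\int_0^T\frac{\d t}{A_x(t)}\;\le\;G_x(\xi)\;\le\;\Phi^{-1}(\xi+P)\int_0^T\frac{\d t}{A_x(t)}.$$
By Remark \ref{rem.Axprop} one has $\int_0^T 1/A_x\ge\int_0^T 1/h_2=:m_2>0$, a positive lower bound independent of $x$; since $\Phi^{-1}(\xi\mp P)\to\pm\infty$, the outer factors drive $G_x(\xi)$ to $+\infty$ as $\xi\to+\infty$ and to $-\infty$ as $\xi\to-\infty$. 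The intermediate value theorem then furnishes $\xi_x$ with $G_x(\xi_x)=\nu_2-\nu_1$, unique by the strict monotonicity established above.

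The crux of the statement, and the step I expect to be the most delicate, is the \emph{universal} estimate \eqref{eq.estimxix}, because the lower bound $m_2$ on $\int 1/A_x$ may only be used when multiplying a quantity of a definite sign. Evaluating the two-sided estimate at $\xi=\xi_x$ gives
$$\Phi^{-1}(\xi_x-P)\int_0^T\frac{\d t}{A_x(t)}\;\le\;\nu_2-\nu_1\;\le\;\Phi^{-1}(\xi_x+P)\int_0^T\frac{\d t}{A_x(t)}.$$
For the upper bound I would distinguish cases: if $\xi_x\ge\Phi(0)+P$ then $\Phi^{-1}(\xi_x-P)\ge 0$, so it may be multiplied by $m_2$ to get $\Phi^{-1}(\xi_x-P)\,m_2\le\nu_2-\nu_1$, and applying the increasing map $\Phi$ yields $\xi_x\le\Phi\big((\nu_2-\nu_1)/m_2\big)+P$; otherwise $\xi_x<\Phi(0)+P$ directly. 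A symmetric argument on the right-hand inequality (now exploiting that $\Phi^{-1}(\xi_x+P)\le 0$ when $\xi_x\le\Phi(0)-P$, so that multiplying by $m_2$ reverses no inequality) bounds $\xi_x$ from below by $\Phi\big((\nu_2-\nu_1)/m_2\big)-P$ or else $\Phi(0)-P$. As $P$, $m_2$, $\Phi(0)$ and $\nu_1,\nu_2$ are all independent of $x$, setting $\mathbf{c}_0$ equal to the maximum of the absolute values of these two thresholds establishes \eqref{eq.estimxix}.
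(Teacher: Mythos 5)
Your proof is correct. The existence and uniqueness part coincides with the paper's argument: you introduce the same auxiliary function, prove continuity by dominated convergence and strict monotonicity from the positivity of $1/A_x$, and use the same two-sided estimate together with Remark \ref{rem.Axprop} to force the limits $\pm\infty$, so the intermediate value theorem applies. Where you genuinely diverge is in the proof of the uniform bound \eqref{eq.estimxix}. The paper applies the Mean Value Theorem for integrals to the defining identity \eqref{eq.defixix}, which produces a point $t^*\in I$ with
$$\xi_x+\mathcal{F}_x(t^*)=\Phi\Big((\nu_2-\nu_1)\cdot\Big(\int_0^T \frac{1}{A_x(t)}\,\d t\Big)^{-1}\Big),$$
and since the argument of $\Phi$ lies in the fixed interval $[-\rho,\rho]$ with $\rho=|\nu_2-\nu_1|\,\big(\int_0^T 1/h_2\big)^{-1}$, the bound $|\xi_x|\le\sup_{[-\rho,\rho]}|\Phi|+\|\psi\|_{L^1}$ follows with no case distinction. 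You instead evaluate the sandwich inequality at $\xi=\xi_x$ and split according to the sign of $\Phi^{-1}(\xi_x\mp P)$ before multiplying by the uniform lower bound $m_2$ of $\int_0^T 1/A_x$; this is more laborious but equally valid, and your sign bookkeeping is right --- in particular you correctly take $\Phi(0)$ rather than $0$ as the pivot, since (H1) does not assume $\Phi(0)=0$. In short, the MVT converts the inequality juggling into a single exact identity, while your version is more elementary (nothing beyond monotonicity) and yields an essentially equivalent, in fact marginally sharper, constant $\mathbf{c}_0$.
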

  \begin{proof}
   Let $x\in W^{1,p}(I)$ be fixed and let
   $$f_x:\R\longto\R, \qquad
   f_x(\xi) := \int_0^T\frac{1}{A_x(t)}\,\Phi^{-1}\big(\xi + \mathcal{F}_x(t)\big)\,\d t .$$   
   Since
   $\mathcal{F}_x$ is continuous on $I$
   (see Remark \ref{rem.intFcont}) and since,
   by assumptions, $\Phi$ is con\-ti\-nuo\-us on the whole of $\R$, an application
   of Lebesgue's Dominated Convergence Theorem shows that $f_x\in C(\R,\R)$
   (see also Remark \ref{rem.Axprop});
   mo\-re\-o\-ver, since $\Phi$ is increasing, the same is true of $f_x$
   and, by \eqref{eq.estimpsiintF}, we have
   \begin{equation} \label{eq.touseestimxix}
    \begin{split}
    & \Phi^{-1}(\xi-\|\psi\|_{L^1})\cdot\bigg(\int_0^T\frac{1}{A_x(t)}\,\d t\bigg)
  \leq f_x(\xi) \leq \\
	& \qquad\quad 
	\leq \Phi^{-1}(\xi+\|\psi\|_{L^1})\cdot\bigg(\int_0^T\frac{1}{A_x(t)}\,\d t\bigg).
   \end{split}
   \end{equation}
   From this, we deduce that $f_x(\xi)\to\pm\infty$ as $\xi\to\pm\infty$; thus,
   by Bolzano's Theorem (and the monotonicity
   of $f_x$), there exists a unique $\xi_x\in\R$ s.t.
   $$f_x(\xi_x) = 
	\int_0^T\frac{1}{A_x(t)}\,\Phi^{-1}\big(\xi_x + \mathcal{F}_x(t)\big)\,\d t
	= 
	\nu_2-\nu_1.$$
	We now turn to prove estimate \eqref{eq.estimxix}. To this end we observe that,
	by \eqref{eq.defixix}
	and the Mean Value Theorem, there exists $t^* = t^*_x\in I$ such that
	$$\Phi^{-1}(\xi_x + \mathcal{F}_x(t^*))\cdot\bigg(\int_0^T\frac{1}{A_x(t)}\,\d t\bigg)
	= \nu_2-\nu_1;$$
	as a consequence, we obtain
	$$\xi_x + \mathcal{F}_x(t^*) = \Phi\bigg((\nu_2-\nu_1)\cdot
	\bigg(\int_0^T\frac{1}{A_x(t)}\,\d t\bigg)^{-1}\bigg).$$
    Now, by crucially exploiting Remark \ref{rem.Axprop}, we see that
	$$\bigg|(\nu_2-\nu_1)\cdot
	\bigg(\int_0^T\frac{1}{A_x(t)}\,\d t\bigg)^{-1}\bigg|
	\leq |\nu_2-\nu_1|\cdot \bigg(\int_0^T\frac{1}{h_2(t)}\,\d t\bigg)^{-1} =: \rho,$$
	\emph{for every $x\in W^{1,p}(I)$}; setting
	$M := \sup_{[-\rho,\rho]}|\Phi|$, we get (see also \eqref{eq.estimpsiintF})
	\begin{equation*}
	\begin{split}
	 |\xi_x| & \leq |\xi_x+\mathcal{F}_x(t^*)|+|\mathcal{F}_x(t^*)| \\[0.2cm]
	 & \leq \bigg|\Phi\bigg((\nu_2-\nu_1)\cdot
	\bigg(\int_0^T\frac{1}{A_x(t)}\,\d t\bigg)^{-1}\bigg)\bigg| + 
	\sup_{t\in I}|\mathcal{F}_x(t)| \\[0.2cm]
	 & \leq M + \|\psi\|_{L^1} =: \mathbf{c}_0.
	\end{split}
	\end{equation*}
	Since $\mathbf{c}_0 > 0$ does not depend on $x$, this gives
	the desired \eqref{eq.estimxix}.  
   \end{proof}
   We now consider the operator $\mathcal{P}:W^{1,p}(I)\to W^{1,p}(I)$ defined by
   \begin{equation} \label{defi.operatorT}
    \mathcal{P}_{x}(t) := \nu_1 + \int_0^t \frac{1}{A_x(s)}\,\Phi^{-1}\big(\xi_x+\mathcal{F}_x(s)\big)\,\d s,
   \end{equation}
   where $\xi_x$ is as in Lemma \ref{lem.constant}.
   We note that $\mathcal{P}$ is well-defined, in the sense
   that $\mathcal{P}_{x}\in W^{1,p}(I)$ for every $x\in W^{1,p}(I)$: 
   indeed, assumption $(\mathrm{H2})_2$ and \eqref{eq.estimpsiintF}
   give
   $$\bigg|\frac{1}{A_x(s)}\,\Phi^{-1}\big(\xi_x+\mathcal{F}_x(s)\big)\bigg|
   \leq \frac{1}{h_1(t)}\,\Phi^{-1}\big(\xi_x+\|\psi\|_{L^1}\big);$$
   thus, since $1/h_1\in L^p(I)$, we conclude that $\mathcal{P}_{x}\in W^{1,p}(I)$, as claimed.
   Fur\-ther\-mo\-re, it is not difficult to see that
    the solutions of \eqref{eq.mainBVPAbstract}
    (ac\-cor\-ding to Definition \ref{def.solutionBVP}) are precisely
    \textbf{the fixed points (in $W^{1,p}(I)$) of $\mathcal{P}$}.
    \medskip
    
    In view of this fact, we can prove Theorem \ref{thm.mainabstract} by showing that
    $\mathcal{P}$ possesses at least one fixed point in $W^{1,p}(I)$; in its turn, the existence
    of a fixed point of $\mathcal{P}$ follows from Schauder's Fixed Point Theorem if
    we are able to demonstrate that $\mathcal{P}$ enjoys the following properties: \medskip
    
     $\bullet$\,\,$\mathcal{P}$ is bounded in $W^{1,p}(I)$; \medskip
     
     $\bullet$\,\,$\mathcal{P}$ is continuous from $W^{1,p}(I)$ into itself; \medskip
     
     $\bullet$\,\,$\mathcal{P}$ is \emph{compact}. \medskip
     
     These facts are proved in the next lemmas.
    \begin{lem} \label{lem.Tbounded}
     The operator $\mathcal{P}$ defined in \eqref{defi.operatorT} is \textbf{bounded} in $W^{1,p}(I)$,
     that is, there exists a universal constant $\mathbf{c}_1 > 0$ such that
     $$\|\mathcal{P}_{x}\|_{W^{1,p}} \leq \mathbf{c}_1 \quad
     \text{for every $x\in W^{1,p}(I)$}.$$
    \end{lem}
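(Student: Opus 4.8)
The plan is to bound $\|\mathcal{P}_x\|_{W^{1,p}} = \|\mathcal{P}_x\|_{L^p} + \|(\mathcal{P}_x)'\|_{L^p}$ uniformly in $x$, by treating the two summands separately. The key point is that every quantity entering the definition \eqref{defi.operatorT} of $\mathcal{P}_x$ has already been controlled uniformly in the previous results: by Lemma \ref{lem.constant} we have $|\xi_x| \leq \mathbf{c}_0$, by \eqref{eq.estimpsiintF} we have $\sup_{t\in I}|\mathcal{F}_x(t)| \leq \|\psi\|_{L^1}$, and by $(\mathrm{H2})_2$ together with Remark \ref{rem.Axprop} we have $1/A_x \leq 1/h_1$ with $1/h_1 \in L^p(I)$.

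First I would estimate the derivative. Since $(\mathcal{P}_x)'(t) = \tfrac{1}{A_x(t)}\,\Phi^{-1}\big(\xi_x + \mathcal{F}_x(t)\big)$ for a.e.\ $t\in I$, and since the argument of $\Phi^{-1}$ satisfies $|\xi_x + \mathcal{F}_x(t)| \leq \mathbf{c}_0 + \|\psi\|_{L^1}$ uniformly in $x$ and $t$, the monotonicity of $\Phi^{-1}$ gives a \emph{uniform} pointwise bound
$$\big|\Phi^{-1}(\xi_x + \mathcal{F}_x(t))\big| \leq \max\Big\{\big|\Phi^{-1}(\mathbf{c}_0 + \|\psi\|_{L^1})\big|,\ \big|\Phi^{-1}(-\mathbf{c}_0 - \|\psi\|_{L^1})\big|\Big\} =: K.$$
Hence $|(\mathcal{P}_x)'(t)| \leq K/h_1(t)$ for a.e.\ $t$, and taking $L^p$-norms yields $\|(\mathcal{P}_x)'\|_{L^p} \leq K\,\|1/h_1\|_{L^p}$, a constant independent of $x$.

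Next I would estimate the function itself. From \eqref{defi.operatorT} we have $\mathcal{P}_x(t) = \nu_1 + \int_0^t (\mathcal{P}_x)'(s)\,\d s$, so the pointwise bound on the derivative together with Hölder's inequality (or simply integrating the bound $K/h_1$ over $[0,T]$) gives
$$\sup_{t\in I}|\mathcal{P}_x(t)| \leq |\nu_1| + K\int_0^T \frac{1}{h_1(s)}\,\d s \leq |\nu_1| + K\,T^{1/p'}\,\|1/h_1\|_{L^p},$$
where $p'$ is the conjugate exponent; consequently $\|\mathcal{P}_x\|_{L^p} \leq T^{1/p}\,\sup_{t\in I}|\mathcal{P}_x(t)|$ is again bounded uniformly in $x$. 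Adding the two estimates produces the desired universal constant $\mathbf{c}_1$. The proof is essentially routine bookkeeping rather than involving any genuine obstacle; the only point demanding a little care is the \emph{uniformity} of the bound $K$ on $\Phi^{-1}$, which hinges precisely on the universal estimate $|\xi_x|\leq \mathbf{c}_0$ supplied by Lemma \ref{lem.constant}—without it the argument of $\Phi^{-1}$ could grow with $x$ and the whole estimate would collapse.
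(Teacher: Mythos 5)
Your proof is correct and follows essentially the same route as the paper's: the uniform bounds $|\xi_x|\leq \mathbf{c}_0$ and $\sup_I|\mathcal{F}_x|\leq\|\psi\|_{L^1}$ give a uniform constant bounding $\Phi^{-1}(\xi_x+\mathcal{F}_x(\cdot))$, which combined with $1/A_x\leq 1/h_1\in L^p(I)$ yields the $L^p$ bounds on $\mathcal{P}_x'$ and then on $\mathcal{P}_x$ itself. The only cosmetic difference is your use of H\"older's inequality to pass from $\|1/h_1\|_{L^1}$ to $T^{1/p'}\|1/h_1\|_{L^p}$, which the paper does not bother with.
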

    \begin{proof}
     For every $x\in W^{1,p}(I)$, by combining \eqref{eq.estimpsiintF} and
     \eqref{eq.estimxix}, we have 
     $$|\xi_x + \mathcal{F}_x(s)| \leq \mathbf{c}_0 + \|\psi\|_{L^1} =:\eta,
     \quad \text{for every $s\in I$};$$
     thus, if we set $\widehat{M} = \max_{[-\eta,\eta]}|\Phi^{-1}|$, we obtain
     (see assumption $(\mathrm{H2})_2$)
     \begin{equation} \label{eq.estimTbound}
      \bigg|\frac{1}{A_x(s)}\,\Phi^{-1}\big(\xi_x+\mathcal{F}_x(s)\big)\bigg|
 	 \leq \frac{M}{h_1(s)} \quad \text{for every $s\in I$},     
 	 \end{equation}
 	 and the estimate holds \emph{for every $x\in W^{1,p}(I)$}.
	With such an estimate at hand, we can easily prove the boundedness of $\mathcal{P}$:
	indeed, by \eqref{eq.estimTbound} we have
	\begin{align*}
	 \|\mathcal{P}_{x}'\|_{L^p} & =
	 \bigg(\int_0^T\bigg|\frac{1}{A_x(s)}\,\Phi^{-1}\big(\xi_x+\mathcal{F}_x(s)\big)\bigg|^p\,\d s
	 \bigg)^{1/p} \leq M\,\|1/h_1\|_{L^p}
	 \end{align*}
	 for every $x\in W^{1,p}(I)$; moreover, one has
	 \begin{align*}
	  \|\mathcal{P}_{x}\|_{L^p} & \leq
	 \bigg\{\int_0^T\bigg(|\nu_1|
	 +\int_0^t\bigg|\frac{1}{A_x(s)}\,\Phi^{-1}\big(\xi_x+\mathcal{F}_x(s)\big)
	 \bigg|\,\d s\bigg)^p\,\d t\bigg\}^{1/p} \\[0.25cm]
	 & \leq T^{1/p}\,\big(|\nu_1|+ M\,\|1/h_1\|_{L^1}\big),
	\end{align*}
	and again the estimate holds for every $x\in W^{1,p}(I)$. Summing up, if we in\-tro\-duce
	the constant (which does not depend on  $x$)
	$$\mathbf{c}_1 := 	T^{1/p}\,(|\nu_1|+ M\,\|1/h_1\|_{L^1})
	+ M\,\|1/h_1\|_{L^p} > 0,$$
	we conclude that, for every $x\in W^{1,p}(I)$, one has
	\begin{align*}
	& \|\mathcal{P}_{x}\|_{W^{1,p}}  = \|\mathcal{P}_{x}\|_{L^p}
	+ \|\mathcal{P}_{x}'\|_{L^p} \leq \mathbf{c}_1.
	\end{align*}
	This ends the proof.  
    \end{proof}
    \begin{rem} \label{rem.estimlebesgue}
     It is contained in the proof of Lemma \ref{lem.Tbounded} the following fact, which we shall repeatedly
     use in the sequel: there exists a constant $M > 0$ such that,
     for every $x\in W^{1,p}(I)$,
     \begin{equation} \label{eq.estimLebesguemain}
      \max_{t\in I}\Big|\Phi^{-1}\big(\xi_x+\mathcal{F}_x(t)\big)\Big| \leq M.
     \end{equation}
     We also highlight that, since the injection $W^{1,p}(I)\subseteq C(I,\R)$ is continuous, the boundedness
     of $\mathcal{P}$ in $W^{1,p}(I)$ implies the boundedness of 
     $\mathcal{P}$ in $C(I,\R)$: more precisely,
     there exists a real $\mathbf{c}'_1 > 0$ such that
     \begin{equation} \label{eq.TboundedCI}
      \sup_{t\in I}|\mathcal{P}_{x}(t)| \leq \mathbf{c}'_1, \quad
      \text{for every $x\in W^{1,p}(I)$}.
     \end{equation}
    \end{rem}
    We now turn to prove the continuity of $\mathcal{P}$.
    \begin{lem} \label{lem.Tcontinuous}
     The operator $\mathcal{P}$ defined in \eqref{defi.operatorT} is \textbf{continuous}
     on $W^{1,p}(I)$.
    \end{lem}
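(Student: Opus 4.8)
The plan is to prove \emph{sequential} continuity: fix $x\in W^{1,p}(I)$ and a sequence $x_n\to x$ in $W^{1,p}(I)$, and show that $\|\mathcal{P}_{x_n}-\mathcal{P}_x\|_{W^{1,p}}\to 0$. First I would record the convergences that follow at once from the hypotheses. Since the embedding $W^{1,p}(I)\subseteq C(I,\R)$ is continuous, $x_n\to x$ uniformly on $I$; hence assumption (H2) gives $A_{x_n}\to A_x$ uniformly on $I$, while the continuity of $\mathcal{F}$ recorded in Remark \ref{rem.intFcont} gives $\mathcal{F}_{x_n}\to\mathcal{F}_x$ uniformly on $I$. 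Writing $g_n(s):=\frac{1}{A_{x_n}(s)}\,\Phi^{-1}(\xi_{x_n}+\mathcal{F}_{x_n}(s))$ and $g(s):=\frac{1}{A_x(s)}\,\Phi^{-1}(\xi_x+\mathcal{F}_x(s))$, we have $\mathcal{P}'_{x_n}=g_n$ and $\mathcal{P}'_x=g$ by \eqref{defi.operatorT}, so the goal splits into proving $g_n\to g$ in $L^p(I)$ and $\mathcal{P}_{x_n}\to\mathcal{P}_x$ in $L^p(I)$.

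The key step, which I expect to be the main obstacle, is to show that the scalars converge, $\xi_{x_n}\to\xi_x$. I would argue by subsequences. By Lemma \ref{lem.constant} the sequence $(\xi_{x_n})$ is bounded, $|\xi_{x_n}|\leq\mathbf{c}_0$, so any subsequence admits a further subsequence with $\xi_{x_{n_k}}\to\bar\xi$ for some $\bar\xi\in[-\mathbf{c}_0,\mathbf{c}_0]$. Along it the integrand $\frac{1}{A_{x_{n_k}}(t)}\,\Phi^{-1}(\xi_{x_{n_k}}+\mathcal{F}_{x_{n_k}}(t))$ converges for a.e.\ $t$ (namely wherever $h_1(t)>0$, which holds a.e.\ since $1/h_1\in L^p$) to $\frac{1}{A_x(t)}\,\Phi^{-1}(\bar\xi+\mathcal{F}_x(t))$, using the uniform convergences above and the continuity of $\Phi^{-1}$ from (H1); moreover, by the uniform bound \eqref{eq.estimLebesguemain} of Remark \ref{rem.estimlebesgue} and assumption $(\mathrm{H2})_2$, these integrands are dominated by $M/h_1\in L^1(I)$. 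Lebesgue's Dominated Convergence Theorem then lets me pass to the limit in the defining identity \eqref{eq.defixix}, obtaining $\int_0^T\frac{1}{A_x(t)}\,\Phi^{-1}(\bar\xi+\mathcal{F}_x(t))\,\d t=\nu_2-\nu_1$. By the \emph{uniqueness} assertion of Lemma \ref{lem.constant} this forces $\bar\xi=\xi_x$. Since every subsequence of $(\xi_{x_n})$ has a sub-subsequence with limit $\xi_x$, the whole sequence converges to $\xi_x$.

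With $\xi_{x_n}\to\xi_x$ in hand, the remaining steps are routine applications of dominated convergence. The pointwise a.e.\ convergence $g_n\to g$ follows exactly as above, and the uniform estimate \eqref{eq.estimLebesguemain} together with $(\mathrm{H2})_2$ yields $|g_n(s)|^p,\,|g(s)|^p\leq M^p/h_1(s)^p$, with $(1/h_1)^p\in L^1(I)$ because $1/h_1\in L^p(I)$; hence $|g_n-g|^p$ is dominated by an integrable function and $g_n\to g$ in $L^p(I)$, i.e.\ $\mathcal{P}'_{x_n}\to\mathcal{P}'_x$ in $L^p(I)$. Finally, for the functions themselves I would use $\mathcal{P}_{x_n}(t)-\mathcal{P}_x(t)=\int_0^t(g_n-g)\,\d s$, so that by Hölder's inequality $\sup_{t\in I}|\mathcal{P}_{x_n}(t)-\mathcal{P}_x(t)|\leq\|g_n-g\|_{L^1}\leq T^{1-1/p}\,\|g_n-g\|_{L^p}\to 0$, whence also $\|\mathcal{P}_{x_n}-\mathcal{P}_x\|_{L^p}\leq T^{1/p}\sup_{t\in I}|\mathcal{P}_{x_n}(t)-\mathcal{P}_x(t)|\to 0$. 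Combining the two $L^p$ convergences gives $\|\mathcal{P}_{x_n}-\mathcal{P}_x\|_{W^{1,p}}\to 0$, which is the desired continuity of $\mathcal{P}$.
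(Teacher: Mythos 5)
Your proof is correct and follows essentially the same route as the paper's: boundedness of the constants $\xi_{x_n}$, extraction of a convergent subsequence, dominated convergence to pass to the limit in the identity \eqref{eq.defixix}, identification of the limit via the uniqueness assertion of Lemma \ref{lem.constant}, and then dominated convergence in $L^p$ for the derivatives. The only (harmless) organizational differences are that you apply the subsequence principle to the scalars $\xi_{x_n}$ alone and then argue along the full sequence, whereas the paper runs the sub-subsequence argument on $\mathcal{P}_{x_n}$ itself, and that you deduce the $L^p$-convergence of $\mathcal{P}_{x_n}$ from that of $\mathcal{P}'_{x_n}$ via H\"older's inequality instead of the paper's second application of dominated convergence with the $C(I,\R)$-bound \eqref{eq.TboundedCI} --- a slight streamlining.
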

    \begin{proof}
     Let $x_0\in W^{1,p}(I)$ be fixed and let $\{x_n\}_{n\in\N}\subseteq W^{1,p}(I)$ be a sequence
     converging to $x_0$ as $n\to\infty$. We need to prove that $\mathcal{P}_{x_n}
     \to \mathcal{P}_{x_0}$ as $n\to\infty$.
     
     To this end, we arbitrarily choose a sub-sequence $\{x_{n_k}\}_{k\in\N}$ of $\{x_n\}_{n\in\N}$
     and we show that there exists a further sub-sequence $\{x_{n_{k_j}}\}_{j\in\N}$ such that
     $$\lim_{j\to\infty}\mathcal{P}_{x_{n_{k_j}}} = \mathcal{P}_{x_0} \quad
     \text{in $W^{1,p}(I)$}.$$
     First of all, by \eqref{eq.estimxix}, the sequence
     $\{\xi_{x_{n_k}}\}_{k\in\N}$ is bounded in $\R$; thus, there exist
     an increasing sequence $\{k_j\}_{j\in\N}\subseteq\N$ and a real $\xi_0\in\R$ such that
     $$\xi_j := \xi_{x_{n_{k_j}}} \to \xi_0 \quad \text{as $j\to\infty$}.$$
     Moreover, since $\mathcal{F}$ is continuous from $W^{1,p}(I)$ to $C(I,\R)$
     (see Remark \ref{rem.intFcont}), we have
     $\mathcal{F}_j := \mathcal{F}_{x_{n_{k_j}}}\to \mathcal{F}_{x_0}$ uniformly on $I$ as $j\to\infty$.
     Finally, since $A$ is continuous \emph{wrt the uniform topology}
     of $C(I,\R)$ (by (H2)), one has 
     $$\text{$A_j := A_{x_{n_{k_j}}}\to A_{x_0}$ uniformly on $I$ as $j\to\infty$}.$$
     Gathering together all these facts (and reminding that $\Phi\in C(\R,\R)$),
     we get
     \begin{equation} \label{touseLebesguederT}
	  \begin{split}      
      & \lim_{j\to\infty}\frac{1}{A_j(t)}\,
     \Phi^{-1}\big(\xi_j+\mathcal{F}_j(t)\big) \\
    &\qquad = \frac{1}{A_{x_0}(t)}\,\Phi^{-1}\big(\xi_0+\mathcal{F}_{x_0}(t)\big)
     \quad \text{for a.e.\,$t\in I$}.
     \end{split}
     \end{equation}
    From this, owing to estimate \eqref{eq.estimLebesguemain} 
    and Remark \ref{rem.Axprop}, we infer that
    \begin{equation}  \label{eq.intjconvint_one}
     \begin{split}
    & \lim_{j\to\infty}\int_0^t
    \frac{1}{A_j(s)}\,
     \Phi^{-1}\big(\xi_j+\mathcal{F}_j(s)\big)\,\d s 
     \\
     & \qquad\quad= \int_0^t
     \frac{1}{A_{x_0}(s)}\,\Phi^{-1}\big(\xi_0+\mathcal{F}_{x_0}(s)\big)\,\d s
     \quad \text{for every $t\in I$}.
     \end{split}
    \end{equation}
    In particular, since we know from Lemma \ref{lem.constant} that
    $$\int_0^T\frac{1}{A_j(s)}\,
     \Phi^{-1}\big(\xi_j+\mathcal{F}_j(s)\big)\,\d s = \nu_2-\nu_1 \quad
     \text{for every $j\in\N$},$$
     identity \eqref{eq.intjconvint_one} implies that
     $$\int_0^T\frac{1}{A_{x_0}(s)}\,\Phi^{-1}\big(\xi_0+\mathcal{F}_{x_0}(s)\big)\,\d s
     = \nu_2-\nu_1;$$
     thus, by the uniqueness property of $\xi_x$ in Lemma \ref{lem.constant}, we get $\xi_0 = \xi_{x_0}$.
     As a consequence, by exploiting the very definition of $\mathcal{P}$
     (see \eqref{defi.operatorT}), identity \eqref{eq.intjconvint_one} allows us
     to conclude that $\mathcal{P}_{x_{n_{k_j}}}\to 
     \mathcal{P}_{x_0}$ point-wise on $I$ as $j\to\infty$. \medskip
     
     To complete the proof of the lemma, we need to 
     show that the sequence
     $\mathcal{P}_{x_{n_{k_j}}}$
     actually converges to $\mathcal{P}_{x_0}$ in $W^{1,p}(I)$ as $j\to\infty$.
     To this end we first observe that, by ex\-ploi\-ting estimate \eqref{eq.estimLebesguemain}, 
     for almost every $t\in I$ one has	
     \begin{equation*} 
     \begin{split}
      & \bigg|
      \frac{1}{A_j(t)}\,
     \Phi^{-1}\big(\xi_j+\mathcal{F}_j(t)\big)
     - \frac{1}{A_{x_0}(t)}\,\Phi^{-1}\big(\xi_{x_0}+\mathcal{F}_{x_0}(t)\big)\bigg|^p 
     \leq 2^p\,\frac{M^p}{h_1^p(t)};
     \end{split}
     \end{equation*}
 	 as a consequence, since
     $1/h_1\in L^p(I)$ (by assumption (H2)), a standard ap\-pli\-ca\-tion of
     Lebesgue's Dominated Convergence Theorem gives (see also \eqref{touseLebesguederT})
     \begin{equation*}
      \begin{split}
       & \lim_{j\to\infty}\|\mathcal{P}_{x_{n_{k_j}}}'-\mathcal{P}_{x_0}'\|_{L^p}^p \\
     & \quad = \lim_{j\to\infty}\int_0^T
     \bigg|
      \frac{1}{A_j(t)}\,
     \Phi^{-1}\big(\xi_j+\mathcal{F}_j(t)\big)
     - \frac{1}{A_{x_0}(t)}\,\Phi^{-1}\big(\xi_{x_0}+\mathcal{F}_{x_0}(t)\big)\bigg|^p\d t  = 0.
     \end{split}
     \end{equation*}
     On the other hand, since $\mathcal{P}$ is bounded in $C(I,\R)$ 
     (see Remark \ref{rem.estimlebesgue}), one has
     $$\text{$|\mathcal{P}_{x_{n_{k_j}}}(t)-
     \mathcal{P}_{x_0}(t)|^p \leq 2^{p}\,\mathbf{c}'_1$ for every $t\in I$};$$
     thus, again by
     Lebesgue's Dominated Convergence Theorem, we get
     \begin{equation*}
     \begin{split}
     & \lim_{j\to\infty}\|\mathcal{P}_{x_{n_{k_j}}}-\mathcal{P}_{x_0}\|^p_{L^p} \\
     & \qquad = \lim_{j\to\infty}\int_0^T|\mathcal{P}_{x_{n_{k_j}}}(t)-
     \mathcal{P}_{x_0}(t)|^p\,\d t  = 0.
     \end{split}
     \end{equation*}
     Gathering together these facts, we conclude that 
     $\|\mathcal{P}_{x_{n_{k_j}}}-
     \mathcal{P}_{x_0}\|_{W^{1,p}}\to 0$ as $j\to\infty$, and this finally
     completes the demonstration of the lemma.  
    \end{proof}
    Finally, we prove that $\mathcal{P}$ is compact.
    \begin{lem} \label{lem.Tcompact}
    The operator $\mathcal{P}$ defined in \eqref{defi.operatorT} is \textbf{compact}
     on $W^{1,p}(I)$.
    \end{lem}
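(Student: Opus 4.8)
The plan is to verify the defining property of a compact operator: I would fix an arbitrary \emph{bounded} sequence $\{x_n\}_{n\in\N}\subseteq W^{1,p}(I)$ and show that $\{\mathcal{P}_{x_n}\}_{n}$ possesses a subsequence converging in $W^{1,p}(I)$. Writing $g_n(t):=\mathcal{P}_{x_n}'(t)=\frac{1}{A_{x_n}(t)}\,\Phi^{-1}\big(\xi_{x_n}+\mathcal{F}_{x_n}(t)\big)$, the starting point is the uniform domination already isolated in Remark \ref{rem.estimlebesgue} together with $(\mathrm{H2})_2$, namely $|g_n(t)|\le M/h_1(t)$ for a.e.\ $t\in I$ and every $n$, where $M/h_1\in L^p(I)$. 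Since mere domination by a fixed $L^p$ function is \emph{not} enough to force $L^p$-compactness of $\{g_n\}$, the real content is to extract genuine a.e.\ convergence of the three ingredients $\xi_{x_n}$, $\mathcal{F}_{x_n}$ and $A_{x_n}$, after which the domination upgrades pointwise convergence to $L^p$ convergence of the derivatives.

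Accordingly, I would pass to a subsequence (not relabelled) along which all three ingredients converge. First, by the uniform bound \eqref{eq.estimxix} of Lemma \ref{lem.constant} the scalars $\{\xi_{x_n}\}$ are bounded, so $\xi_{x_n}\to\xi_0$ for some $\xi_0\in\R$. Next, the family $\{\mathcal{F}_{x_n}\}$ is equibounded by $\|\psi\|_{L^1}$ (see \eqref{eq.estimpsiintF}) and equicontinuous, since $|\mathcal{F}_{x}(t_2)-\mathcal{F}_{x}(t_1)|\le\int_{t_1}^{t_2}\psi(s)\,\d s$ is small uniformly in $x$ by absolute continuity of the integral of $\psi\in L^1(I)$; hence Ascoli--Arzel\`a yields $\mathcal{F}_{x_n}\to\Psi_0$ uniformly on $I$ for some $\Psi_0\in C(I,\R)$. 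Finally, for the term $A_{x_n}$ I would use that $\{x_n\}$, being bounded in $W^{1,p}(I)$ with $p>1$, is relatively compact in $C(I,\R)$ (compact embedding), while its weak limit lies in $W^{1,p}(I)$ by reflexivity; thus $x_n\to\bar x$ uniformly with $\bar x\in W^{1,p}(I)$, and the continuity of $A$ \emph{wrt the uniform topology} assumed in (H2) gives $A_{x_n}\to A_{\bar x}$ uniformly on $I$. It is worth stressing that I do \emph{not} need to identify $\xi_0$ or $\Psi_0$ with $\xi_{\bar x}$ or $\mathcal{F}_{\bar x}$: for compactness it suffices that the subsequence converge to \emph{some} limit, the identification being the separate content of Lemma \ref{lem.Tcontinuous}.

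With these convergences in hand the conclusion is routine. Since $1/h_1\in L^p(I)$ forces $h_1>0$ a.e., we have $A_{\bar x}\ge h_1>0$ a.e., so the continuity of $\Phi^{-1}$ gives the a.e.\ pointwise limit $g_n(t)\to g_0(t):=\frac{1}{A_{\bar x}(t)}\,\Phi^{-1}\big(\xi_0+\Psi_0(t)\big)$. As $|g_n-g_0|^p\le 2^p\,M^p/h_1^p\in L^1(I)$, Lebesgue's Dominated Convergence Theorem yields $g_n\to g_0$ in $L^p(I)$; integrating, $\mathcal{P}_{x_n}(t)=\nu_1+\int_0^t g_n\to\nu_1+\int_0^t g_0=:w_0(t)$ uniformly (hence in $L^p$), with $w_0\in W^{1,p}(I)$ and $w_0'=g_0$. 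Combining the $L^p$-convergence of the functions and of their derivatives gives $\mathcal{P}_{x_n}\to w_0$ in $W^{1,p}(I)$, which proves compactness. The main obstacle is precisely the handling of $A_{x_n}$: unlike $\xi_{x_n}$ and $\mathcal{F}_{x_n}$, for which a priori bounds and Ascoli suffice, controlling $A_{x_n}$ requires coupling the compact embedding $W^{1,p}(I)\hookrightarrow C(I,\R)$ with reflexivity to place the uniform limit $\bar x$ back in $W^{1,p}(I)$, so that the uniform-topology continuity of $A$ in (H2) can be invoked; the domination by $M/h_1$ is then the device that converts these pointwise convergences into the desired $W^{1,p}$ convergence.
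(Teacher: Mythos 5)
Your proof is correct, and it follows the same overall skeleton as the paper's: extract subsequences along which $\xi_{x_n}$, $\mathcal{F}_{x_n}$ and $A_{x_n}$ all converge, pass to the a.e.\ pointwise limit in $\mathcal{P}_{x_n}'$, and upgrade to $W^{1,p}$-convergence via the domination $|\mathcal{P}_{x_n}'|\leq M/h_1\in L^p(I)$ and Lebesgue's theorem. The one genuine divergence is the treatment of the $F$-ingredient. The paper works at the level of the integrands $F_{x_n}$: it observes that $|F_{x_n}|\leq\psi$ makes the family bounded and equi-integrable in $L^1(I)$, invokes the Dunford--Pettis Theorem to get $F_{x_n}\rightharpoonup g$ weakly in $L^1$, and tests against indicators of $[0,t]$ to obtain pointwise convergence $\mathcal{F}_{x_n}(t)\to\int_0^t g(s)\,\d s$. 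You instead work directly with the primitives $\mathcal{F}_{x_n}$, noting they are equibounded by $\|\psi\|_{L^1}$ and equicontinuous (by absolute continuity of the integral of $\psi$), so Ascoli--Arzel\`a gives uniform convergence to some $\Psi_0\in C(I,\R)$. Your route is more elementary --- it avoids weak $L^1$ compactness entirely and even yields uniform rather than merely pointwise convergence of $\mathcal{F}_{x_n}$; what the paper's route buys is the explicit representation of the limit as $\int_0^t g$ with $g\in L^1$ and $\|g\|_{L^1}\leq\|\psi\|_{L^1}$, which is not needed here (as you correctly observe, compactness requires convergence to \emph{some} limit, not identification of that limit with $\mathcal{P}$ of anything). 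Both arguments are sound; your remark that the limits $\xi_0$, $\Psi_0$ need not be identified with $\xi_{\bar x}$, $\mathcal{F}_{\bar x}$ --- that being the business of Lemma \ref{lem.Tcontinuous} --- matches the paper, which likewise names the limit $y_0$ without identifying it.
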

    \begin{proof}
     Let $\{x_n\}_{n\in\N}\subseteq W^{1,p}(I)$ be bounded. We need to prove that the sequence
     $\{\mathcal{P}_{x_n}\}_{n\in\N}$ possesses a sub-sequence which is convergent
     (in the $W^{1,p}$-norm)
     to some function $y_0\in W^{1,p}(I)$.
     
	Fist of all, since $\{\xi_{x_n}\}_{n\in\R}$ is bounded in $\R$
	(see \eqref{eq.estimxix}), there exist
	a real $\xi_0$ and a sub-sequence of $\{x_n\}_{n\in\N}$, denoted again
	by $\{x_n\}_{n\in\N}$, such that
	\begin{equation} \label{eq.limxin}
	 \lim_{n\to\infty}\xi_{x_n} = \xi_0 \qquad \text{and}
	\qquad |\xi_0|\leq \mathbf{c}_0.
	\end{equation}
	Moreover, since $\{x_n\}_{n\in\N}$ is \emph{bounded}	
	in $W^{1,p}(I)$ and $p > 1$, there exist a suitable function
	$x_0\in W^{1,p}(I)$ and another sub-sequence of $\{x_n\}_{n\in\N}$,
	which we still denote by $\{x_n\}_{n \in \N}$, such that
	$x_n \to x_0$ uniformly on $I$ as $n\to\infty$.
	
	As a consequence, since the operator $A$ is continuous with respect to
	the uniform
	topology of $C(I,\R)$ (by assumption (H2)),
	we have 
	\begin{equation} \label{eq.limAxn}
	\text{$A_{x_n}\to A_{x_0}$ uniformly on $I$ as $j\to\infty$}.
	\end{equation}
	We now observe that, by assumption (H3), we have the estimate
	$$\text{$F_{x_n}(t)\leq \psi(t)$, holding true for a.e.\,$t\in I$ and every
	$n\in\N$};$$
	thus, $\{F_{x_n}\}_{n\in\N}$ is \emph{bounded and equi-integrable}
	in $L^1$. Owing to the Dunford-Pettis Theorem, we infer the existence
	of a function $g\in L^1(I)$ and of 
	another sub-sequence of $\{x_n\}_{n\in\N}$, denoted once again
	by $\{x_n\}_{n\in\N}$, such that \medskip
	
	$(\star)$\,\,$\dsy \lim_{n\to\infty}\int_0^T F_{x_n}(s)\,v(s)\,d s = \int_0^T g(s)\,v(s)\,\d s
	\quad \text{for every $v\in L^\infty(I)$}$; \medskip
	
	$(\star)$\,\,$\|g\|_{L^1} \leq \|\psi\|_{L^1}.$ \medskip
	
	\noindent	Choosing $v$ as the indicator function
	of $[0,t]$ (with $t\in I$), we get
	\begin{equation} \label{eq.limitFxn}
	\begin{split}
	 & \mathcal{F}_{x_n}(t) \to \mathcal{G}(t) := \int_0^t g(s)\,\d s \quad
	 \text{for every $t\in I$} \\[0.2cm]
	 & \qquad\qquad \text{and} \quad \sup_{t\in I}|\mathcal{G}(t)|\leq \|\psi\|_{L^1}.
	 \end{split}
	\end{equation}
	Gathering together \eqref{eq.limxin}, \eqref{eq.limAxn} and
	\eqref{eq.limitFxn}, we deduce that
	\begin{equation} \label{eq.limitderivatcompact}
	  \begin{split}      
      & \lim_{n\to\infty}\frac{1}{A_{x_n}(t)}\,
     \Phi^{-1}\big(\xi_{x_n} + \mathcal{F}_{x_n}(t)\big) \\
    &\qquad = \frac{1}{A_{x_0}(t)}\,\Phi^{-1}\big(\xi_0+\mathcal{G}(t)\big)
     \quad \text{for a.e.\,$t\in I$}.
     \end{split}
	\end{equation}
	From this, owing to
	\eqref{eq.limxin}, \eqref{eq.limitFxn} 
	and Remark \ref{rem.Axprop}, we conclude that
	\begin{equation} \label{eq.estimdertouse}
	 \bigg|\frac{1}{A_{x_0}(t)}\,\Phi^{-1}\big(\xi_0+\mathcal{G}(t)\big)\bigg|
	\leq \frac{M}{h_1(t)}\in L^p(I) \quad \text{for a.e.\,$t\in I$}
	\end{equation}
	and that, for every $t\in I$, one has
	\begin{align*}
	 & \lim_{n\to\infty}\mathcal{P}_{x_n}(t) = \lim_{n\to\infty}
	 \Big\{\nu_1+\int_0^t
	 \frac{1}{A(x_n)(s)}\,
     \Phi^{-1}\big(\xi_{x_n}+\mathcal{F}_{x_n}(s)\big)\Big\} \\[0.2cm]
     & \quad = \nu_1 + \int_0^t
     \frac{1}{A_{x_0}(s)}\,\Phi^{-1}\big(\xi_0+\mathcal{G}(s)\big) =:y_0(t) \quad
     \text{for every $t\in I$}.
	\end{align*}
	To complete the proof of the lemma, we need to show that the sequence
	$\{\mathcal{P}_{x_n}\}_{n}$ actually converges to 
	$y_0$ \emph{in} $W^{1,p}(I)$ as $n\to\infty$.
	
	On the one hand, by using estimate \eqref{eq.estimdertouse}
	and by
	arguing exactly as in the proof
	of Lemma \ref{lem.Tcontinuous}, we easily recognize that
	     \begin{equation*}
      \begin{split}
       & \lim_{n\to\infty}\|\mathcal{P}_{x_n}'-y_0'\|_{L^p}^p = \\[0.2cm]
     & \,\, = \lim_{n\to\infty}\int_0^T
     \bigg|
      \frac{1}{A_{x_n}(t)}\,
     \Phi^{-1}\big(\xi_{x_n}+\mathcal{F}_{x_n}(t)\big)
     - \frac{1}{A_{x_0}(t)}\,\Phi^{-1}\big(\xi_0+\mathcal{G}(t)\big)\bigg|^p\d t  = 0.
     \end{split}
     \end{equation*}
     On the other hand, since $\mathcal{P}_{x_n}\to y_0$ point-wise on $I$, from
     \eqref{eq.TboundedCI} we get 
     $$\text{$|y_0(t)|\leq \mathbf{c}'_1$ for every $t\in I$};$$
     hence, by arguing once again as in the proof of Lemma \ref{lem.Tcontinuous}, we conclude that
     \begin{equation*}
     \begin{split}
     & \lim_{n\to\infty}\|\mathcal{P}_{x_n}-y_0\|^p_{L^p} \\
     & \qquad = \lim_{n\to\infty}\int_0^T|\mathcal{P}_{x_n}(t)-y_0(t)|^p\,\d t  = 0.
     \end{split}
     \end{equation*}
	Summing up, $\mathcal{P}_{x_n}\to y_0$ in $W^{1,p}(I)$ as $n\to\infty$,
	and the proof is complete. 
	\end{proof}
	Gathering Lemmas \ref{lem.Tbounded}, \ref{lem.Tcontinuous} and
	\ref{lem.Tcompact}, we can prove Theorem \ref{thm.mainabstract}.
	\begin{proof} [of Theorem \ref{thm.mainabstract}]
	 We have already pointed out that a function $x\in W^{1,p}(I)$ is a solution
	 of the boundary value problem \eqref{eq.mainBVPAbstract}
	 \emph{if and only if} $x$ is a fixed point of the operator $\mathcal{P}$
	 defined in \eqref{defi.operatorT}. On the other hand, since $\mathcal{P}$ 
	 is bounded, continuous
	 and compact on the Banach space $W^{1,p}(I)$, the Schauder Fixed Point Theorem ensures
	 the existence of (at least) one $x\in W^{1,p}(I)$ such that 
	 $\mathcal{P}_{x} = x$, and thus
	 the problem \eqref{eq.mainBVPAbstract} possesses at least
	 one solution. 
	\end{proof}
\section{The Dirichlet problem for singular ODEs} \label{sec:concrete}
	In this section, we exploit the existence result
	in Theorem \ref{thm.mainabstract} in order to prove the solvability
	of boundary value problems of the following type
	\begin{equation} \label{eq.mainBVPconcrete}
	 \begin{cases}
   		\dsy\Big(\Phi\big(a(t,x(t))\,x'(t)\big)\Big)' = f(t,x(t),x'(t)), & \text{a.e.\,on $I$}, \\[0.2cm]
   		x(0) = \nu_1,\,\,x(T) = \nu_2.
	 \end{cases}
	\end{equation}
	As in Section \ref{sec:abstractset}, $I = [0,T]$ (for some real $T > 0$)
	and $\nu_1,\nu_2\in \R$; furthermore, the functions $\Phi, a$ and $f$ satisfy
	the following \emph{structural assumptions}:
	\begin{itemize}
	 \item[(A1)] $\Phi:\R\to\R$ is a \emph{strictly increasing homeomorphism}; \medskip
	 \item[(A2)] $a\in C(I\times\R,\R)$ and
	 there exists $h\in C(I,\R)$ such that \medskip
	 \begin{itemize}
	  \item[$(\mathrm{A2})_1$] $h \geq 0$ on $I$ and there exists $p > 1$
	  such that 
	  $$1/h\in L^p(I);$$
	  \item[$(\mathrm{A2})_2$] $a(t,x) \geq h(t)$ for every
	  $t\in I$ and every $x\in\R$; \medskip
	 \end{itemize}
	 \item[(A3)] $f:I\times\R^2\to\R$ is a \emph{Carath\'{e}odory function},
	 that is, \medskip
	 
	 \noindent $(*)$\,\,the map $t\mapsto f(t,x,y)$ is measurable on $I$, for every
	 $(x,y)\in\R^2$; \vspace*{0.1cm}
	 
	 \noindent $(*)$\,\,the map $(x,y)\mapsto f(t,x,y)$ is continuous
	 on $\R^2$, for a.e.\,$t\in I$.
	\end{itemize}
	\begin{rem} \label{rem.propatxused}
	 As in Section \ref{sec:abstractset} we point out that, as a consequence of
	 $(\mathrm{A2})_1$-$(\mathrm{A2})_2$, 
	 for every $(t,x)\in I\times\R$ one has $a(t,x)\geq h(t)\geq 0$ and
	 $$0 \leq \int_0^T\frac{1}{a(t,x(t))}\,\d t
	 \leq \int_0^T \frac{1}{h(t)}\,\d t,$$
	 for any measurable function $x:I\to\R$.
	 Hence, $t\mapsto a(t,x(t))\in L^p(I)$.
	\end{rem}	
	
	We now give the definition of \emph{solution} of the problem \eqref{eq.mainBVPconcrete}.
	\begin{defn} \label{def.solconcrete}
	We say that a continuous function $x\in C(I,\R)$ is a \textbf{solution}
	of the Dirichlet problem \eqref{eq.mainBVPconcrete} if it satisfies the following
	properties:
	\begin{itemize}
	\item[(1)] $x\in W^{1,1}(I)$ and $t\mapsto \Phi\big(a(t,x(t))\,x'(t)\big)\in W^{1,1}(I)$;
   	\item[(2)] $\Big(\Phi\big(a(t,x(t))\,x'(t)\big)\Big)' = f(t,x(t),x'(t))$ for almost every $t\in I$;
   	\item[(3)] $x(0) = \nu_1$ and $x(T) = \nu_2$.
  \end{itemize}
  If $x$ fulfills only (1) and (2), we say that 
  $x$ is a \textbf{solution of the ODE}
  \begin{equation} \label{eq.maiODEtocite}
   \dsy\Big(\Phi\big(a(t,x(t))\,x'(t)\big)\Big)' = f(t,x(t),x'(t)).
  \end{equation}
  \end{defn}
  In order to clearly state the main result of this section,
  we also need to introduce the definition
  of \emph{upper/lower solution}
  of the equation in \eqref{eq.mainBVPconcrete}.
  \begin{defn} \label{def.lowerupper}
   We say that a continuous function $\alpha\in C(I,\R)$ is a \textbf{lower} [respectively
   \textbf{upper}] \textbf{solution}
   of the differential equation \eqref{eq.maiODEtocite} if
   \begin{itemize}
	\item[(1)] $\alpha\in W^{1,1}(I)$ and $t\mapsto \Phi\big(a(t,\alpha(t))\,\alpha'(t)\big)\in W^{1,1}(I)$;
   	\item[(2)] $\Big(\Phi\big(a(t,\alpha(t))\,\alpha'(t)\big)\Big)' \geq\,[\leq]\,\, 
   	f(t,\alpha(t),\alpha'(t))$ for almost every $t\in I$.
   	\end{itemize}
  \end{defn}
  \begin{rem} \label{rem.contax}
   If $x\in W^{1,1}(I)$ is a solution
   of the problem \eqref{eq.mainBVPconcrete}, we denote
   by
   $\mathcal{A}_x$ the unique continuous function on $I$ such that
   (see also Remark \ref{rem.definitionAxcont})
   $$\mathcal{A}_x(t) = a(t,x(t))\,x'(t) \quad \text{for a.e.\,$t\in I$}.$$
   Notice that, as a consequence of
   condition (1) in Definition \ref{def.solconcrete} (and again of the fact
   that $\Phi$ is a homeomorphism, see (H1)), such a function exists. \medskip

   Analogously, if $\alpha\in W^{1,1}(I)$ is a lower/upper solution
   of \eqref{eq.maiODEtocite}, we denote by $\mathcal{A}_\alpha$
   the unique continuous function on $I$ such that
   $$\mathcal{A}_\alpha(t) = a(t,\alpha(t))\,\alpha'(t) \quad \text{for a.e.\,$t\in I$}.$$	
   The existence of such a function follows from (1) in Definition \ref{def.lowerupper}.
  \end{rem}  
  We are ready to state our main existence result.
  \begin{thm} \label{thm.mainconcrete}
   Let us assume that, together with the structural assumptions \emph{(A1)}-to-\emph{(A3)},
   the following additional hypotheses are satisfied:
   \begin{itemize}
   \item[{(A1')}] there exists a pair of lower and upper solutions $\alpha,\beta \in W^{1,1}(I)$ of
   the dif\-fe\-ren\-tial equation \eqref{eq.maiODEtocite} such that
   $\alpha(t)\leq \beta(t)$ for every $t\in I$;
   \item[{(A2')}] for every $R > 0$ and every non-negative function $\gamma\in L^p(I)$ there
   exists a non-negative function $h = h_{R,\gamma}\in L^p(I)$ such that
   \begin{equation} \label{eq.estimfh}
     |f(t,x,y(t))|\leq h_{R,\gamma}(t)
	\end{equation}    
	for a.e.\,$t\in I$, every $x\in\R$ with 
	$|x|\leq R$ and every function $y\in L^p(I)$
	satisfying $|y(t)|\leq \gamma(t)$ a.e.\,on $I$.
	\item[{(A3')}] there exist a constant $H > 0$, a non-negative function
	$\mu\in L^q(I)$ \emph{(}for some $1<q\leq\infty$\emph{)}, a non-negative function
	$l\in L^1(I)$ and a non-negative mea\-su\-rable
	function $\psi:(0,\infty)\to(0,\infty)$ such that
	\begin{align}
	 &(\star)\,\,1/\psi\in L^1_{\loc}(0,\infty)
	 \quad \text{and} \quad 
	 \int_1^\infty \frac{1}{\psi(t)}\,\d t = \infty; \label{eq.integralpsidiv} \\[0.2cm] 
	 &(\star)\,\,|f(t,x,y)| \leq \psi\Big(|\Phi(a(t,x)\,y)|\Big)
	 \cdot\Big(l(t)+\mu(t)\,|y|^{\frac{q-1}{q}}\Big); \label{eq.Nagumocondition}
	\end{align}
	for a.e.\,$t\in I$, every $x\in [\alpha(t),\beta(t)]$ and every $y\in\R$ with
	$|y| \geq H$.
   \end{itemize}
   Then, for every $\nu_1\in [\alpha(0),\beta(0)]$ and every $\nu_2\in [\alpha(T),\beta(T)]$,
   the \emph{(}singular\emph{)} 
   Dirichlet problem \eqref{eq.mainBVPconcrete} possesses a solution
   $x\in W^{1,p}(I)\subseteq W^{1,1}(I)$ 
   \emph{(}where $p > 1$ as is assumption \emph{(A2)}\emph{)}, further satisfying
	\begin{equation} \label{eq.xbetweenUL}   
   \alpha(t)\leq x(t)\leq \beta(t) \quad \text{for every $t\in I$}.
   \end{equation}
   Furthermore, if $M > 0$ is \emph{any}
   real number such that $\sup_I|\alpha|,\,\sup_I|\beta| \leq M$,
   it is possible to find a real $L_M > 0$, \emph{only depending on $M$},
   such that
   \begin{align} 
    \max_{t\in I}\big|x(t)\big|\leq M \quad \text{\emph{and}} \quad
    \max_{t\in I}\big|\mathcal{A}_x(t)\big| \leq L_M. \label{eq.estimsolML}
   \end{align}
  \end{thm}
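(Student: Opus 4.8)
The strategy is to deduce Theorem~\ref{thm.mainconcrete} from the abstract existence result Theorem~\ref{thm.mainabstract} by constructing truncated/modified data $\Phi,A,F$ satisfying (H1)--(H3), solving the modified problem, and then verifying that the solution actually solves the original problem and obeys the bounds \eqref{eq.xbetweenUL}--\eqref{eq.estimsolML}. First I would set up a \emph{truncation operator} $T$ defined by $T(t,x) := \max\{\alpha(t),\min\{x,\beta(t)\}\}$, so that $T(t,x)\in[\alpha(t),\beta(t)]$ always and $T(t,x)=x$ precisely when $x$ lies between the lower and upper solutions. Using this I would define the modified operators on $W^{1,p}(I)$ by $A_x(t) := a(t,T(t,x(t)))$ (which is continuous w.r.t.\ the uniform topology and, by $(\mathrm{A2})_2$, satisfies $h\leq A_x\leq h_2$ for a suitable continuous $h_2$ with $1/h_2\in L^p$, giving $(\mathrm H2)$) and, for the right-hand side, a Carath\'eodory modification $\tilde f$ of $f$ that freezes $x$ at $T(t,x(t))$ and caps the $y$-dependence so that the resulting $F_x(t) := \tilde f(t,x(t),x'(t))$ is \emph{uniformly} $L^1$-bounded by a fixed $\psi\in L^1(I)$, thereby securing (H3). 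The standard device here is a further truncation in the $x'$ variable at some level to be fixed later, together with an additive penalization term of the form $\arctan(x-T(t,x))$ (or similar) that pushes solutions back into the strip $[\alpha,\beta]$.

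\medskip

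\textbf{Applying the abstract theorem and locating the solution in the strip.} Once (H1)--(H3) hold, Theorem~\ref{thm.mainabstract} furnishes a solution $x\in W^{1,p}(I)$ of the modified BVP. The crucial step is then to show $\alpha(t)\leq x(t)\leq\beta(t)$ on $I$, which by construction of the truncation makes $A_x(t)=a(t,x(t))$ and removes the penalization, so that $x$ solves the genuine equation. I would prove, say, $x\leq\beta$ by a maximum-principle/contradiction argument: if $x-\beta$ attains a positive maximum at an interior point, then on the corresponding sub-interval one compares $\big(\Phi(a(t,x)x')\big)'$ against $\big(\Phi(a(t,\beta)\beta')\big)'$ using the upper-solution inequality in Definition~\ref{def.lowerupper}(2) and the sign of the penalization term, reaching a contradiction with the monotonicity of $\Phi$; the boundary conditions $\nu_1\in[\alpha(0),\beta(0)]$, $\nu_2\in[\alpha(T),\beta(T)]$ handle the endpoints. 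The same argument with roles reversed gives $\alpha\leq x$.

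\medskip

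\textbf{The derivative bound via the Wintner--Nagumo condition.} The genuinely hard part, and the reason the truncation level in $x'$ cannot be chosen arbitrarily, is the a~priori bound on $\mathcal A_x(t)=a(t,x(t))x'(t)$ in \eqref{eq.estimsolML}; this is where hypotheses $(\mathrm{A3}')$ and $(\mathrm{A2}')$ enter. I would argue that since $x$ already satisfies $|x|\leq M$ and $a(t,x(t))\in L^p$ with $1/a(t,x(t))\in L^p$ (Remark~\ref{rem.propatxused}), the quantity $v(t):=\Phi(\mathcal A_x(t))\in W^{1,1}(I)$ with $v'=f(t,x,x')$. On the set where $|x'|\geq H$, dividing the Nagumo inequality \eqref{eq.Nagumocondition} by $\psi(|v|)$ and integrating, using the change of variables $s=|v(t)|$ together with the divergence $\int_1^\infty \d t/\psi(t)=\infty$ from \eqref{eq.integralpsidiv} and a H\"older estimate on the factor $l(t)+\mu(t)|x'|^{(q-1)/q}$ (controlled by $\mu\in L^q$, $l\in L^1$, and $\|x'\|_{L^1}$ bounded in terms of $M$ via $x(T)-x(0)=\int_0^T x'$), yields a bound $\max_I|v|\leq C(M)$ and hence $\max_I|\mathcal A_x|\leq L_M:=\max\{|\Phi^{-1}(\pm C(M))|,\,\sup_{|x|\le M}a(t,x)\cdot H\}$, depending only on $M$. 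This bound then justifies choosing the $x'$-truncation level large enough (depending only on $M$ through $L_M$ and $1/h\in L^p$) that the solution of the modified problem never activates that truncation, closing the argument. The main obstacle is carrying out this Nagumo integration cleanly in the singular setting where $a$ may vanish: one must work with $\mathcal A_x=a\,x'$ rather than $x'$ itself, and estimate $\|x'\|_{L^1}\le \|1/h\|_{L^p}\,\|\mathcal A_x\|_{L^{p'}}$-type quantities so that every constant remains expressible through $M$ alone.
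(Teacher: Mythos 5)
Your proposal follows essentially the same route as the paper's own proof: truncate $x$ between $\alpha$ and $\beta$, cap the derivative at a level fixed \emph{in advance} through the Nagumo-derived constant $L_M$ (chosen as in \eqref{eq.choiceL2}), add an $\arctan$ penalization, apply the abstract Theorem \ref{thm.mainabstract} to the truncated problem, and then verify a posteriori that the solution lies in the strip $[\alpha,\beta]$ and satisfies $|\mathcal{A}_x|\leq L_M$, so that no truncation is active. One correction to your sketch of the derivative bound: the claim that $\|x'\|_{L^1}$ is controlled via $x(T)-x(0)=\int_0^T x'\,\d t$ is false in general, and as in the paper's proof (Step I and Step II of Proposition \ref{prop.equivalenceBVPs}) one must first show $\min_I|\mathcal{A}_x|\leq N$ by an integral argument, and then run the Nagumo integration on a maximal interval where $N<\mathcal{A}_x<L_M$, on which $x'>H>0$ has constant sign, so that H\"older is applied with $\int_{t_1}^{t_2}x'\,\d t = x(t_2)-x(t_1)\leq 2M$.
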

  The main idea behind the proof of Theorem \ref{thm.mainconcrete} is to think
  of the Dirichlet problem \eqref{eq.mainBVPconcrete} as a \emph{particular case}
  of an abstract BVPs of the form \eqref{eq.mainBVPAbstract}, and then
  to apply the existence result contained in Theorem \ref{thm.mainabstract}.
  
  Unfortunately, we \emph{cannot} directly apply our Theorem \ref{thm.mainabstract}
  to the problem \eqref{eq.mainBVPconcrete}: in fact, in general,
  we cannot expect the (well-defined) functional
  $$W^{1,p}(I)\ni x \mapsto F_x := f(t,x(t),x'(t)) \in L^1(I)$$
  to satisfy assumption (H3) (or, more precisely, estimate \eqref{eq.psiboundsF}).
	  
  Thus, following an approach similar to that exploited
  by \cite{FerrPap,MaPa2017}, we introduce
  a suitable \emph{truncated version}
  of problem \eqref{eq.mainBVPconcrete},
  to which Theorem \ref{thm.mainconcrete}
  can apply. 
  To this end, to simplify the notation,
  we first fix some relevant constants we shall
  need for the proof of Theorem \ref{thm.mainconcrete}; henceforth, we suppose
  that all the assumption in the statement of
  Theorem \ref{thm.mainconcrete} are satisfied. \medskip
  
	\noindent Let $M > 0$ be any real number such that 
  $\sup_I|\alpha|,\,\sup_I|\beta|
  \leq M$ and let $H > 0$ be the constant appearing in
  as\-sum\-ption (A3'); moreover, we define
  \begin{equation} \label{eq.defa0ash2}
   a_0 := \max\big\{a(t,x)\,:\,(t,x)\in I\times [-M,M]\big\}
  \end{equation}   
  We choose a real $N > 0$ such that
  \begin{equation} \label{eq.choiceN}
  \begin{split}
  & N > \max\bigg\{H, \frac{2M}{T}\bigg\}\cdot a_0
  \qquad \text{and} \\ 
  & \qquad \Phi(N)\cdot\Phi(-N) < 0\quad (\text{with $\Phi(N) > 0$});
   \end{split}
  \end{equation}
   accordingly, we fix $L_M > 0$ in such a way that (see \eqref{eq.integralpsidiv})
   \begin{equation} \label{eq.choiceL2}
    \begin{split}
     \min\bigg\{\int_{\Phi(N)}^{\Phi(L_M)}&\frac{1}{\psi(s)}\,\d s,
	 \int_{-\Phi(-N)}^{-\Phi(-L_M)}\frac{1}{\psi(s)}\,\d s\bigg\} 
	  > \|l\|_{L^1} + 
	 \|\mu\|_{L^q}\,(2M)^{\frac{q-1}{q}}. 
	 \end{split}
	\end{equation}
	Notice that $L_M$ depends on $M$ (and also on $l$ and $\mu$),
	but not on $\alpha,\,\beta$ nor on $\nu_1$ and $\nu_2$.
	Introducing the functions 
	$$\gamma_0 := L_M/h \in L^p(I)\qquad\text{and}\qquad
	\hat{\gamma} := \gamma_0+|\alpha'|+|\beta'|\in L^1(I),$$ 
	we then consider the following
	\emph{truncating operators}:
	\begin{eqnarray*}
     & \mathcal{T}: W^{1,1}(I)\longto W^{1,1}(I), \qquad & \mathcal{T}(x)(t) :=
    \begin{cases}
     \alpha(t), & \text{if $x(t) < \alpha(t)$}; \\
     x(t), & \text{if $x(t)\in [\alpha(t),\beta(t)]$}; \\
     \beta(t), & \text{if $x(t) > \beta(t)$};
    \end{cases} \\[0.25cm]
     & \mathcal{D}: L^1(I)\longto L^1(I), \qquad  & \mathcal{D}(z)(t) :=
    \begin{cases}
     -\hat{\gamma}(t), & \text{if $z(t) < -\hat{\gamma}(t)$}; \\
     z(t), & \text{if $|z(t)|\leq \hat{\gamma}(t)$}; \\
     \hat{\gamma}(t), & \text{if $z(t) > \hat{\gamma}(t)$}.
    \end{cases}
   \end{eqnarray*}
   We also consider the truncated function $f^*:I\times\R^2\to\R$ defined by
   $$f^*(t,x,y) := \begin{cases}
    f\big(t,\beta(t),\beta'(t)\big) + \arctan\big(x-\beta(t)\big), & \text{if $x > \beta(t)$}; \\
    f(t,x,y), & \text{if $x\in [\alpha(t),\beta(t)]$}; \\
    f\big(t,\alpha(t),\alpha'(t)\big) + \arctan\big(x-\alpha(t)\big), & \text{if $x < \alpha(t)$}.  
   \end{cases}
   $$
   By means of the function $f^*$ and of
   the operators $\mathcal{T}$ and $\mathcal{D}$, we are finally in a position
   to introduce a ``truncated version'' of the Dirichlet problem
   \eqref{eq.mainBVPconcrete}:
      \begin{equation} \label{eq.BVPtrunc}
    \begin{cases}
    \dsy\bigg(\Phi\Big(a\big(t,\mathcal{T}(x)(t)\big)\,x'(t)\Big)\bigg)' = 
    f^*\Big(t, x(t), \mathcal{D}\big(\mathcal{T}(x)'(t)\big)\Big), & \text{a.e.\,on $I$}, \\[0.25cm]
   x(0) = \nu_1,\,\,x(T) = \nu_2.
    \end{cases}
   \end{equation}
   
   The next proposition shows that the ``abstract'' existence
   result in Theo\-rem \ref{thm.mainabstract}
   does apply to the ``truncated'' Dirichlet problem \eqref{eq.BVPtrunc}.
   \begin{prop} \label{prop.existenceBVPtrunc}
    Let the above assumptions and notation apply. Then, there exists (at least)
    one solution $x\in W^{1,p}(I)$ of the Dirichlet problem \eqref{eq.BVPtrunc}.
   \end{prop}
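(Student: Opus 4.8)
The plan is to verify that the truncated Dirichlet problem \eqref{eq.BVPtrunc} fits the abstract framework of Theorem \ref{thm.mainabstract}, by checking that the associated operators $A$ and $F$ satisfy hypotheses (H1)--(H3). Hypothesis (H1) is immediate since $\Phi$ is the same strictly increasing homeomorphism from (A1). For the operator $A$, I would set $A_x(t) := a\big(t,\mathcal{T}(x)(t)\big)$ and for $F$, I would set $F_x(t) := f^*\big(t,x(t),\mathcal{D}(\mathcal{T}(x)'(t))\big)$.

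First I would establish (H2). Since $a$ is continuous on $I\times\R$ (assumption (A2)) and the truncation operator $\mathcal{T}:W^{1,p}(I)\to C(I,\R)$ is continuous with respect to the uniform topology (the truncation between two fixed continuous functions $\alpha,\beta$ is a continuous nonlinear superposition), the composition $x\mapsto a(\cdot,\mathcal{T}(x)(\cdot))$ is continuous wrt the uniform topology of $C(I,\R)$. For $(\mathrm{H2})_1$ and $(\mathrm{H2})_2$, I would take $h_1 := h$ and $h_2 := a_0$ (the constant defined in \eqref{eq.defa0ash2}): since $\mathcal{T}(x)(t)\in[\alpha(t),\beta(t)]\subseteq[-M,M]$ for every $x$, assumption $(\mathrm{A2})_2$ gives $h(t)\leq a(t,\mathcal{T}(x)(t))\leq a_0$, and $1/h\in L^p(I)$ by $(\mathrm{A2})_1$ while $1/a_0$ is a positive constant, hence in $L^p(I)$.

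The heart of the argument is hypothesis (H3), and this is where the truncations $\mathcal{T}$ and $\mathcal{D}$ are essential. I would verify the uniform bound \eqref{eq.psiboundsF} by exploiting assumption (A2'): the truncated derivative satisfies $|\mathcal{D}(\mathcal{T}(x)'(t))|\leq\hat\gamma(t)$ by the very definition of $\mathcal{D}$, with $\hat\gamma = \gamma_0+|\alpha'|+|\beta'|$ a fixed $L^1$ function; moreover $f^*(t,x,y)$ for $x$ outside $[\alpha(t),\beta(t)]$ is bounded by $|f(t,\beta(t),\beta'(t))|+\pi/2$ (or the analogous term with $\alpha$), while for $x\in[\alpha(t),\beta(t)]$ it equals $f(t,x,y)$ with $|x|\leq M$ and $|y|\leq\hat\gamma(t)$. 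Applying (A2') with $R := M$ and $\gamma := \hat\gamma$ yields a single $L^p(I)\subseteq L^1(I)$ majorant $\psi := h_{M,\hat\gamma}+|f(\cdot,\beta,\beta')|+|f(\cdot,\alpha,\alpha')|+\pi/2$ independent of $x$. The main obstacle will be establishing the continuity of $F$ from $W^{1,p}(I)$ to $L^1(I)$: I expect to argue that if $x_n\to x_0$ in $W^{1,p}$, then $x_n\to x_0$ uniformly and $x_n'\to x_0'$ in $L^p$, so $\mathcal{T}(x_n)'\to\mathcal{T}(x_0)'$ and hence $\mathcal{D}(\mathcal{T}(x_n)')\to\mathcal{D}(\mathcal{T}(x_0)')$ in $L^p$ (using the Lipschitz continuity of $\mathcal{D}$); passing to a subsequence with pointwise a.e.\ convergence and using the Carath\'eodory property of $f$ together with the continuity of $\arctan$, the integrand $F_{x_n}\to F_{x_0}$ a.e., and the uniform bound by $\psi$ permits Lebesgue's Dominated Convergence Theorem to conclude $L^1$-convergence via the standard subsequence argument. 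Once (H1)--(H3) are verified, Theorem \ref{thm.mainabstract} directly furnishes a solution $x\in W^{1,p}(I)$ of \eqref{eq.BVPtrunc}.
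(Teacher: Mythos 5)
Your proposal is correct and follows essentially the same route as the paper: both verify that the operators $A_x(t)=a\big(t,\mathcal{T}(x)(t)\big)$ and $F_x(t)=f^*\big(t,x(t),\mathcal{D}(\mathcal{T}(x)'(t))\big)$ satisfy (H1)--(H3) (with $h_1=h$, $h_2\equiv a_0$, and an $L^1$ majorant built from $h_{M,\hat\gamma}$ via (A2')) and then invoke Theorem \ref{thm.mainabstract}. The only difference is cosmetic: you spell out the continuity of $F$ (which the paper delegates to \cite[Theorem 3.1]{CaMaPa}) and your majorant carries the redundant extra terms $|f(\cdot,\beta,\beta')|+|f(\cdot,\alpha,\alpha')|$, which are already dominated by $h_{M,\hat\gamma}$.
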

   \begin{proof}
    We consider the operators $A$ and $F$ defined as follows:
	\begin{align*}
   & A: W^{1,p}(I)\longto C(I,\R), \qquad A_x(t) := a\big(t,\mathcal{T}(x)(t)\big), \\
   & F: W^{1,p}(I)\longto L^1(I), \qquad F_x(t) := 
    f^*\Big(t, x(t), \mathcal{D}\big(\mathcal{T}(x)'(t)\big)\Big) 
   \end{align*}
   By means of these operators, the problem \eqref{eq.BVPtrunc} can be
   re-written as
   \begin{equation*}
    \begin{cases}
    \dsy\Big(\Phi\big(A_x(t)\,x'(t)\big)\Big)' = F_x(t), & \text{a.e.\,on $I$}, \\[0.2cm]
   x(0) = \nu_1,\,\,x(T) = \nu_2.
    \end{cases}
   \end{equation*}
   We claim that $A$ and $F$ satisfy assumptions (H2) and (H3) in Theorem \ref{thm.mainabstract}.
   
   First of all, since $\mathcal{T}$ is continuous with respect to the uniform
   topology of $C(I,\R)$ (as is very easy to see) and since, by the choice of $M$, we have
   $$-M \leq \alpha(t) \leq \mathcal{T}(x)(t) \leq \beta(t)\leq M \quad \text{for every
   $t\in I$},$$
   the uniform continuity of $a$
   on $I\times[-M,M]$ implies that $A$ is con\-ti\-nuo\-us
   from $W^{1,p}(I)$ (as a subspace of $C(I,\R)$) to $C(I,\R)$.
   Moreover, by \eqref{eq.defa0ash2} one has
   $$A_x(t) = a\big(t,\mathcal{T}(x)(t)\big)\leq a_0 \quad \text{for every $x\in W^{1,p}(I)$
	and every $t\in I$}.$$
   Finally, by assumption (A2), there exists 
   $h\in C(I,\R)$ such that \medskip
   
 	$(\star)$\,\,$h\geq 0$ and $1/h\in L^p(I)$; \medskip
 	
	$(\star)$\,\,$A_x(t)\geq h(t)$ for every $x\in W^{1,p}(I)$ and every $t\in I$. \medskip
	
	\noindent Thus, the operator $A$ satisfies (H2) in Theorem \ref{thm.mainabstract}
	(with $h_2(t) \equiv a_0$). \medskip
	
	As for the functional $F$, by arguing exactly as in \cite[Theorem 3.1]{CaMaPa}, one can recognize that
	it is continuous from $W^{1,p}(I)$ to $L^1(I)$ and that
	$$|F_x(t)| \leq \Theta(t) := h_{M,\hat{\gamma}}(t)+\frac{\pi}{2}$$
	for every $x\in W^{1,p}(I)$ and almost every $t\in I$
	(here, $h_{M,\hat{\gamma}}$ is the function appearing in assumption (A2')
	and corresponding to $M$ and $\hat{\gamma} = \gamma_0 + |\alpha'|+|\beta'|$).
	Since, obviously, $\Theta \in L^1(I)$, we con\-clu\-de that
	$F$ satisfies (H3) in Theorem \ref{thm.mainabstract}.
	
	Gathering together all these facts, we are allowed to apply
	Theorem \ref{thm.mainabstract} to problem \eqref{eq.BVPtrunc}, which therefore
	admits a solution $x\in W^{1,p}(I)$.
   \end{proof}
	We now turn to prove that any solution
	of
	\eqref{eq.BVPtrunc} actually solves \eqref{eq.mainBVPconcrete}.   
	\begin{prop} \label{prop.equivalenceBVPs}
	 Let the above assumptions and notation do apply, and let $x\in W^{1,p}(I)$ be any
	 solution of the truncated problem \eqref{eq.BVPtrunc}. 
	 
	 Then the following facts hold:
	 \begin{itemize}
	  \item[{(i)}] $\alpha(t)\leq x(t)\leq \beta(t)$ for every $t\in I$; 
	  \item[{(ii)}] $\sup_I|x|\leq M$;
	  \item[{(iii)}] $\big|\mathcal{A}_x(t)\big|\leq L_M$ for every $t\in I$;
	  \item[{(iv)}] $|x'(t)|\leq L_M/h(t) = \gamma_0(t)$ for a.e.\,$t\in I$.
	 \end{itemize}
	\end{prop}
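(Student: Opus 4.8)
The plan is to prove the four claims in the order (i)-(ii)-(iii)-(iv), since each relies on the previous. The key observation is that $x$ solves the truncated problem with $A_x(t)=a(t,\mathcal{T}(x)(t))$ and $F_x(t)=f^*(t,x(t),\mathcal{D}(\mathcal{T}(x)'(t)))$, and the truncations $\mathcal{T},\mathcal{D},f^*$ were designed precisely so that once we confirm $\alpha\le x\le\beta$, all the truncations become inactive and $x$ solves the original equation.

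First I would prove (i), the lower/upper solution bound, by a maximum-principle-type argument. To show $x(t)\le\beta(t)$, I would set $v:=x-\beta$ and argue by contradiction, supposing $\max_I v>0$; since $x(0)=\nu_1\le\beta(0)$ and $x(T)=\nu_2\le\beta(T)$ (using the hypotheses on $\nu_1,\nu_2$), the maximum is attained at an interior point $t_0$, and there is a maximal open interval $(t_1,t_2)$ around $t_0$ on which $x>\beta$. On this interval $\mathcal{T}(x)=\beta$, so $A_x(t)=a(t,\beta(t))$, and $f^*$ uses its top branch $f(t,\beta,\beta')+\arctan(x-\beta)$. Comparing the equation satisfied by $x$ with the differential inequality defining $\beta$ as an upper solution, and exploiting that $\arctan(x-\beta)>0$ on $(t_1,t_2)$, I would derive that $\Phi(a(t,\beta)x')-\Phi(a(t,\beta)\beta')$ is strictly increasing where $v>0$; combined with the boundary behavior $v(t_1)=v(t_2)=0$ this forces a contradiction via the strict monotonicity of $\Phi$. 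The bound $x\ge\alpha$ is symmetric. This comparison step is the \textbf{main obstacle}, because the argument must be carried out carefully at the level of $W^{1,1}$ functions (integrating the differential inequality rather than evaluating pointwise derivatives) and the presence of $a(t,\cdot)$ inside $\Phi$ means one must track the arguments of $\Phi$ rather than the derivatives directly.

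Once (i) holds, (ii) is immediate: from $\alpha\le x\le\beta$ and $\sup_I|\alpha|,\sup_I|\beta|\le M$ we get $\sup_I|x|\le M$. Next I would establish (iii), the bound $|\mathcal{A}_x(t)|\le L_M$ on the quantity $\mathcal{A}_x=a(t,x)x'$ (well-defined and continuous by Remark \ref{rem.definitionAxcont}). This is where the Wintner--Nagumo condition \eqref{eq.Nagumocondition} and the choice of $L_M$ in \eqref{eq.choiceL2} enter. Since by (i) we have $x(t)\in[\alpha(t),\beta(t)]$, the truncation $\mathcal{T}$ is inactive, so $A_x(t)=a(t,x(t))$ and $\Phi(\mathcal{A}_x(t))=\Phi(a(t,x)x')$ is exactly the quantity whose derivative is controlled. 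The plan is the standard Nagumo argument: if $|\mathcal{A}_x|$ exceeded $L_M$ somewhere, then by the mean-value / boundary estimate (using \eqref{eq.choiceN} with $N>\tfrac{2M}{T}a_0$ to locate a point where $|\mathcal{A}_x|$ is moderate, i.e.\ $\le \Phi$-controlled by $N$) there would be a subinterval on which $|\mathcal{A}_x|$ climbs from $\Phi(N)$-level to $\Phi(L_M)$-level; integrating \eqref{eq.Nagumocondition} after dividing by $\psi(|\Phi(\cdot)|)$, changing variables $s=\Phi(a(t,x)x')$, and applying Hölder to the $\mu(t)|x'|^{(q-1)/q}$ term yields $\int \frac{ds}{\psi(s)}\le\|l\|_{L^1}+\|\mu\|_{L^q}(2M)^{(q-1)/q}$, contradicting \eqref{eq.choiceL2}. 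Here I would need to keep careful track of the $\mathcal{D}$-truncation inside $f^*$: a priori $F_x$ uses $\mathcal{D}(\mathcal{T}(x)')=\mathcal{D}(x')$, so I must verify that on the relevant set the truncation is inactive, which follows because the Nagumo estimate bounds $|x'|$ by $L_M/h=\gamma_0\le\hat\gamma$.

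Finally, (iv) follows by combining (ii) and (iii): on $[\alpha,\beta]$ we have $A_x=a(t,x)\ge h(t)$, so $|x'(t)|=|\mathcal{A}_x(t)|/a(t,x(t))\le L_M/h(t)=\gamma_0(t)$ for a.e.\ $t\in I$, using $a(t,x(t))\ge h(t)>0$ a.e.\ (assumption $(\mathrm{A2})_2$, recalling $1/h\in L^p$ so $h>0$ a.e.). This in turn confirms retroactively that $|\mathcal{D}(\mathcal{T}(x)'(t))|=|\mathcal{D}(x'(t))|$ agrees with $x'(t)$ since $|x'|\le\gamma_0\le\hat\gamma$, so that $f^*(t,x,\mathcal{D}(x'))=f^*(t,x,x')=f(t,x,x')$ by (i), meaning $x$ solves the genuine equation \eqref{eq.maiODEtocite} and hence, with the boundary data, the Dirichlet problem \eqref{eq.mainBVPconcrete}. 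I expect the bookkeeping linking (iii) and (iv) back to the deactivation of $\mathcal{D}$ and $f^*$ to require a short but careful consistency check.
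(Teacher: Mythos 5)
Your plan is correct and follows essentially the same route as the paper: a contradiction argument on the maximal interval where the truncation $\mathcal{T}$ activates for (i) (with the $\arctan$ perturbation providing the strict inequality, and the comparison carried out by integrating the inequality for $\Phi(\mathcal{A}_x)-\Phi(\mathcal{A}_\beta)$ and locating positive-measure sets where $x'$ and $\beta'$ compare favourably near the endpoints), then (ii) immediately, then the two-step Nagumo argument for (iii) (first $\min_I|\mathcal{A}_x|\le N$ via the choice $N>\frac{2M}{T}a_0$, then the climb from level $N$ to level $L_M$ contradicting \eqref{eq.choiceL2}), and (iv) from (iii) and $a\ge h$. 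The only point to make fully explicit in a write-up is that on the climbing interval one has $x'>N/a_0>H$, which is what legitimises invoking \eqref{eq.Nagumocondition} and simultaneously deactivates $\mathcal{D}$; your sketch already gestures at this.
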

	\begin{proof}
	 Let $x\in W^{1,p}(I)$ be any solution of 
	\eqref{eq.BVPtrunc}. According to Remark \ref{rem.definitionAxcont}, we denote
	by $\mathcal{A}_x$ the unique continuous fun\-ction on $I$ such that
	$$\mathcal{A}_x(t) = A(x(t))\,x'(t) = a\big(x,\mathcal{T}(x)(t)\big)\,x'(t) \quad
	\text{for a.e.\,$t\in I$}.$$
	Once we have proved that $x(t)\in[\alpha(t),\beta(t)]$ for all $t\in I$, we shall obtain
	$$\mathcal{A}_x(t) = a(t,x(t))\,x'(t) \quad \text{for a.e.\,$t\in I$}.$$
	Let us then turn to prove statements (i)-to-(iii). \medskip
	
	 (i)\,\,Let us assume, by contradiction, that $x(\overline{t})\notin
	  [\alpha(\overline{t}),\beta(\overline{t})]$ for some $\overline{t}\in I$; moreover,
	  to fix ideas, let us suppose that $x(\overline{t}) < \alpha(\overline{t})$.
	  
	  Since, by assumptions, $\nu_1 = x(0) \geq \alpha(0)$
	  and $\nu_2 = x(T) \geq \alpha(T)$, it is possible to find
	  suitable points $t_1,t_2,\theta \in I$, with $t_1 < \theta < t_2$, such that \medskip
	  
	  (a)\,\,$\dsy x(\theta) - \alpha(\theta) = \min_{t\in I}\big(x(t)-\alpha(t)\big) < 0;$ \medskip
	  
	  (b)\,\,$x(t_1) - \alpha(t_1) = x(t_2)-\alpha(t_2) = 0$ and $x<\alpha$
   		on $(t_1,t_2)$. \medskip
	  
	  \noindent In particular, from (b) we infer that $\mathcal{T}(x) \equiv \alpha$
	  on $(t_1,t_2)$ and that
	  \begin{equation*}
	   \begin{split}
	    f^*\Big(t, x(t), \mathcal{D}\big(\mathcal{T}(x)'(t)\big)\Big)  
	  & = f(t,\alpha(t),\alpha'(t)) + \arctan\big(x(t)-\alpha(t)\big) \\
	  &  < f(t,\alpha(t),\alpha'(t)), \qquad \text{for a.e.\,$t\in (t_1,t_2)$}.
	  \end{split}
	  \end{equation*}
	  As a consequence, since $x$ solves
	  the Dirichlet problem \eqref{eq.BVPtrunc} and $\alpha$ is a lower solution
	  of the ODE \eqref{eq.maiODEtocite}, for almost every $t\in (t_1,t_2)$ we obtain
	  \begin{equation} \label{eq.touseforabsurd}
	   \begin{split}
    & \Big(\Phi\big(\mathcal{A}_x(t)\big)\Big)' 
    = \bigg(\Phi\Big(a\big(t,\mathcal{T}(x)(t)\big)\,x'(t)\Big)\bigg)' \\[0.2cm]
    & \qquad\quad= f^*\Big(t, x(t), \mathcal{D}\big(\mathcal{T}(x)'(t)\big)\Big)  
    < f(t,\alpha(t),\alpha'(t)) \\[0.2cm]
    & \qquad\quad \leq \bigg(\Phi\Big(a\big(t,\alpha(t)\big)\,\alpha'(t)\Big)\bigg)'
    = \Big(\Phi\big(\mathcal{A}_\alpha(t)\big)\Big)'.
	   \end{split}
	  \end{equation}
	 We now introduce the subsets $I_1,I_2$ of $I$ defined as follows:
	 $$I_1 := \{t\in (t_1,\theta): x'(t) < \alpha'(t)\} \quad \text{and} \quad
	 I_2 := \{t\in (\theta,t_2): x'(t) > \alpha'(t)\}.$$ 
	 Since $x < \alpha$ on $(t_1,t_2)$, it is readily seen that
	 both $I_1$ and $I_2$ must have positive measure; thus, it is possible to find
	 $\tau_1\in I_1$ and $\tau_2\in I_2$ such that \medskip
	 
	 $(\star)$\,\,$0 < h_1(\tau_i) \leq a(\tau_i,\alpha(\tau_i))$ for $i = 1,2$; \medskip
	 
	 $(\star)$\,\,$\mathcal{A}_\alpha(\tau_i) = a(\tau_i,\alpha(\tau_i))\,\alpha'(\tau_i)$ for $i = 1,2$
	 (see Remark \ref{rem.contax}); \medskip
	 
	 $(\star)$\,\,$\mathcal{A}_x(\tau_i) = a(\tau_i,\mathcal{T}(x)(\tau_i))\,x'(\tau_i)
	 = a(\tau_i,\alpha(\tau_i))\,x'(\tau_i)$ for $i = 1,2$. \medskip 
	 
	 \noindent From this, by integrating both sides of inequality
	 \eqref{eq.touseforabsurd} on $[\tau_1,\theta]$, we get
	 \begin{equation*}
	 \begin{split}
	  & \Phi\big(\mathcal{A}_x(\theta)\big)
	  - \Phi\Big(a\big(\tau_1,\alpha(\tau_1)\big)\,x'(\tau_1)\Big) \leq 
	  \Phi\big(\mathcal{A}_\alpha(\theta)\big)
		- \Phi\Big(a\big(\tau_1,\alpha(\tau_1)\big)\,\alpha'(\tau_1)\Big);
	 \end{split}
	 \end{equation*}
	hence, by the choice of $\tau_1$ and the fact
	that $\Phi$ is strictly increasing, one has
	\begin{equation} \label{eq.incontradiction}
	\Phi\big(\mathcal{A}_x(\theta)\big) - 
	\Phi\big(\mathcal{A}_\alpha(\theta)\big) < 0.
	\end{equation}
	On the other hand, if we integrate both sides
	of \eqref{eq.touseforabsurd} on $[\theta,\tau_2]$ we get
	\begin{equation*}
	 \begin{split}
	  & \Phi\Big(a\big(\tau_2,\alpha(\tau_2)\big)\,x'(\tau_2)\Big) - 
	  \Phi\big(\mathcal{A}_x(\theta)\big) \leq 
	   \Phi\Big(a\big(\tau_2,\alpha(\tau_2)\big)\,\alpha'(\tau_2)\Big)
	   - \Phi\big(\mathcal{A}_\alpha(\theta)\big)
	 \end{split}
	 \end{equation*}
	 and thus, by the choice of $\tau_2$ and again the
	  monotonicity of $\Phi$,
	 we obtain
	 \begin{equation*}
	 \begin{split}
	  & \Phi\big(\mathcal{A}_x(\theta)\big) - 
		\Phi\big(\mathcal{A}_\alpha(\theta)\big) > 0,
	 \end{split}
	 \end{equation*}
	 This is clearly in contradiction with \eqref{eq.incontradiction}, hence
	 $x\geq \alpha$ on $I$. By arguing analogously one can prove
	 that $x\leq\beta$ on $I$, and statement (i) is established. \medskip
	 
	 \noindent (ii)\,\,By statement (i) and the choice of $M$, we immediately get
	 $$-M \leq \alpha(t)\leq x(t)\leq\beta(t)\leq M \quad \text{for every $t\in I$}.$$
	 \medskip
	 \noindent (iii)\,\,We split the proof of this statement into two steps. \medskip
	 
	 \textsc{Step I:} We begin by showing that, if $N > 0$ is as in \eqref{eq.choiceN}, then
	 \begin{equation} \label{eq.minAxStepI}
	  \min_{t\in I}\big|\mathcal{A}_x(t)\big| \leq N.
	  \end{equation}
	 We argue again by contradiction and, to fix ideas, we assume that 
	 \begin{equation} \label{eq.todeducexprimegeq}
	  \mathcal{A}_x(t) = a\big(t,x(t)\big)\,x'(t) > N \quad \text{for a.e.\,$t\in I$}.
	  \end{equation}
	 By integrating both sides of this inequality on $[0,T]$, we get
	 $$\int_0^T\mathcal{A}_x(t)\,\d t = \int_0^Ta(t,x(t))\,x'(t)\,\d t > NT;$$
	 from this, by statement (ii), \eqref{eq.defa0ash2} and the choice of $N$ in 
	 \eqref{eq.choiceN}, we obtain
	 (notice that, by \eqref{eq.todeducexprimegeq}, we have $x'(t) > 0$ a.e.\,on $I$)
	 \begin{equation*}
	 \begin{split}
	  & NT < \int_0^Ta(t,x(t))\,x'(t)\,\d t 
	  \leq a_0\cdot\int_0^Tx'(t)\,\d t \\[0.2cm]
	  & \quad = (\nu_2-\nu_1)\cdot a_0 
	  = |\nu_2-\nu_1|\cdot a_0 \leq (2M)\cdot a_0
	  < NT.
	 \end{split}
	 \end{equation*}
	 This is clearly a contradiction, hence $\mathcal{A}_x\leq N$ on $I$. By arguing analogously
	 one can also prove that $\mathcal{A}_x\geq - N$ on $I$, and \eqref{eq.minAxStepI}
	 is established.
	 
	 \medskip
	 
	 \textsc{Step II:} We now turn to prove statement (iii). To this end,
	 arguing once again by contradiction, we assume that there exists $\overline{t}\in I$ such that
	 $$\big|\mathcal{A}_x(\overline{t})\big| > L_M;$$
	 moreover, to fix ideas, we suppose that 
	 $\mathcal{A}_x(\overline{t}) > L_M$. \medskip
	 
	 Since, by definition, $L_M > N$, from Step I and  
	 Remark \ref{rem.contax} we 
	 infer the existence of two points $t_1,t_2\in I$, with 
	 $t_1 < t_2$, such that (for example) \medskip
	 
	 $(*)$\,\,$\mathcal{A}_x(t_1) = N$ and $\mathcal{A}_x(t_2) = L_M$; \medskip
	 
	 $(**)$\,\,$0 < N < \mathcal{A}_x(t) < L_M$ for every $t\in (t_1,t_2)$; \medskip
	 
	 \noindent from this, by statement (ii), \eqref{eq.defa0ash2} and the choice of 
	 $N$ (see \eqref{eq.choiceN}), we obtain
	\begin{align} \label{eq.mainestimtoabsurd}
	0 < H <
	\frac{N}{a_0} <
	x'(t) < \frac{L_M}{h(t)} = \gamma_0(t)
	\leq \hat{\gamma}(t) \quad \text{for a.e.\,$t\in(t_1,t_2)$}.
	\end{align}	 
	 Now, by definition of $\mathcal{D}$, we deduce from 
	 \eqref{eq.mainestimtoabsurd} that
	 $\mathcal{D}(x') = x'$ a.e.\,on 
	 $(t_1,t_2)$; moreover, by statement (i)
	 and the definition of $f^*$, we have
	 $$f^*\Big(t, x(t), \mathcal{D}\big(\mathcal{T}(x)'(t)\big)\Big)  
	 = f(t,x(t),x'(t)) \quad \text{for a.e.\,$t\in(t_1,t_2)$}.$$
	 As a consequence, since $x(t)\in[\alpha(t),\beta(t)]$ for every $t\in I$
	 (by statement (i)) and since $x'(t) > H > 0$ for a.e.\,$t\in (t_1,t_2)$
	 (again by \eqref{eq.mainestimtoabsurd}), we are entitled
	 to apply estimate \eqref{eq.Nagumocondition}, 
	 which gives (remind that $x$ solves \eqref{eq.BVPtrunc} and see $(**)$)
	 \begin{align*}
	  & \bigg|\Big(\Phi\big(\mathcal{A}_x(t)\big)\Big)'\bigg| = \dsy
	  \bigg|\Big(\Phi\big(a(t,x(t))\,x'(t)\big)\Big)'\bigg|
	  = \big|f(t,x(t),x'(t))\big| \\[0.1cm]
	  & \qquad\qquad\leq 
	  \psi\Big(|\Phi(a(t,x(t))\,x'(t))|\Big)
	 \cdot\Big(l(t)+\mu(t)\,(x'(t))^{\frac{q-1}{q}}\Big) \\[0.1cm]
	  & \qquad\qquad =
	  \psi\Big(\big|\Phi\big(\mathcal{A}_x(t)\big)\big|\Big)\cdot 
	  \Big(l(t)+\mu(t)\,(x'(t))^{\frac{q-1}{q}}\Big) \\[0.1cm]
	  & \qquad\qquad (\text{since, by $(**)$ and \eqref{eq.choiceN}, we have
	  $\Phi(\mathcal{A}_x(t)) > \Phi(N) > 0$}) \\[0.1cm]
	  & 
	  \qquad\qquad =
	  \psi\Big(\Phi\big(\mathcal{A}_x(t)\big)\Big)\cdot 
	  \Big(l(t)+\mu(t)\,(x'(t))^{\frac{q-1}{q}}\Big)
	 \qquad \big(\text{a.e.\,on $(t_1,t_2)$}\big).
	 \end{align*}
	 In particular, by exploiting this inequality, we obtain 
	 \begin{align*}
	 & \int_{\Phi(N)}^{\Phi(L_M)}\frac{1}{\psi(s)}\,\d s
	    = \int_{\Phi(\mathcal{A}_x(t_1))}^{\Phi(\mathcal{A}_x(t_2))}
	    \frac{1}{\psi(s)}\,\d s \\[0.2cm]
	   & \qquad \leq 
	 \int_{t_0}^{t_1}\frac{\big(\Phi\big(\mathcal{A}_x(t)\big)\big)'}
	   {\psi\big(\Phi\big(\mathcal{A}_x(t)\big)\big)}\,\d t
	   \leq \int_{t_0}^{t_1}
	   \Big(l(t)+\mu(t)\,(x'(t))^{\frac{q-1}{q}}\Big)\,\d t \\[0.2cm]
	   & \qquad \big(\text{by H\"older's inequality}\big) \\
	   & \qquad
	   \leq \|l\|_{L^1} + \|\mu\|_{L^q}\cdot
	   \bigg(\int_{t_0}^{t_1} x'(t)\,\d t\bigg)^{\frac{q-1}{q}} \\
	   & \qquad \leq
	   \|l\|_{L^1} + \|\mu\|_{L^q}\cdot\big(x(t_1)-x(t_0)\big)^{\frac{q-1}{q}} \\[0.2cm]
	   & \qquad \big(\text{by statement (ii)}\big) \\
	   & \qquad \leq
	   \|l\|_{L^1} + \|\mu\|_{L^q}\cdot(2M)^{\frac{q-1}{q}}.
	 \end{align*}
	 This is in contradiction with the choice of $L_M$ (see \eqref{eq.choiceL2}), 
	 hence $\mathcal{A}_x \leq L_M$ on $I$. Analogously, one can
	 prove that $\mathcal{A}_x\geq -L_M$ on $I$ 
	 and statement (iii) is completely proved. \medskip

	 \noindent (iv)\,\,From statement (iii) and assumption (A2) we immediately infer that
	 $$|x'(t)|\leq \frac{L_M}{a(t,x(t))} \leq \frac{L_M}{h(t)}=\gamma_0(t) 
	 \quad \text{for almost
	 every $t\in I$},$$
	 and the proof is finally complete.  
	\end{proof}
	By combining Propositions \ref{prop.existenceBVPtrunc} 
	and \ref{prop.equivalenceBVPs}, we can prove Theorem \ref{thm.mainconcrete}.
	\begin{proof} [Proof (of Theorem \ref{thm.mainconcrete})]
	 First of all, by Proposition \ref{prop.existenceBVPtrunc}, there exists
	 (at least) one solution $x\in W^{1,p}(I)$ of the ``truncated'' Dirichlet
	 problem \eqref{eq.BVPtrunc};
	 mo\-re\-o\-ver, by statements (i) and (iii)
	 of Proposition \ref{prop.equivalenceBVPs} (and the very definitions
	 of the operators $\mathcal{T}$ and $\mathcal{D}$), for almost every 
	 $t\in I$ we obtain
	 \begin{align*}
	  & \bigg(\Phi\Big(a\big(t,x(t)\big)\,x'(t)\Big)\bigg)'
	  = \bigg(\Phi\Big(a\big(t,\mathcal{T}(x)(t)\big)\,x'(t)\Big)\bigg)'
	  \\
	  & \qquad\qquad = 
	  f^*\Big(t, x(t), \mathcal{D}\big(\mathcal{T}(x)'(t)\big)\Big)  
	 = f(t,x(t),x'(t)).
	 \end{align*}
	 Thus, $x$ is actually a solution of the Dirichlet problem \eqref{eq.mainBVPconcrete}.
	 To complete the demonstration of the theorem, we show that $x$ satisfies 
	 \eqref{eq.xbetweenUL} and
	\eqref{eq.estimsolML}. 
	
	As for \eqref{eq.xbetweenUL}, it is precisely
	statement (i) of Proposition \ref{prop.equivalenceBVPs};
	estimate \eqref{eq.estimsolML}, instead, follows from
	 statements (ii) and (iii) of the same proposition.  
	\end{proof}
	\medskip
 \paragraph{Some examples.}
 We close the section with a few illustrating examples, 
 in which we consider a generic function $a(t,x)$ satisfying assumption (A2).
 We explicitly point out that (A2) is verified, e.g., in the following special cases:
 \begin{itemize}
 \item[{(1.)}] when $a(t,x)$ has a product structure
	$$a(t,x)=h(t)\cdot b(x),$$
	where $h:I\to\R$ is a continuous non-negative function on $I$ such that 
 	$1/h$ is in  $L^p(I)$ (for some $p> 1$) and
 	$b\in C(\R)$ is such that $\inf_\R b > 0$; \vspace*{0.08cm}
 \item[(2.)] when $a(t,x)$ is a sum
 $$a(t,x)=h(t)+b(x),$$
	where $h:I\to\R$ is a continuous non-negative function on $I$ such that 
 	$1/h$ is in $L^p(I)$ (for some $p> 1$) and
 	$b\in C(\R,\R)$ is non-negative.
\end{itemize} 
\noindent 
In the next Example \ref{example1}, the growth of the right-hand side $f$ 
with respect the variable $y$ is linear, and this allows
the choice $\psi \equiv 1$  in the Wintner-Nagumo condition \eqref{eq.Nagumocondition}.
Thus, condition \eqref{eq.Nagumocondition} does not require any relation 
among the differential operator $\Phi$, 
the function $a$ appearing inside $\Phi$, and  $f$.

\begin{ex} \label{example1}
Let us consider the Dirichlet problem
\begin{equation} \label{e:example1}
 \begin{cases} 
  \dsy\Big(\Phi\big(a(t,x(t))\,x'(t)\big)\Big)' = 
  \sigma(t)(x(t)+ \rho(t)) + g(x(t))\,x'(t) \\[0.2cm] 
  x(0) = \nu_1,\,\,x(T) = \nu_2,
  \end{cases}
 \end{equation}
 where $\varphi,\,a,\,\sigma,\,\rho$ and $g$ satisfy the following assumptions: \medskip
 
 $(\star)$\,\,$\Phi:\R \to \R$ is a generic strictly increasing homeomorphism; \medskip
 
 $(\star)$\,\,$a\in C(I\times\R,\R)$ satisfies assumption (A2); \medskip
 
 $(\star)$\,\,$\sigma \in L^1(I)$ and $\sigma\geq 0$ a.e.\,on $I$; \medskip
 
 $(\star$)\,\,$\rho\in C(I)$ and $g\in C(\R,\R)$ are generic. \medskip

\noindent We aim to show that our Theorem \ref{thm.mainconcrete} can be applied to
	problem \eqref{e:example1}.
  To this end, we consider the function $f$ defined as follows:
 $$f:I\times\R^2\to\R, \qquad f(t,x,y):= \sigma(t)(x+\rho(t))+ g(x)y.$$
 Obviously, $f$ is a Carath\'eodory function; moreover, it is very
 easy to recognize that $f$ satisfies assumption (A2)'. 
 Indeed, let $R > 0$ be arbitrarily fixed and let $\gamma$ be a non-negative
 function in $L^1(I)$; setting 
 $$M_R := \max_{[-R,R]}|g|,$$ 
 we then have
 $$\big|f(t,x,y(t))\big| \leq \sigma(t)\,\big(R+|\rho(t)|\big) + 
  M_R\cdot \gamma(t)=: h_{R,\gamma}(t),$$
 for every $x\in\R$ with $|x|\leq R$ and every $y\in L^1(I)$ 
 satisfying
 $|y(t)|\leq \gamma(t)$ for almost every $t\in I$. 
 Since the function
 $h_{R,\gamma}$ is non-negative and belongs to $\in L^1(I)$
 (by the assumptions on $\sigma,\,\rho$ and $\gamma$), we conclude that
 $f$ fulfills \eqref{eq.estimfh} in assumption (A2)', as claimed. 
 
  We now observe that, setting $N := \max_{I} |\rho|$, the constant functions
  $$\alpha(t) := -N \qquad
  \beta(t) := N \qquad \big(\text{for $t\in I$}$$
  are, respectively, a lower and a upper solution of
  \eqref{eq.maiODEtocite} such that
  $\alpha\leq \beta$ on $I$; hence,
  assumption (A1)' is satisfied. Furthermore,
  since we have
  $$|f(t,x,y)|\,\leq\,(2 N)\,\sigma(t)+ \Big(\max_{x\in [-N,N]}|g(x)|\Big)\cdot |y|$$
  for every $t\in I$, every
  $|x|\leq N$ and every $y\in\R$, we conclude that $f$
  also satisfies assumption (A3)' with the choice
  (here, $M_N := \max_{[-N,N]}|g|$)
  $$H := 1, \quad \psi \equiv 1, \quad l(t) := 2 N\,\sigma(t), \quad
  \mu(t):= M_N\ \quad \text{and} \quad q=\infty.$$
  We are then entitled to apply Theorem \ref{thm.mainconcrete}, which ensures that
  there exists (at least) one solution of problem \eqref{e:example1}
  for every fixed $\nu_1,\,\nu_2\in [-N,N]$.
\end{ex}
 In the next Example 
 \ref{exm.rLapl} we provide an application of Theorem \ref{thm.mainconcrete} 
 for a rather general right-hand side, with possible superlinear g
 rowth wrt $u'$.
\begin{ex} \label{exm.rLapl}
 Let us consider the following Dirichlet problem
 \begin{equation} \label{e:example3}
  \begin{cases} 
  \Big(\Phi_r\big(a(t,x(t))\,x'(t)\big)\Big)' = \sigma(t)\cdot g(x(t))\cdot|x'(t)|^{\delta} \\[0.2cm]
   u(0) = \nu_1,\,\,u(T) = \nu_2,
  \end{cases} 
 \end{equation}
 where $\Phi_r,\,a,\,\sigma,\,g$ and the exponent $\delta$ satisfy the following
 assumptions: \medskip
 
 $(\star)$\,\,$\Phi_r:\R \to \R$ is the standard $r$-Laplacian, that is, 
 $$\Phi_r(\xi):= |\xi|^{r-2}\cdot\xi
 \qquad \big(\text{for a suitable $r > 1$}\big);$$
 
 $(\star)$\,\,$a\in C(I\times\R,\R)$  satisfies assumption (A2), that is, 
 \begin{flalign}
  & \qquad\qquad 
  \bullet\,\,\text{$h \geq 0$ on $I$ and $1/h\in L^p(I)$ (for some $p > 1$)}; && \nonumber \\[0.15cm]
  & \qquad\qquad 
  \bullet\,\,\text{$a(t,x) \geq h(t)$ for every $t\in I$ and every $x\in\R$}. \nonumber
 \end{flalign}
 
 $(\star)$\,\,$\sigma \in L^\tau(I)$  for a suitable $\tau > 1$ satisfying the relation
 \begin{equation} \label{e:alphabeta2}
  \frac{1}{\tau} + \frac{r-1}{p} < 1;
 \end{equation}  
 
 $(\star)$\,\,$g\in C(\R,\R)$ is a generic function; \medskip
 
 $(\star)$\,\,$\delta$ is a positive real constant satisfying the relation
 \begin{align} \label{e:alphabeta1}
  \delta \leq 1-\frac{1}{\tau} + (r-1)\,\left(1-\frac{1}{p}\right).
 \end{align}  
 We aim to show that our Theorem \ref{thm.mainconcrete} can be applied
 to problem \eqref{exm.rLapl}.
 To this end, we consider the function $f$ defined as follows:
 $$f:I\times\R^2\to\R, \qquad f(t,x,y):= \sigma(t)\cdot g(x)\cdot |y|^\delta.$$
 Obviously, $f$ is a Carath\'eodory function; moreover, it is not difficult
 to recognize that $f$ satisfies assumption (A2)'. 
 Indeed, let $R > 0$ be ar\-bi\-tra\-ri\-ly fixed and let $\gamma$ be a non-negative
 function in $L^1(I)$; setting 
 $$M_R := \max_{[-R,R]}|g|,$$ 
 we then have
 $$\big|f(t,x,y(t))\big| \leq M_R\cdot|\sigma(t)|\cdot(\gamma(t))^{\delta} =: h_{R,\gamma}(t)$$
 for every $|x|\leq R$ and every $y\in L^1(I)$ such that $|y(t)|\leq \gamma(t)$
 a.e.\,on $I$. Now, by combining \eqref{e:alphabeta2} with \eqref{e:alphabeta1} we readily
 see that
 \begin{equation} \label{ab2}
	\delta< \left(1-\frac1\tau\right)p;
  \end{equation}
  from this, by H\"older's inequality
  (and the assumptions on $\sigma$ and $\gamma$), we infer that
  $h_{R,\gamma}$ (which is non-negative on $I$) belongs to $L^1(I)$, whence
  $f$ satisfies (A2)'.
  In order to prove that also assumptions (A1)' and (A3)' are satisfied
  we first notice that, if $N > 0$ is \emph{arbitrary},
  the constant functions
  $$\alpha(t) := -N \qquad
  \beta(t) := N \qquad \big(\text{for $t\in I$}$$
  are, respectively, a lower and a upper solution of
  \eqref{eq.maiODEtocite} such that
  $\alpha\leq \beta$ on $I$; hence,
  assumption (A1)' is fulfilled. Moreover, by 
  \eqref{e:alphabeta1} we have
  $$\delta \leq (r-1) + \frac{q-1}{q},
  \quad \text{where}\,\,q:=\frac{\tau\,p}{p+\tau\,(r-1)} > 1.$$
  From this, setting 
  $M_N:={\max_{[-N,N]}|g|}$, we obtain
  \begin{align*}
   \big|f(t,x,y)\big| & \leq M_N\cdot|\sigma(t)|\cdot|y|^{\delta} \leq M_N\cdot|\sigma(t)|\cdot|y|^{r-1}
   \cdot |y|^\frac{q-1}{q} \\[0.2cm]
  &  = \big|\Phi(a(t,x)y)\big|\cdot  
  \bigg(\frac{M_N\cdot|\sigma(t)|}{(a(t,x))^{r-1}}\bigg)\,|y|^{\frac{q-1}{q}} \\[0.2cm]
  & \big(\text{since $a(t,x)\geq h(t)$ for every $(t,x)\in I\times\R$}\big) \\[0.2cm]
   & \leq 
   \big|\Phi(a(t,x)y)\big|\cdot
  \bigg(\frac{M_N\cdot|\sigma(t)|}{(h(t))^{r-1}}\bigg)\,|y|^{\frac{q-1}{q}}  
  \end{align*}
  for a.e.\,$t\in I$, every $x\in\R$ with
  $|x|\leq N$ and every $y\in\R$ satisfying $|y|\geq 1$. Thus, if we are able to prove that
  \begin{equation} \label{eq.muLqrLapltoprove}
   t\mapsto\frac{|\sigma(t)|}{(h(t))^{r-1}} \in L^q(I),
   \end{equation}
  we can conclude that $f$ satisfies assumption (A3)' with the choice
  $$H := 1, \quad \psi(s) := s, \quad l(t) := 0, \quad
  \mu(t):= \frac{M_N\cdot|\sigma(t)|}{(h(t))^{r-1}}$$
  and $q$ as above. On the other hand, the needed
  \eqref{eq.muLqrLapltoprove} is an easy consequence of
  H\"older's inequality, assumption \eqref{e:alphabeta2} and of the fact
  that $1/h\in L^p(I)$.
  
  We are then entitled to apply Theorem \ref{thm.mainconcrete}, which ensures the existence
  of (at least) one solution of the Dirichlet
  problem \eqref{e:example3} \emph{for every $\nu_1\,\nu_2\in\R$}.
\end{ex}

	\section{General nonlinear boundary conditions} \label{sec:general}
	The main aim of this last section is to prove the solvability
	of \emph{general boundary value problems}
	associated with the (possibly singular) differential equation
	\begin{equation} \label{eq.generalODEgeneralBVP}
	 \dsy\Big(\Phi\big(a(t,x(t))\,x'(t)\big)\Big)' = f(t,x(t),x'(t)), \qquad \text{a.e.\,on $I$}.
	 \end{equation}	
	 (here, $\Phi,\,a$ and $f$ satisfy the assumptions (A1)-to-(A3)
	introduced in Section \ref{sec:concrete}).
	As a particular case, we shall obtain existence
	results for \emph{periodic BVPs} and for \emph{Sturm-Liouville-type problems}
	(associated with \eqref{eq.generalODEgeneralBVP}). \medskip
	
	To be more precise, taking for fixed all the notation
	introduced so far, we aim to study
	the following general BVPs
	(associated with \eqref{eq.generalODEgeneralBVP}):
	\begin{equation} \label{eq.maingeneralBVPgh}
	 \begin{cases}
	 \dsy \dsy\Big(\Phi\big(a(t,x(t))\,x'(t)\big)\Big)' = 
	 f(t,x(t),x'(t)), \qquad \text{a.e.\,on $I$}, \\[0.2cm]
	 g(x(0),x(T),\mathcal{A}_x(0),\mathcal{A}_x(T)) = 0, \\[0.2cm]
	 x(T) = \rho(x(0)).
	 \end{cases}
	\end{equation}
	Here, $w:\R\to\R$ and $g:\R^4\to\R$ satisfy
	the following general assumptions:
	\begin{itemize}
	 \item[(G1)] $\rho\in C(\R,\R)$ and is increasing on $\R$; \medskip
	 \item[(G2)] $g\in C(\R^4,\R)$ and, for every fixed $u,v\in\R$, it holds that \medskip
	 \begin{itemize}
	  \item[$\mathrm{(G2)}_1$] $g(u,v,\cdot,z)$ is increasing for every fixed $z\in\R$; \vspace*{0.15cm}
	  \item[$\mathrm{(G2)}_2$]	$g(u,v,w,\cdot)$ is decreasing for every fixed $w\in\R$.
	 \end{itemize}
	\end{itemize}
	We now state one of the main existence results of this section.
	\begin{thm} \label{thm.maingeneralBVP}
	Let us assume that \emph{all} the hypotheses of Theorem \ref{thm.mainconcrete} are
	satisfied, and that $g$ and $h$ fulfill the assumptions \emph{(G1)-(G2)}
	introduced above.	
	Moreover, if $\alpha,\beta \in W^{1,p}(I)$ are as in assumption \emph{(A1')}, we suppose that
	\begin{equation} \label{eq.assumptionlowerupper}
	 \begin{cases}
	 g(\alpha(0),\alpha(T),\mathcal{A}_\alpha(0),\mathcal{A}_\alpha(T)) \geq 0, \\[0.2cm]
	 \alpha(T) = \rho(\alpha(0))
	 \end{cases}\,\,
	 \begin{cases}
	 g(\beta(0),\beta(T),\mathcal{A}_\beta(0),\mathcal{A}_\beta(T)) \leq 0, \\[0.2cm]
	 \beta(T) = \rho(\beta(0)).
	 \end{cases}
	\end{equation}
	Finally, let us assume that the function
	$a$ satisfies the following
	condition:
	$$a(0,x) \neq 0 \quad \text{and} \quad a(T,x) \neq 0 \qquad \text{for every $x\in\R$}.$$
	Then the problem \eqref{eq.maingeneralBVPgh}
	possesses one solution $x\in W^{1,p}(I)$ such that
	\begin{equation} \label{eq.solutionxfunctionalinterval}
	 \alpha(t)\leq x(t)\leq\beta(t) \qquad\text{for every $t\in I$}.
	\end{equation}
	Furthermore, if $M > 0$ is any real number
	such that $\sup_I|\alpha|,\,\sup_I|\beta|\leq M$
	and $L_M > 0$ \emph{is as in Theorem \ref{thm.mainconcrete} (see \eqref{eq.choiceL2})},
    one has
   \begin{align}
    \max_{t\in I}\big|x(t)\big|\leq M \quad \text{\emph{and}} \quad
    \max_{t\in I}\big|\mathcal{A}_x(t)\big| \leq L_M. \label{eq.estimsolMLgeneralgh}
   \end{align}
	\end{thm}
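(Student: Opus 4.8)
The strategy is to reduce the general boundary value problem \eqref{eq.maingeneralBVPgh} to the Dirichlet problem \eqref{eq.mainBVPconcrete}, for which Theorem \ref{thm.mainconcrete} already provides a solution with the desired a priori bounds. The key observation is that the second boundary condition $x(T) = \rho(x(0))$ couples the two endpoints through the increasing homeomorphism-like map $\rho$, so I would parametrize candidate boundary data by the single value $\nu := x(0)$ and set $x(T) := \rho(\nu)$. For each such $\nu$ lying in the admissible range, Theorem \ref{thm.mainconcrete} yields a solution $x_\nu \in W^{1,p}(I)$ of the Dirichlet problem with data $(\nu, \rho(\nu))$, automatically satisfying $\alpha \leq x_\nu \leq \beta$ together with the bounds \eqref{eq.estimsolMLgeneralgh}. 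The remaining task is to select a value of $\nu$ for which the first boundary condition $g(x(0), x(T), \mathcal{A}_x(0), \mathcal{A}_x(T)) = 0$ also holds.

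\textbf{Choosing the free parameter via a shooting/continuity argument.} First I would verify that the admissible range for $\nu$ is the interval $[\alpha(0), \beta(0)]$: since $\rho$ is increasing (by (G1)) and $\alpha(T) = \rho(\alpha(0))$, $\beta(T) = \rho(\beta(0))$ by \eqref{eq.assumptionlowerupper}, the choice $\nu \in [\alpha(0), \beta(0)]$ forces $\rho(\nu) \in [\alpha(T), \beta(T)]$, so the hypotheses of Theorem \ref{thm.mainconcrete} are met for every such $\nu$. Next, define the auxiliary real function
\begin{equation*}
 G(\nu) := g\big(x_\nu(0), x_\nu(T), \mathcal{A}_{x_\nu}(0), \mathcal{A}_{x_\nu}(T)\big) = g\big(\nu, \rho(\nu), \mathcal{A}_{x_\nu}(0), \mathcal{A}_{x_\nu}(T)\big),
\end{equation*}
and the plan is to show $G$ changes sign (or vanishes) on $[\alpha(0), \beta(0)]$, so that a zero $\nu^*$ furnishes the desired solution. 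The sign information comes from \eqref{eq.assumptionlowerupper}: at the endpoint $\nu = \alpha(0)$ the solution is sandwiched as $\alpha \leq x_{\alpha(0)}$ with matching boundary value $x_{\alpha(0)}(0) = \alpha(0)$, and the monotonicity properties $(\mathrm{G2})_1$–$(\mathrm{G2})_2$ together with the comparison of the quantities $\mathcal{A}_{x}$ against $\mathcal{A}_\alpha, \mathcal{A}_\beta$ at the endpoints should force $G(\alpha(0)) \geq 0$ and $G(\beta(0)) \leq 0$.

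\textbf{The main obstacle.} The hard part will be establishing continuity of the map $\nu \mapsto x_\nu$ (hence of $G$) \emph{and}, more delicately, controlling the boundary values $\mathcal{A}_{x_\nu}(0), \mathcal{A}_{x_\nu}(T)$ of the auxiliary flux. Theorem \ref{thm.mainconcrete} only asserts \emph{existence}, not uniqueness, of $x_\nu$, so $\nu \mapsto x_\nu$ need not be single-valued or continuous. I would circumvent this by working directly with the fixed-point operator $\mathcal{P}$ from Section \ref{sec:abstractset} and treating $\nu$ as an additional parameter, so that the full map $(\nu, x) \mapsto \mathcal{P}(x)$ is jointly continuous and compact; one then appeals to a topological degree or a continuation argument (rather than a bare intermediate value theorem) to produce a pair $(\nu^*, x^*)$ solving both conditions simultaneously. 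The requirement $a(0,x) \neq 0$ and $a(T,x) \neq 0$ is precisely what guarantees that $\mathcal{A}_{x_\nu}(0) = a(0, x_\nu(0))\, x_\nu'(0)$ and $\mathcal{A}_{x_\nu}(T)$ are well-controlled (the flux does not degenerate at the endpoints where the boundary functional $g$ is evaluated), so I expect this nonvanishing hypothesis to be the technical linchpin that makes the endpoint estimates on $G$ rigorous. Once the sign change of $G$ and the compactness are in place, the bounds \eqref{eq.solutionxfunctionalinterval} and \eqref{eq.estimsolMLgeneralgh} are inherited verbatim from Theorem \ref{thm.mainconcrete}.
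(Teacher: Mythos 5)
Your reduction is the right one — parametrize the boundary data by $\nu = x(0)$, set $x(T) = \rho(\nu)$, apply Theorem \ref{thm.mainconcrete} for each $\nu\in[\alpha(0),\beta(0)]$, and look for a $\nu$ at which the functional $g$ vanishes — and you correctly identify the central obstacle: Theorem \ref{thm.mainconcrete} gives no uniqueness, so $\nu\mapsto x_\nu$ is not a well-defined continuous map and a bare intermediate value theorem does not apply. But your proposed resolution (recast everything as a jointly continuous compact operator in $(\nu,x)$ and invoke topological degree or a continuation argument) is only gestured at, and it is precisely where the work lies: the boundary functional involves $\mathcal{A}_{x}(0)$ and $\mathcal{A}_{x}(T)$, which are not continuous functionals of $x$ in any norm in which the solution set is obviously compact, so setting up a degree with the right admissibility and computing it is a substantial unproved step, not a routine appeal. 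The paper resolves the same obstacle differently and concretely: it introduces the set $V$ of admissible $\nu$ for which \emph{some} solution $x_\nu$ satisfies $g(x_\nu(0),x_\nu(T),\mathcal{A}_{x_\nu}(0),\mathcal{A}_{x_\nu}(T))\geq 0$, shows $\overline{\nu}:=\sup V$ belongs to $V$ via a dedicated compactness result (Proposition \ref{prop.compactness}, which upgrades the a priori bounds $(\ast)$ into pointwise convergence of both $x_n$ and $\mathcal{A}_{x_n}$ along a subsequence, together with convergence to a genuine solution), and then approaches $\overline{\nu}$ from above with solutions $u_m$ squeezed between $x_{\overline{\nu}}$ and $\beta$, for which necessarily $g<0$; passing to the limit and comparing yields $g=0$. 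None of this machinery appears in your sketch.

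A second, smaller gap: you assert that the monotonicity of $g$ ``should force'' $G(\alpha(0))\geq 0$ and $G(\beta(0))\leq 0$, but this requires comparing $\mathcal{A}_{x_\nu}$ with $\mathcal{A}_\alpha$ (resp.\ $\mathcal{A}_\beta$) at the endpoints where the two ordered functions coincide. That is exactly the content of Lemma \ref{lem.technicalAx} (if $\alpha(0)=\beta(0)$ and $\alpha\leq\beta$ then $\mathcal{A}_\alpha(0)\leq\mathcal{A}_\beta(0)$, and the reversed inequality at $t=T$), whose proof is where the hypothesis $a(0,x)\neq 0$, $a(T,x)\neq 0$ actually enters — it lets one write $\alpha'=\mathcal{A}_\alpha/a(\cdot,\alpha)$ near the endpoint and derive a contradiction from an ordering violation. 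Your intuition that this nonvanishing hypothesis is the ``technical linchpin'' is correct in spirit, but the comparison statement it underwrites is neither stated nor proved in your proposal, and without it the endpoint sign conditions are unsupported.
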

	The basic idea behind the proof of Theorem \ref{thm.maingeneralBVP}, inspired by the work
	of \cite{CabadaPo} and already exploited by \cite{MaPa2017}, is
	to think of the boundary value problem \eqref{eq.maingeneralBVPgh} as a ``superposition''
	of Dirichlet problems to which our existence result in Theorem \ref{thm.mainconcrete} apply.
	Following this approach, we first establish a com\-pac\-tness\--type result
	for the solutions of the ODE \eqref{eq.generalODEgeneralBVP}.
	\begin{prop} \label{prop.compactness}
     For every $n\in\N$, let $x_n\in W^{1,p}(I)$
	 be a solution of 
	 \begin{equation} \label{eq.equationsolvesequence}
	  \Big(\Phi\big(a(t,x(t))\,x'(t)\big)\Big)' = f(t,x(t),x'(t)), \qquad \text{a.e.\,on $I$}.
	 \end{equation}
	 We assume that, together with \emph{(A1)}-to-\emph{(A3)}, $f$ satisfies assumption
	 $\mathrm{(A2')}$ of Theo\-rem \ref{thm.mainconcrete}; moreover, we suppose
	  that there exist $M,L > 0$ such that
	 \begin{equation} \label{eq.assumptionxAxsequence}
	 \sup_I|x_n|\leq M \quad \text{and} \quad
	 \sup_I|\mathcal{A}_{x_n}| \leq L \qquad \text{for every $n\in\N$}.
	 \end{equation}
	 It is then possible to
	 find a sub-sequence $\{x_{n_k}\}_{k\in\N}$ of $\{x_n\}_{n\in\N}$ 
	 and a solution $x_0\in W^{1,p}(I)$ of the equation \eqref{eq.generalODEgeneralBVP}
	 with the following properties:
	 \begin{itemize}
	  \item[\emph{(1)}] $x_{n_k}(t)\to x_0(t)$ for every $t\in I$ as $n\to\infty$;
	  \item[\emph{(2)}] $\mathcal{A}_{x_{n_k}}(t)\to\mathcal{A}_{x_0}(t)$ for every $t\in I$ as $n\to\infty$.
	 \end{itemize}
	\end{prop}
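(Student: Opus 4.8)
The plan is to derive every required convergence from a priori compactness estimates and then to pass to the limit inside the equation. First I would turn the uniform bounds \eqref{eq.assumptionxAxsequence} into a uniform $W^{1,p}$-bound: since $\mathcal{A}_{x_n}(t) = a(t,x_n(t))\,x_n'(t)$ and $a(t,x)\geq h(t)$, for a.e.\ $t$ one has $|x_n'(t)| \leq |\mathcal{A}_{x_n}(t)|/a(t,x_n(t)) \leq L/h(t)$, and since $1/h\in L^p(I)$ this gives $\|x_n'\|_{L^p}\leq L\,\|1/h\|_{L^p}$ for every $n$. Together with $\sup_I|x_n|\leq M$ this bounds $\{x_n\}$ in $W^{1,p}(I)$. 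Moreover $|x_n(t)-x_n(s)|\leq L\int_s^t \frac{1}{h(\tau)}\,\d\tau$, so $\{x_n\}$ is equicontinuous (the primitive of $1/h\in L^1(I)$ being uniformly continuous) and uniformly bounded; by the Ascoli-Arzel\`a theorem I extract a subsequence, still denoted $\{x_n\}$, with $x_n\to x_0$ uniformly on $I$ for some $x_0\in C(I,\R)$.

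Next I would set $w_n(t) := \Phi(\mathcal{A}_{x_n}(t))$, which belongs to $W^{1,1}(I)$ by the definition of solution, with $w_n'(t)=f(t,x_n(t),x_n'(t))$ a.e. The bound $|\mathcal{A}_{x_n}|\leq L$ gives $|w_n|\leq \max_{[-L,L]}|\Phi|$, while assumption (A2') applied with $R=M$ and $\gamma = L/h\in L^p(I)$ furnishes a \emph{single} $\Theta := h_{M,\gamma}\in L^p(I)\subseteq L^1(I)$ with $|w_n'(t)|=|f(t,x_n(t),x_n'(t))|\leq \Theta(t)$ for a.e.\ $t$ and every $n$. Hence $\{w_n\}$ is uniformly bounded and equicontinuous (its increments being controlled by the primitive of $\Theta\in L^1(I)$), and a further application of Ascoli-Arzel\`a produces a subsequence $\{x_{n_k}\}$ with $w_{n_k}\to w_0$ uniformly for some $w_0\in C(I,\R)$. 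Since $\Phi^{-1}$ is continuous, this immediately yields $\mathcal{A}_{x_{n_k}}=\Phi^{-1}(w_{n_k})\to \Phi^{-1}(w_0)=:\mathcal{A}_0$ uniformly on $I$.

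The core of the argument is to identify $x_0$ as a solution with $\mathcal{A}_{x_0}=\mathcal{A}_0$. From the structural identity $x_{n_k}'(t)=\Phi^{-1}(w_{n_k}(t))/a(t,x_{n_k}(t))$ (valid a.e., with $a(t,x_{n_k}(t))\geq h(t)>0$ a.e.), the uniform convergences of $w_{n_k}$ and $x_{n_k}$ together with the continuity of $a$ and of $\Phi^{-1}$ give $x_{n_k}'(t)\to \Phi^{-1}(w_0(t))/a(t,x_0(t))=:\phi(t)$ for a.e.\ $t$. Because $|x_{n_k}'|^p\leq (L/h)^p\in L^1(I)$, dominated convergence upgrades this to $x_{n_k}'\to\phi$ in $L^p(I)$; passing to the limit in $x_{n_k}(t)=x_{n_k}(0)+\int_0^t x_{n_k}'(s)\,\d s$ then shows $x_0\in W^{1,p}(I)$ with $x_0'=\phi$, that is $a(t,x_0(t))\,x_0'(t)=\Phi^{-1}(w_0(t))=\mathcal{A}_0(t)$ a.e. By uniqueness of the continuous representative (Remark \ref{rem.contax}) this means $\mathcal{A}_{x_0}=\mathcal{A}_0$, so $w_0=\Phi(\mathcal{A}_{x_0})\in W^{1,1}(I)$, and properties (1) and (2) of the statement follow (the latter for every $t\in I$, since $\mathcal{A}_{x_{n_k}}\to\mathcal{A}_{x_0}$ uniformly).

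Finally I would check that $x_0$ actually solves \eqref{eq.generalODEgeneralBVP}; it remains only to verify $w_0'=f(\cdot,x_0,x_0')$. Using that $f$ is Carath\'eodory and that $x_{n_k}\to x_0$, $x_{n_k}'\to x_0'$ a.e., one gets $f(t,x_{n_k}(t),x_{n_k}'(t))\to f(t,x_0(t),x_0'(t))$ a.e.; the domination $|f(t,x_{n_k},x_{n_k}')|\leq \Theta\in L^1(I)$ then yields $w_{n_k}'=f(\cdot,x_{n_k},x_{n_k}')\to f(\cdot,x_0,x_0')$ in $L^1(I)$, and passing to the limit in $w_{n_k}(t)=w_{n_k}(0)+\int_0^t w_{n_k}'(s)\,\d s$ identifies $w_0'=f(\cdot,x_0,x_0')$ a.e., which is exactly property (2) of Definition \ref{def.solconcrete}. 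The main obstacle is precisely this interchange of the limit with the nonlinearity $f$: it is where both the a.e.\ convergence of the \emph{derivatives} $x_{n_k}'$ (produced by the structural quotient and the lower bound $a\geq h$) and the uniform $L^1$-domination supplied by (A2') are indispensable.
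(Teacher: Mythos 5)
Your proof is correct, but it follows a genuinely different compactness route from the paper's. The paper works with the derivatives directly: it observes that $\{x_n'\}$ and $\{z_n\}=\{(\Phi(\mathcal{A}_{x_n}))'\}$ are equi-integrable in $L^1(I)$ (via the dominations $|x_n'|\leq L/h$ and $|z_n|\leq h_{M,\gamma}$), extracts weakly convergent subsequences by the Dunford--Pettis theorem, defines the candidate limits $x_0$ and $\mathcal{U}$ through the weak limits of the derivatives, and only afterwards identifies those weak limits using the same structural quotient $x_n'=\mathcal{A}_{x_n}/a(\cdot,x_n)$, the domination by $L/h$, and the Carath\'eodory property of $f$. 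You instead work with the primitives: the dominations by the \emph{fixed} $L^1$ functions $L/h$ and $\Theta=h_{M,\gamma}$ make $\{x_n\}$ and $\{w_n\}=\{\Phi(\mathcal{A}_{x_n})\}$ uniformly bounded and equicontinuous, so Ascoli--Arzel\`a gives strong compactness in $C(I,\R)$ and you never need weak $L^1$ topologies. The identification steps are then essentially the same in both arguments (a.e.\ convergence of $x_{n_k}'$ from the quotient and the lower bound $a\geq h>0$ a.e., dominated convergence to upgrade to $L^p$ and $L^1$, and passage to the limit in the integrated equation). Your route is more elementary and in fact delivers a slightly stronger conclusion --- \emph{uniform} convergence of $x_{n_k}$ and of $\mathcal{A}_{x_{n_k}}$, rather than the pointwise convergence stated; the paper's Dunford--Pettis scheme is the one that would survive if the bounds supplied only equi-integrability without a single dominating function, but that extra generality is not needed here.
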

	\begin{proof}
	 For every natural $n$, we set $z_n := \big(\Phi(\mathcal{A}_{x_n})\big)'$.
	 Since $x_n$ is a solution of \eqref{eq.equationsolvesequence}, by 
	 \eqref{eq.assumptionxAxsequence} and the fact that $f$
	 satisfies $\mathrm{(A2')}$, we have
	 (see also \eqref{eq.estimfh})
	 \begin{equation} \label{eq.touseattheend}
	  \big|z_n(t)\big| = \big|f(t,x_n(t),x'_n(t))\big|
	 \leq h_{M,\gamma}(t) \quad \text{for a.e.\,$t\in I$},
	 \end{equation}
	 where, $h_{M,\gamma}\in L^p(I)$ is the function appearing in assumption
	 $\mathrm{(A2')}$ and cor\-re\-spon\-ding to $M$ and $\gamma = L/h$.
	 Moreover, again by \eqref{eq.assumptionxAxsequence}, one has
	 \begin{equation} \label{eq.derxntouseLebesgue}
	  |x'_n(t)| \leq \frac{L}{h(t)} = \gamma(t) \quad \text{for a.e.\,$t\in I$}.
	  \end{equation}
	 As a consequence, both $\{z_n\}_n$ and $\{x'_n\}_n$ are 
	 \emph{uniformly integrable} in $L^1(I)$. 	
	 Then, by Dunford-Pettis Theorem (see, e.g., \cite{Brezis}),
	 there exist $v,w\in L^1(I)$ such that,
	 up to a sub-sequence,
	 $x'_n\rightharpoonup v$ and $z_n\rightharpoonup w$ in $L^1(I)$ as $n\to\infty$.
	 
	 Now, since the sequence $\{x_n(0)\}_n$ is bounded in $\R$ (again
	 by \eqref{eq.assumptionxAxsequence}), we can assume that
	 $x_n(0)$ converges to some $\nu_0\in\R$ as $n\to\infty$; from this,
	 reminding that $x'_n \rightharpoonup v$ in $L^1(I)$, we get
	 \begin{equation} \label{eq.xnconvergesx0lemma}
	  x_n(t) = x_n(0)+\int_0^tx'_n(s)\,\d s \underset{n\to\infty}{\longto} 
	  \nu_0 + \int_0^t v(s)\,\d s =: x_0(t)
	 \quad \text{$\forall\,t\in I$}.
	 \end{equation}
	 Notice that, by its very definition, $x_0$ satisfies the following properties: \medskip
	 
	 (a)\,\,$x_0$ is absolutely continuous on $I$ and $x_0' = v \in L^1(I)$; \medskip
	 
	 (b)\,\,$\sup_I|x_0| \leq M$ (this follows also from \eqref{eq.assumptionxAxsequence}) . \medskip
	 
	 \noindent Thus, to complete the demonstration, we 
	 need to prove that $x_0$ is a solution of
	 the equation
	 \eqref{eq.equationsolvesequence} and that $\mathcal{A}_{x_n}\to \mathcal{A}_{x_0}$
	 point-wise on $I$ as $n\to\infty$. \medskip

	First of all, since also the sequence $\{\mathcal{A}_{x_n}(0)\}_n$ is bounded
	 in $\R$ (again by \eqref{eq.assumptionxAxsequence}), we can suppose that
	 $\mathcal{A}_{x_n}(0)\to \nu'_0$ as $n\to\infty$ for some $\nu'_0\in\R$;
	 thus, since $z_n\rightharpoonup w$ in $L^1(I)$, we have
	 \begin{equation*}
	  \Phi\big(\mathcal{A}_{x_n}(t)\big) = \Phi\big(\mathcal{A}_{x_n}(0)\big)
	  + \int_0^t z'_n(s)\,\d s \underset{n\to\infty}{\longto} 
	  \Phi(\nu'_0)+\int_0^t w(s)\,\d s.
	 \end{equation*}
	 As a consequence, by the continuity of $\Phi^{-1}$, we obtain
	 \begin{equation}  \label{eq.Axnconvergessthlemma}
	  \mathcal{A}_{x_n}(t) 
	\underset{n\to\infty}{\longto} 	 
	  \Phi^{-1}\bigg(\Phi(\nu'_0) + \int_0^t w(s)\,\d s\bigg) =: \mathcal{U}(t)
	 \quad \text{for every $t\in I$}.
	 \end{equation}
	 Notice that, by definition, $\mathcal{U}\in C(I,\R)$ and it
	 satisfies \medskip
	 
	 $\mathrm{(a)}_1$\,\,$\Phi\circ\mathcal{U}$ is absolutely continuous
	 on $I$ and $(\Phi\circ\mathcal{U})' = w\in L^1(I)$; \medskip
	 
	 $\mathrm{(b)}_1$\,\,$\sup_I|\mathcal{U}| \leq L$ 
	 (this follows also from \eqref{eq.assumptionxAxsequence}) . \medskip
	 
	 \noindent Now, since $a$ is continuous on $I\times\R$, we derive from
	 \eqref{eq.xnconvergesx0lemma} that
	 $a(t,x_n(t))$ converges to $a(t,x_0(t))$ for every $t\in I$ as $n\to\infty$;
	 thus, the above \eqref{eq.Axnconvergessthlemma} 
	 (together with the fact that $a(t,x)\geq h(t) > 0$ for a.e.\,$t\in I$)
	 gives
	 \begin{equation} \label{eq.derxnconvergesderx}
	  x'_n(t) \underset{n\to\infty}{\longto} 
	 \frac{1}{a(t,x_0(t))}\,\mathcal{U}(t)
	 \quad \text{for a.e.\,$t\in I$}.
	 \end{equation}
	 Taking into account \eqref{eq.derxntouseLebesgue} and the fact that
	 $1/h\in L^p(I)$ (see assumption (A2)),
	 it is easy to recognize that 
	 $$x'_n\to \frac{\mathcal{U}}{a(\cdot,x_0(\cdot))}\qquad \text{also in $L^1(I)$};$$
	 on the other hand, since we already
	  know that $x'_n\rightharpoonup v$ in $L^1(I)$ as $n\to\infty$, we necessarily have
	 \begin{equation} \label{eq.gcoincidesUfraca}
	  v(t) = \frac{1}{a(t,x_0(t))}\,\mathcal{U}(t)	 \quad \text{a.e.\,on $I$}.
	  \end{equation}
	 From this, by reminding that $v = x_0'$ (see (a)), we infer that \medskip
	 
	 $(\star)$\,\,$x_0' = v\in L^p(I)$, whence $x_0\in W^{1,p}(I)$
	 (as $|{\mathcal{U}}/{a(\cdot,x_0(\cdot))}|\leq L/h$); \medskip
	 
	 $(\star)$\,\,$a(t,x_0)\,x'_0 = \mathcal{U}$ a.e.\,on $I$; \medskip
	 
	 $(\star)$\,\,$\Phi\circ\big(a(t,x_0)\,x_0'\big) =
	 \Phi \circ \mathcal{U} \in W^{1,1}(I)$
	 and (see $\mathrm{(a)_1}$)
	 $$\big(\Phi(a(t,x_0)\,x_0')\big)' = w.$$
	 We now turn to prove that $x_0$ solves the ODE
	 \eqref{eq.equationsolvesequence}. To this end we 
	 observe that, by \eqref{eq.derxnconvergesderx} and 
	 \eqref{eq.gcoincidesUfraca}, we have
	 $x_n'(t)\to v(t) = x_0'(t)$ for a.e.\,$t\in I$; as a consequence,
	 since $x_n$ is a solution of \eqref{eq.equationsolvesequence}
	 for every $n$ and $f$ is a Carath\'{e}odory function
	 (see (A3)), we obtain (remind that $x_n\to x_0$ point-wise on $I$)
	 $$z_n = \Big(\Phi\big(\mathcal{A}_{x_n}\big)\Big)'
	 = f(t,x_n(t),x_n'(t)) 
		\underset{n\to\infty}{\longto} 	  
	   f(t,x_0(t),x'_0(t)) \quad \text{for a.e.\,$t\in I$}.
	  $$
	  On the other hand, by \eqref{eq.touseattheend}, we have that
	  $z_n\to f(t,x_0(t),x_0'(t)$ also in $L^1(I)$; since we already know that $
	  z_n\rightharpoonup w$ in $L^1(I)$, we conclude that
	  $$\big(\Phi(a(t,x_0(t))\,x_0'(t))\big)' = w(t)
	  = f(t,x_0(t),x_0'(t)) \quad \text{for a.e.\,$t\in I$},$$
	  that is, $x_0$ is a solution of \eqref{eq.equationsolvesequence}.
	  Finally, since $\mathcal{U}$ is a continuous function on $I$ such that
	  $\mathcal{U} = a(t,x_0)\,x_0'$ a.e.\,on $I$, we have
	  $\mathcal{U} = \mathcal{A}_x$ on $I$ and, by \eqref{eq.Axnconvergessthlemma},
	  $$\mathcal{A}_{x_n}(t) 
		\underset{n\to\infty}{\longto} 	  
		\mathcal{A}_x(t) \quad \text{for every $t\in I$}.$$
		This ends the proof.  
	 \end{proof}
	 We also need the following technical lemma.
	 \begin{lem} \label{lem.technicalAx}
	  Let $\alpha,\,\beta\in W^{1,p}(I)$ be, respectively,
	  a lower and a upper solution of the equation \eqref{eq.generalODEgeneralBVP}
	  such that $\alpha\leq\beta$. Moreover,
	  let us assume that
	  $$a(0,x) \neq 0 \quad \text{and} \quad a(T,x) \neq 0 \qquad\text{for every $x\in\R$}.$$
	  Then the following facts hold true: 
	  \begin{itemize}
	   \item[\emph{(i)}] if $\alpha(0) = \beta(0)$, then $\mathcal{A}_\alpha(0) \leq \mathcal{A}_\beta(0)$;
	   \item[\emph{(ii)}] if $\alpha(T) = \beta(T)$, then $\mathcal{A}_\alpha(T)\geq\mathcal{A}_\beta(T)$.
	  \end{itemize}
	 \end{lem}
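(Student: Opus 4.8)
The plan is to prove statement (i) and then obtain (ii) by an entirely symmetric argument, so I focus on (i). The crucial observation is that the hypothesis $a(0,x)\neq 0$ for all $x$, combined with the non-negativity $a\geq h\geq 0$, forces $a(0,\cdot)>0$; in particular, since $\alpha(0)=\beta(0)$, one has $a(0,\alpha(0))=a(0,\beta(0))=:a_0>0$. By continuity of $a$ on $I\times\R$ and of $\alpha,\beta$ on $I$, I can then fix $\delta>0$ so small that both $t\mapsto a(t,\alpha(t))$ and $t\mapsto a(t,\beta(t))$ stay bounded away from $0$ on $[0,\delta]$. This is what unlocks the whole argument: on $[0,\delta]$ the identities $\mathcal{A}_\alpha(t)=a(t,\alpha(t))\,\alpha'(t)$ and $\mathcal{A}_\beta(t)=a(t,\beta(t))\,\beta'(t)$ (valid a.e.\ by Remark~\ref{rem.contax}) can be divided through, giving
$$\alpha'(t)=\frac{\mathcal{A}_\alpha(t)}{a(t,\alpha(t))},\qquad \beta'(t)=\frac{\mathcal{A}_\beta(t)}{a(t,\beta(t))}\qquad\text{for a.e.\ }t\in[0,\delta].$$
Since $\mathcal{A}_\alpha,\mathcal{A}_\beta$ are continuous (this is precisely condition (1) in Definition~\ref{def.lowerupper}, read through Remark~\ref{rem.contax}) and the denominators are continuous and non-vanishing on $[0,\delta]$, the right-hand sides are continuous; hence $\alpha$ and $\beta$ admit continuous representatives of their derivatives on $[0,\delta]$, i.e.\ they are genuinely $C^1$ there.

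With this in hand I would argue by contradiction. Suppose $\mathcal{A}_\alpha(0)>\mathcal{A}_\beta(0)$. Then the continuous function
$$g(t):=\beta'(t)-\alpha'(t)=\frac{\mathcal{A}_\beta(t)}{a(t,\beta(t))}-\frac{\mathcal{A}_\alpha(t)}{a(t,\alpha(t))}$$
satisfies $g(0)=\big(\mathcal{A}_\beta(0)-\mathcal{A}_\alpha(0)\big)/a_0<0$, so by continuity $g<0$ on some $[0,\delta']$ with $0<\delta'\leq\delta$. Integrating and using $\beta(0)=\alpha(0)$, I get $\beta(t)-\alpha(t)=\int_0^t g(s)\,\d s<0$ for every $t\in(0,\delta']$, which contradicts the assumption $\alpha\leq\beta$ on $I$. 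Therefore $\mathcal{A}_\alpha(0)\leq\mathcal{A}_\beta(0)$, proving (i). Notice that the differential inequalities defining lower/upper solutions are not actually needed here: only $\alpha\leq\beta$, the continuity of $\mathcal{A}_\alpha,\mathcal{A}_\beta$, and the non-vanishing of $a$ at the endpoint enter.

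For (ii) the roles of the endpoints are reversed: assuming $\alpha(T)=\beta(T)$ and, for contradiction, $\mathcal{A}_\alpha(T)<\mathcal{A}_\beta(T)$, the same continuous-representative argument on an interval $[T-\delta',T]$ makes $g=\beta'-\alpha'$ \emph{positive} near $T$, forcing $\beta(t)-\alpha(t)=-\int_t^T g(s)\,\d s<0$ for $t$ slightly less than $T$, again contradicting $\alpha\leq\beta$; this yields $\mathcal{A}_\alpha(T)\geq\mathcal{A}_\beta(T)$. The only delicate point --- and the place where the hypothesis $a(0,\cdot)\neq 0$, $a(T,\cdot)\neq 0$ is indispensable --- is the passage to continuous representatives of $\alpha'$ and $\beta'$ at the endpoints: away from the zeros of $a$ the derivatives need only be $L^p$ and could be unbounded, so a naive pointwise comparison of $\alpha'(0)$ and $\beta'(0)$ would be meaningless. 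I expect the rest to be routine.
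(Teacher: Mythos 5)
Your argument is correct and follows essentially the same route as the paper: divide $\mathcal{A}_\alpha$ and $\mathcal{A}_\beta$ by the (non-vanishing, continuous) coefficient near the endpoint to get continuous representatives of $\alpha'$ and $\beta'$, then derive a strict sign for $\beta'-\alpha'$ on a small interval and integrate to contradict $\alpha\leq\beta$. Your side remark that only $\alpha\leq\beta$, the continuity of $\mathcal{A}_\alpha,\mathcal{A}_\beta$, and the non-vanishing of $a$ at the endpoint are used (and not the lower/upper differential inequalities) is also consistent with the paper's proof.
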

	 \begin{proof}
	 We only prove statement (i), since (ii) is analogous.
	 \medskip
	 
	 First of all,
	 since both $a(0,\alpha(0))$ and $a(0,\beta(0))$ are different from $0$
	 (by as\-sum\-ption), it is possible to find $\delta > 0$ such that,
	 for a.e.\,$t\in[0,\delta]$, we have
	 $$\alpha'(t) = \frac{\mathcal{A}_{\alpha}(t)}{a(t,\alpha(t))}=:u_1(t) \quad
	   \text{and}\quad
	   \beta'(t) = \frac{\mathcal{A}_{\beta}(t)}{a(t,\beta(t))} =: u_2(t)$$
	 moreover, both $u_1$ and $u_2$ are continuous on $[0,\delta]$.
	 Let us now assume, by contradiction, that $\mathcal{A}_\alpha(0) > \mathcal{A}_\beta(0)$.
	 Since, by assumption, $\alpha(0) = \beta(0)$ (and $a$ is non-negative on $I\times\R$), 
	 there exists $\delta' < \delta$ such that
	 $$\alpha'(t) = u_1(t) > u_2(t) = \beta'(t) \quad \text{for a.e.\,$t\in[0,\delta']$};$$
	 thus, by integrating this inequality on $[0,\delta']$, we get
	 \begin{align*}
	 \alpha(t) & = \alpha(0) + \int_0^t\alpha'(s)\,\d s
	 = \beta(0) + \int_0^t\alpha'(s)\,\d s \\
	 & > \beta(0) + \int_0^t\beta'(s)\,\d s = \beta(t) \qquad (\text{for every $t\in[0,\delta']$},
	 \end{align*}
	 which contradicts the fact that $\alpha\leq \beta$ on $I$. This ends the proof.  
	 \end{proof}
   	  We can now prove Theorem
	 \ref{thm.maingeneralBVP}.
	 \begin{proof} [Proof (of Theorem \ref{thm.maingeneralBVP})] 
	  Let $\nu\in[\alpha(0),\beta(0)]$ be fixed. Since, by assumption,
	  $\rho$ is increasing on $\R$ and $\alpha,\beta$ satisfy \eqref{eq.assumptionlowerupper}, 
	  we have $\rho(\nu)\in [\alpha(T),\beta(T)]$; as a consequence,
	  by the existence result in Theorem \ref{thm.mainconcrete}, the Dirichlet problem
	  $$
	  (\mathrm{D}_{\nu}) \qquad\quad
	  \begin{cases}
	   \dsy\Big(\Phi\big(a(t,x(t))\,x'(t)\big)\Big)' = f(t,x(t),x'(t)), \qquad \text{a.e.\,on $I$}, \\[0.2cm]
	   x(0) = \nu,\,\,x(T) = \rho(\nu)
	  \end{cases}
	  $$ 
	  admits one solution $x_\nu$
	  such that $\alpha\leq x_\nu\leq \beta$ on $I$. 
	  Moreover, if $M > 0$
	  is such that $\sup_I|\alpha|,\,\sup_I|\beta|\leq M$
	  and $L_M > 0$ is as in Theorem \ref{thm.mainconcrete}, we have
	  $$(\ast) \qquad\quad
	  \text{$\sup_{t\in I}|x_\nu(t)|\leq M$ \quad and \quad 
	  $\sup_{t\in I}|\mathcal{A}_{x_\nu}(t)|\leq L_M$}.$$
       We then consider the following set:
	  \begin{align*}
	   & V := \Big\{\text{$\nu\in[\alpha(0),\beta(0)]$\,:\,\,$\exists$ a solution
	   $x_\nu\in W^{1,p}(I)$ of $(\mathrm{D}_\nu)$ s.t.\,$\alpha\leq x_\nu\leq\beta$,} \\
	   & \qquad\qquad\quad \text{$x_\nu$ satisfies $(\ast)$ and
	   $g(x_\nu(0),x_\nu(T),\mathcal{A}_{x_\nu}(0),\mathcal{A}_{x_\nu}(T))\geq 0$}\Big\}.
	  \end{align*}
	  
	   \textsc{Claim I:} We have $\nu := \alpha(0)\in V$.
	   In fact, by Theorem \ref{thm.mainconcrete}, there exists
	   a solution $x_{\nu}\in W^{1,p}(I)$ 
	   of $(\mathrm{D}_{\nu})$ such that 
	   $$\text{$\alpha\leq x_{\nu}\leq \beta$ on $I$},$$
	   and satisfying $(\ast)$; in particular, we have
	   $$\text{$x_{\nu}(0) = \nu = \alpha(0)$.} $$
	   From this, 
	   by applying Lemma
	   \ref{lem.technicalAx} with $x_\nu$ in place of $\beta$
	   (notice that, $x_\nu$ be\-ing a solution of $(\mathrm{D}_{\nu})$,
	   it is also a upper solution of \eqref{eq.generalODEgeneralBVP}), we get
	   $$\mathcal{A}_\alpha(0)\leq \mathcal{A}_{x_\nu}(0).$$
	   Analogously, since we have (remind that $\alpha$ satisfies \eqref{eq.assumptionlowerupper})
	   $$\text{$x_{\nu}(T) = \rho(\nu)
	   = \rho(\alpha(0)) = \alpha(T)$},$$
	   again by Lemma \ref{lem.technicalAx} (with $\beta$ replaced by $x_{\nu}$) we have
	   $$\mathcal{A}_{\alpha}(T)\geq \mathcal{A}_{x_{\nu}}(T).$$
	   Thus, by
	   \eqref{eq.assumptionlowerupper} and
	    assumption (G2) we obtain
	   $$g(x_\nu(0),x_\nu(T),\mathcal{A}_{x_\nu}(0),\mathcal{A}_{x_\nu}(T))
	   \geq g(\alpha(0),\alpha(T),\mathcal{A}_\alpha(0),\mathcal{A}_\alpha(T))\geq 0.$$
	   This proves that $\nu\in V$, as claimed. In particular, $V\neq \varnothing$. \medskip
	   
	   \textsc{Claim II:} If $\overline{\nu} := \sup V$, we have $\overline{\nu}\in V$.
	   In fact, if $\overline{\nu} = \alpha(0)$, we have already proved
	   in Claim I that $\overline{\nu}\in V$; if, instead, $\overline{\nu} > \alpha(0)$,
	   we choose a sequence $\{\nu_n\}_n\subseteq V$ such that
	   $\nu_n\to\overline{\nu}$ as $n\to\infty$ and $\nu_n\leq\overline{\nu}$ for every $n$.
	   Since $\{\nu_n\}_n\subseteq V$, there exists a solution
	   $x_n\in W^{1,p}(I)$ of $\mathrm{(D_{\nu_n})}$ s.t. \medskip
	   
	   (a)\,\,$\alpha\leq x_n\leq\beta$ on $I$; \medskip
	   
	   (b)\,\,$x_n$ satisfies $(\ast)$; \medskip
	   
	   (c)\,\,$g(x_n(0),x_n(T),\mathcal{A}_{x_n}(0),\mathcal{A}_{x_n}(T))\geq 0$. \medskip
	   
	   On account of (b) we can apply Proposition \ref{prop.compactness},
	   which provides us with a solution $x_0\in W^{1,p}(I)$ of \eqref{eq.generalODEgeneralBVP}
	   such that (up to a sub-sequence)
	   $$x_n(t)\to x_0(t) \quad \text{and}
	   \quad \mathcal{A}_{x_n}(t)\to\mathcal{A}_{x_0}(t) \qquad \text{for every $t\in I$}.$$
	   Now, since $\nu_n\to\overline{\nu}$ and $\rho$ is continuous, it is readily
	   seen that $x_0$ is a solution of $\mathrm{(D_{\overline{\nu}})}$;
	   moreover, since $x_n$ satisfies $(\ast)$ 
	   and $\alpha\leq x_n\leq\beta$ on $I$ for every $n\in\N$,
	   then the same is true of $x_0$. Finally, by (c) and the continuity of
	   the function $g$ on whole of $\R^4$ (see assumption (G2))
	   we conclude that
	   \begin{equation*}
	    \begin{split}
	    & g(x_0(0),x_0(T),\mathcal{A}_{x_0}(0),\mathcal{A}_{x_0}(T)) \\[0.2cm]
	   & \quad = \lim_{n\to\infty}g(x_n(0),x_n(T),\mathcal{A}_{x_n}(0),\mathcal{A}_{x_n}(T)) \geq 0,
	   \end{split}
	   \end{equation*}
		and this proves that $\overline{\nu}\in V$. \medskip
		
	  \noindent  With Claims I and II at hand, we now prove
	  the existence of a solution
	  for \eqref{eq.maingeneralBVPgh}. In fact, let $\overline{\nu} = \sup V
	  \in [\alpha(0),\beta(0)]$
	  and let $x_{\overline{\nu}}\in W^{1,p}(I)$ be a cor\-re\-spon\-ding 
	  so\-lu\-tion
	  of $\mathrm{(D_{\overline{\nu}})}$ satisfying $(\star)$ and such that
	  \begin{itemize}
	   \item[(i)] $\alpha\leq x_{\overline{\nu}}\leq \beta$ on $I$;
	   \item[(ii)] $g(x_{\overline{\nu}}(0),x_{\overline{\nu}}(T),
	   \mathcal{A}_{x_{\overline{\nu}}}(0),\mathcal{A}_{x_{\overline{\nu}}}(T))\geq 0$.
	  \end{itemize}
	  If $\overline{\nu} = \beta(0)$, we have $x_{\overline{\nu}}(0) = \beta(0)$ and
	  (by \eqref{eq.assumptionlowerupper})
	  $$x_{\overline{\nu}}(T) = 
	  \rho(\beta(0)) = \beta(T);$$ 
	  on the other hand,
      since we also know that $x_0\leq \beta$ on $I$, 
	  from Lemma \ref{lem.technicalAx}
	  (with $\alpha$ replaced by $x_{\overline{\nu}}$, which is a solution
	  of $\mathrm{(D_{\overline{\nu}})}$) 
	  we infer that
	  $$\text{$\mathcal{A}_{x_{\overline{\nu}}}(0)\leq\mathcal{A}_\beta(0)$
	  and $\mathcal{A}_{x_{\overline{\nu}}}(T)\geq\mathcal{A}_\beta(T)$}.$$ 
	  Hence,
	  by (ii), the monotonicity of $g$ (see (G2)), and \eqref{eq.assumptionlowerupper}
	  we obtain
	  \begin{align*}
	   & 0 \leq g(x_{\overline{\nu}}(0),x_{\overline{\nu}}(T),
	   \mathcal{A}_{x_{\overline{\nu}}}(0),\mathcal{A}_{x_{\overline{\nu}}}(T))
	   = g(\beta(0),\beta(T),\mathcal{A}_{x_{\overline{\nu}}}(0),
	   \mathcal{A}_{x_{\overline{\nu}}}(T)) \\[0.1cm]
	   & \qquad\qquad \leq g(\beta(0),\beta(T),\mathcal{A}_{\beta}(0),\mathcal{A}_{\beta}(T)) \leq 0,
	  \end{align*}
	  and this proves that $x_{\overline{\nu}}$ is a solution of \eqref{eq.maingeneralBVPgh}
	  satisfying \eqref{eq.solutionxfunctionalinterval}
	  and \eqref{eq.estimsolMLgeneralgh}. \medskip
	  
	  If, instead $\overline{\nu} < \beta(0)$, we choose a sequence $\{\mu_m\}_m\subseteq[\alpha(0),\beta(0)]$
	  such that $\mu_m\to\overline{\nu}$ as $m\to\infty$
	  and $\mu_m > \overline{\nu}$ for every $m$.
	  Since $x_{\overline{\nu}}$ 
	  is a solution of $\mathrm{(D_{\overline{\nu}})}$ satisfying
	  (i)-(ii) above, we can think of $x_{\overline{\nu}}$ 
	  and $\beta$ as, respectively,
	  a lower and a upper solution of \eqref{eq.generalODEgeneralBVP} satisfying
	  (A1') in Theorem \ref{thm.mainconcrete}; moreover, by the very
	  choice of $M > 0$ we also have that
	  $$\sup_{t\in I}|x_{\overline{\nu}}(t)|,\,\,\sup_{t\in I}|\beta(t)|\leq M.$$
	  Hence, for every $m$ there exists a solution
	  $u_m$ of $\mathrm{(D_{\mu_m})}$ such that \medskip
	  
	  $\bullet$\,\,$\alpha\leq x_{\overline{\nu}}
	  \leq u_m\leq\beta$ on $I$; \medskip
	  
	  $\bullet$\,\,$\sup_I|u_m|\leq M$ and $ \sup_I|\mathcal{A}_{u_m}| \leq L_M$. \medskip
	  
	  \noindent In particular, $u_m$ satisfies $(\ast)$ for any $m$.
	  We can then apply Proposition \ref{prop.compactness},
	   which provides us with a solution $u_0$ of \eqref{eq.generalODEgeneralBVP} such that
	   (up to a sub-sequence)
		$$u_m(t)\to u_0(t) \quad \text{and}
	   \quad \mathcal{A}_{u_m}(t)\to\mathcal{A}_{u_0}(t) \qquad \text{for every $t\in I$}.$$
        Since $\mu_m\to\overline{\nu}$ and $\rho$ is continuous,
		$u_0$ solves $\mathrm{(D_{\overline{\nu}})}$; hence
		$$u_0(T) = \rho(u_0(0)).$$
		We now observe that, since $\mu_m > \overline{\nu} = \sup V$, then $\mu_m\notin V$;
		as a consequence, since $\alpha\leq u_m\leq \beta$ on $I$
		and $u_m$ satisfies $(\ast)$ for every $m$, we necessarily have
		$$g(u_m(0),u_m(T),\mathcal{A}_{u_m}(0),\mathcal{A}_{u_m}(T)) < 0.$$
		From this, by the continuity of $g$ (see assumption (G1)), we get
		\begin{equation} \label{eq.touseguzero}
		 g(u_0(0),u_0(T),\mathcal{A}_{u_0}(0),\mathcal{A}_{u_0}(T)) \leq 0.
		\end{equation}
		On the other hand, since both $x_{\overline{\nu}}$ 
		and $u_0$ solve $\mathrm{(D_{\overline{\nu}})}$,
		we have 
		$$\text{$u_0(0) = x_{\overline{\nu}}(0) = 
		\overline{\nu}$ and $u_0(T) = x_{\overline{\nu}}(T) = \rho(\overline{\nu})$};$$
		moreover, since $u_m\geq x_{\overline{\nu}}$ on $I$ for every $m$, 
		then the same is true of $u_0$. 
		From this, by exploiting once again Lemma \ref{lem.technicalAx} 
	    (with $\alpha = x_{\overline{\nu}}$ and $\beta = u_m$, which are
	    solutions of $\mathrm{(D_{\overline{\nu}})}$), we infer that
		$$\mathcal{A}_{u_0}(0)\geq \mathcal{A}_{x_0}(0) \quad \text{and}
		\quad \mathcal{A}_{u_0}(T)\leq \mathcal{A}_{x_0}(T).$$
		By \eqref{eq.touseguzero}, the by monotonicity of $g$
		and (ii) above, we then obtain
		\begin{align*}
		 & 0 \geq g(u_0(0),u_0(T),\mathcal{A}_{u_0}(0),\mathcal{A}_{u_0}(T))
		 = g(x_{\overline{\nu}}(0),x_{\overline{\nu}}(T),
		 \mathcal{A}_{u_0}(0),\mathcal{A}_{u_0}(T)) \\[0.1cm]
		 & \qquad\qquad\geq g(x_{\overline{\nu}}(0),x_{\overline{\nu}}(T), 
		 \mathcal{A}_{x_{\overline{\nu}}}(0), \mathcal{A}_{x_{\overline{\nu}}}(T)) \geq 0, 
		\end{align*}
		and this shows that $u_0$ solves \eqref{eq.maingeneralBVPgh}.
		Finally, since $\alpha\leq u_m\leq \beta$ on $I$ and $u_m$ satisfies $(\ast)$
		for every $m$, we conclude that $u_0$ satisfies 
		\eqref{eq.solutionxfunctionalinterval}-\eqref{eq.estimsolMLgeneralgh}. 
	  \end{proof}
	  As a particular case of Theorem \ref{thm.maingeneralBVP}, we have
	  the following result.
	  \begin{cor} \label{cor.existencePeriodic}
	   Let us assume that \emph{all} the hypotheses of Theorem \ref{thm.mainconcrete}
	   are satisfied; moreover, 
	   if $\alpha,\beta \in W^{1,p}(I)$ are as in assumption \emph{(A1')}, we suppose that
	   the following inequality hold:
	\begin{equation*}
	 \begin{cases}
	 \mathcal{A}_\alpha(0) \geq \mathcal{A}_\alpha(T), \\[0.2cm]
	 \alpha(T) = \alpha(0),
	 \end{cases} \quad
	 \begin{cases}
	 \mathcal{A}_\beta(0) \leq \mathcal{A}_\beta(T), \\[0.2cm]
	 \beta(T) = \beta(0).
	 \end{cases}
	\end{equation*}
	Finally, let us assume that the function
	$a$ satisfies the
	condition:
	$$a(0,x) \neq 0 \quad \text{and} \quad a(T,x) \neq 0 \qquad \text{for every $x\in\R$}.$$
	Then, there exists 
	\emph{(}at least\emph{)} one solution $x\in W^{1,p}(I)$ of
	$$
	\begin{cases}
	 \dsy \dsy\Big(\Phi\big(a(t,x(t))\,x'(t)\big)\Big)' = 
	 f(t,x(t),x'(t)), \qquad \text{a.e.\,on $I$}, \\[0.2cm]
	 \mathcal{A}_x(0) = \mathcal{A}_x(T), \\[0.2cm]
	 x(0) = x(T).
	\end{cases}
	$$
	  \end{cor}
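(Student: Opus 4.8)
The plan is to recognize the periodic problem as the special instance of the general boundary value problem \eqref{eq.maingeneralBVPgh} obtained by choosing
$$\rho := \mathrm{id}_\R \qquad \text{and} \qquad g(u,v,w,z) := w - z,$$
and then to invoke Theorem \ref{thm.maingeneralBVP}. First I would check that, with these choices, the abstract boundary conditions in \eqref{eq.maingeneralBVPgh} collapse exactly onto the periodic ones: the relation $x(T) = \rho(x(0))$ becomes $x(T) = x(0)$, while the equation $g(x(0),x(T),\mathcal{A}_x(0),\mathcal{A}_x(T)) = 0$ becomes $\mathcal{A}_x(0) = \mathcal{A}_x(T)$, as desired.

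Next I would verify the structural hypotheses (G1)--(G2). Assumption (G1) is immediate, since $\rho = \mathrm{id}_\R$ is continuous and strictly increasing on $\R$. As for (G2), the function $g$ is clearly continuous on $\R^4$; moreover, for fixed $u,v,z$ the map $w \mapsto w - z$ is increasing (which gives $\mathrm{(G2)}_1$), while for fixed $u,v,w$ the map $z \mapsto w - z$ is decreasing (which gives $\mathrm{(G2)}_2$). Thus both (G1) and (G2) hold.

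It then remains to translate the compatibility condition \eqref{eq.assumptionlowerupper} on the pair $\alpha,\beta$ into the hypotheses of the corollary. With the present choice of $g$ and $\rho$, the left-hand system in \eqref{eq.assumptionlowerupper} reads
$$\mathcal{A}_\alpha(0) - \mathcal{A}_\alpha(T) \geq 0 \quad \text{and} \quad \alpha(T) = \alpha(0),$$
which is precisely the assumption $\mathcal{A}_\alpha(0) \geq \mathcal{A}_\alpha(T)$, $\alpha(T) = \alpha(0)$ made in the corollary; analogously, the right-hand system becomes $\mathcal{A}_\beta(0) \leq \mathcal{A}_\beta(T)$ together with $\beta(T) = \beta(0)$. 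Since, finally, the non-vanishing condition $a(0,x) \neq 0$, $a(T,x) \neq 0$ (for every $x \in \R$) is assumed verbatim in both statements, all the hypotheses of Theorem \ref{thm.maingeneralBVP} are met.

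That theorem then yields a solution $x \in W^{1,p}(I)$ of \eqref{eq.maingeneralBVPgh}---hence of the periodic problem---satisfying $\alpha \leq x \leq \beta$ on $I$ together with the bounds \eqref{eq.estimsolMLgeneralgh}, which completes the argument. The only point requiring genuine care is the matching of the monotonicity directions prescribed by (G2) with the signs of the inequalities in \eqref{eq.assumptionlowerupper}; beyond this bookkeeping no analytic difficulty arises, since the entire content of the corollary is a direct specialization of the already-established Theorem \ref{thm.maingeneralBVP}.
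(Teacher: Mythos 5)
Your proposal is correct and coincides with the paper's own proof: both reduce the periodic problem to Theorem \ref{thm.maingeneralBVP} via the choices $\rho(r)=r$ and $g(u,v,w,z)=w-z$, checking (G1)--(G2) and matching \eqref{eq.assumptionlowerupper} with the corollary's hypotheses. Your write-up simply spells out the bookkeeping that the paper leaves as ``trivial.''
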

	\begin{proof}
	 It is a straightforward consequence of Theorem \ref{thm.maingeneralBVP}
	 applied to 
	 $$\rho(r) = r \quad \text{and} \quad g(u,v,w,z) = w-z$$
	 (which trivially satisfy assumptions (G1) and (G2)). This ends the proof.
	\end{proof}
	\medskip
	We conclude the present section by  turning our attention
	to Sturm-Lio\-uvil\-le-type and Neumann-type problems
	associated with the ODE \eqref{eq.generalODEgeneralBVP}.
	
	\noindent To be more precise, we consider the following boundary value problems:
	\begin{equation} \label{eq.maingeneralBVPpq}
	 \begin{cases}
	 \dsy \dsy\Big(\Phi\big(a(t,x(t))\,x'(t)\big)\Big)' = 
	 f(t,x(t),x'(t)), \qquad \text{a.e.\,on $I$}, \\[0.2cm]
	 p(x(0),\mathcal{A}_x(0)) = 0,\,\,q(x(T),\mathcal{A}_x(T)) = 0.
	 \end{cases}
	\end{equation}
	Here, the functions $p,q:\R^2\longto\R$ satisfies the following general
	assumptions:
	\begin{itemize}
	 \item[(S1)] $p\in C(\R^2,\R)$ and, for every $s\in\R$, the map
	 $p(s,\cdot)$ is increasing on $\R$; \medskip
	 \item[(S2)] $q\in C(\R^2,\R)$ and, for every $s\in\R$, the map
	 $q(s,\cdot)$ is decreasing on $\R$.
	\end{itemize}
	The following theorem is the second main result of this section.
	\begin{thm} \label{thm.maingeneralBVPBIS}
	Let us assume that \emph{all} the hypotheses of Theorem \ref{thm.mainconcrete} are
	satisfied, and that the functions $p$ and $q$ fulfill the assumptions \emph{(S1)-(S2)}
	introduced above.	
	Moreover, if $\alpha,\beta \in W^{1,p}(I)$ are as in assumption \emph{(A1')}, we suppose that
	the following inequality hold:
	\begin{equation} \label{eq.assumptionlowerupperSturm}
	 \begin{cases}
	 p(\alpha(0),\mathcal{A}_\alpha(0)) \geq 0, \\[0.2cm]
	 q(\alpha(T),\mathcal{A}_\alpha(T))\geq 0;
	 \end{cases} \quad
	 \begin{cases}
	 p(\beta(0),\mathcal{A}_\beta(0)) \leq 0, \\[0.2cm]
	 q(\beta(T),\mathcal{A}_\beta(T))\leq 0.
	 \end{cases}
	\end{equation}
	Finally, let us assume that $a$ satisfies the
	``compatibility'' condition:
	$$a(0,x) \neq 0 \quad \text{and} \quad a(T,x) \neq 0 \qquad \text{for every $x\in\R$}.$$
	Then the problem \eqref{eq.maingeneralBVPpq}
	possesses one solution $x\in W^{1,p}(I)$ such that
	\begin{equation} \label{eq.solutionxfunctionalintervalSturm}
	 \alpha(t)\leq x(t)\leq\beta(t) \qquad\text{for every $t\in I$}.
	\end{equation}
	Furthermore, if $M > 0$ is any real number
	such that $\sup_I|\alpha|,\,\sup_I|\beta|\leq M$
	and $L_M > 0$ is as in Theorem \ref{thm.mainconcrete}
	\emph{(}see \eqref{eq.choiceL2}\emph{)},
	then
   \begin{align}
    \max_{t\in I}\big|x(t)\big|\leq M \quad \text{\emph{and}} \quad
    \max_{t\in I}\big|\mathcal{A}_x(t)\big| \leq L. \label{eq.estimsolMLgeneralpq}
   \end{align}
	\end{thm}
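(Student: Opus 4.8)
The plan is to prove Theorem \ref{thm.maingeneralBVPBIS} by a \emph{nested double-supremum} argument, dispatching the two separated boundary conditions one at a time, each time in the spirit of the proof of Theorem \ref{thm.maingeneralBVP}. Since the functional coupling $x(T)=\rho(x(0))$ is absent here, Theorem \ref{thm.maingeneralBVP} cannot be invoked directly; instead I would parametrize by the two endpoints \emph{separately}. The inner step handles the left condition $p(x(0),\mathcal{A}_x(0))=0$, and the outer step the right condition $q(x(T),\mathcal{A}_x(T))=0$.

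\textbf{Inner step.} First I fix $\nu_2\in[\alpha(T),\beta(T)]$ and look at the Dirichlet problems $(\mathrm{D}_{\nu_1,\nu_2})$ with $x(0)=\nu_1$, $x(T)=\nu_2$, for $\nu_1\in[\alpha(0),\beta(0)]$; Theorem \ref{thm.mainconcrete} furnishes for each such $\nu_1$ a solution with $\alpha\leq x\leq\beta$ obeying the a priori bounds $(\ast)$ (the estimates \eqref{eq.estimsolML}). I then set
\[
V_{\nu_2}:=\{\nu_1 : \exists\,x \text{ solving }(\mathrm{D}_{\nu_1,\nu_2}),\ \alpha\leq x\leq\beta,\ (\ast),\ p(x(0),\mathcal{A}_x(0))\geq 0\}
\]
and mimic Claims I--II of Theorem \ref{thm.maingeneralBVP}: using Lemma \ref{lem.technicalAx}(i), the inequality $p(\alpha(0),\mathcal{A}_\alpha(0))\geq 0$ from \eqref{eq.assumptionlowerupperSturm}, and the monotonicity of $p(s,\cdot)$ in (S1), I show $\alpha(0)\in V_{\nu_2}$, while Proposition \ref{prop.compactness} gives $\overline{\nu}_1:=\sup V_{\nu_2}\in V_{\nu_2}$. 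Splitting into the cases $\overline{\nu}_1=\beta(0)$ (invoke $p(\beta(0),\mathcal{A}_\beta(0))\leq0$ and Lemma \ref{lem.technicalAx}(i) with $\beta$ as upper solution) and $\overline{\nu}_1<\beta(0)$ (approximate from the right and combine Proposition \ref{prop.compactness} with Lemma \ref{lem.technicalAx}(i) comparing the limit to the known solution) forces $p(x(0),\mathcal{A}_x(0))=0$. Thus for every $\nu_2$ I obtain a solution $\widetilde{x}_{\nu_2}$ satisfying the \emph{left} condition, with $\alpha\leq\widetilde{x}_{\nu_2}\leq\beta$ and $(\ast)$.

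\textbf{Outer step.} Now I vary $\nu_2$ and set
\[
W:=\{\nu_2\in[\alpha(T),\beta(T)] : \exists\,x \text{ solving the left-condition problem},\ x(T)=\nu_2,\ \alpha\leq x\leq\beta,\ (\ast),\ q(x(T),\mathcal{A}_x(T))\geq 0\}.
\]
The same scheme applies with the roles of $0$ and $T$, and of (S1) and (S2), interchanged: using Lemma \ref{lem.technicalAx}(ii), the inequality $q(\alpha(T),\mathcal{A}_\alpha(T))\geq0$, and the fact that $q(s,\cdot)$ is \emph{decreasing}, I get $\alpha(T)\in W$, and Proposition \ref{prop.compactness} yields $\overline{\nu}_2:=\sup W\in W$ with a solution $x^\ast$. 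A crucial point is that the outer limits \emph{preserve} the left condition: since $\mathcal{A}_{x_n}(0)\to\mathcal{A}_{x^\ast}(0)$ by Proposition \ref{prop.compactness} and $p$ is continuous, one still has $p(x^\ast(0),\mathcal{A}_{x^\ast}(0))=0$. If $\overline{\nu}_2=\beta(T)$ I conclude via $q(\beta(T),\mathcal{A}_\beta(T))\leq0$; if $\overline{\nu}_2<\beta(T)$ I re-run the inner step with the pair $(x^\ast,\beta)$ in place of $(\alpha,\beta)$ --- legitimate because $x^\ast$ is itself a solution, hence a lower solution, and satisfies $p(x^\ast(0),\mathcal{A}_{x^\ast}(0))=0\geq0$ --- producing, for $\mu_m\downarrow\overline{\nu}_2$, solutions $u_m\in[x^\ast,\beta]$ of the left-condition problem. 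Since $\mu_m\notin W$ forces $q(u_m(T),\mathcal{A}_{u_m}(T))<0$, a final application of Proposition \ref{prop.compactness} and Lemma \ref{lem.technicalAx}(ii) (comparing the limit $u_0$ with $x^\ast$ at $T$) squeezes $q(u_0(T),\mathcal{A}_{u_0}(T))=0$. This $u_0$ solves \eqref{eq.maingeneralBVPpq}, and \eqref{eq.solutionxfunctionalintervalSturm}--\eqref{eq.estimsolMLgeneralpq} follow from $\alpha\leq x^\ast\leq u_0\leq\beta$ and the passage of $(\ast)$ to the limit.

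The main obstacle I anticipate is the \emph{bookkeeping of the nested construction} rather than any single estimate: I must verify that re-running the inner (left-condition) argument with the updated lower solution $x^\ast$ still closes --- which hinges precisely on $x^\ast$ being a genuine solution with $p=0$ at the left endpoint --- and that the outer compactness limit does not destroy the left boundary condition already secured. Once these compatibility checks are settled, every individual inequality is a routine pairing of Lemma \ref{lem.technicalAx} with the one-sided monotonicity of $p$ and $q$, exactly as in the proof of Theorem \ref{thm.maingeneralBVP}.
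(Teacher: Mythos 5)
Your proposal is correct and follows essentially the same route as the paper: what you call the ``inner step'' is packaged there as Lemma \ref{lem.prelimgeneralBVPBIS} (proved by feeding $\rho\equiv\nu$ and $g(u,v,w,z)=p(u,w)$ into Theorem \ref{thm.maingeneralBVP}, after first manufacturing auxiliary lower/upper solutions with the prescribed right endpoint value), and your ``outer step'' is precisely the paper's sup-over-$\nu$ argument, including the re-run of the left-condition solvability with the pair $(x_{\overline{\nu}},\beta)$ justified by Remark \ref{rem.assumptionsalphabetap}. The compatibility checks you flag --- that $x^\ast$ is a genuine solution with $p=0$ at the left endpoint, and that Proposition \ref{prop.compactness} preserves that condition in the limit --- are exactly the ones the paper settles.
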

	The proof of Theorem \ref{thm.maingeneralBVPBIS} relies
	on the following lemma.
	\begin{lem} \label{lem.prelimgeneralBVPBIS}
	 Let the assumptions and the notation of Theorem \ref{thm.maingeneralBVPBIS}
     apply. 
     Then, for every fixed $\nu\in[\alpha(T),\beta(T)]$, the boundary value problem
     $$
      \mathrm{(D_\nu)} \qquad\quad
      \begin{cases}
       \dsy\Big(\Phi\big(a(t,x(t))\,x'(t)\big)\Big)' = f(t,x(t),x'(t)), \qquad \text{a.e.\,on $I$}, \\[0.2cm]
       p(x(0),\mathcal{A}_x(0)) = 0, \\[0.2cm]
       x(T) = \nu.
      \end{cases}
     $$
     possesses at least one solution $x\in W^{1,p}(I)$ such that
     $\alpha\leq x\leq \beta$ on $I$.
     Fur\-ther\-more, if $M$ and $L_M$ are as in the statement of Theorem \ref{thm.maingeneralBVPBIS},
     then
     \begin{equation} \label{eq.estimLemmageneralpq}
      \sup_{t\in I}|x(t)|\leq M \quad \text{and}\quad
     \sup_{t\in I}|\mathcal{A}_x(t)|\leq L_M.
     \end{equation}
	\end{lem}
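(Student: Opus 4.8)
The plan is to adapt the shooting argument from the proof of Theorem~\ref{thm.maingeneralBVP}, now shooting on the left endpoint $x(0)$ while keeping the Dirichlet datum $x(T)=\nu$ fixed. Fix $\nu\in[\alpha(T),\beta(T)]$. For every $\mu\in[\alpha(0),\beta(0)]$ I would consider the auxiliary Dirichlet problem
$$(\mathrm{P}_\mu)\qquad \Big(\Phi\big(a(t,x)\,x'\big)\Big)'=f(t,x,x'),\quad x(0)=\mu,\quad x(T)=\nu;$$
since $\mu\in[\alpha(0),\beta(0)]$ and $\nu\in[\alpha(T),\beta(T)]$, Theorem~\ref{thm.mainconcrete} provides a solution $x_\mu\in W^{1,p}(I)$ with $\alpha\leq x_\mu\leq\beta$ on $I$ and satisfying $\sup_I|x_\mu|\leq M$ and $\sup_I|\mathcal{A}_{x_\mu}|\leq L_M$. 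I then introduce the set
\begin{align*}
 & V:=\big\{\mu\in[\alpha(0),\beta(0)]:\text{there is $x_\mu$ solving $(\mathrm{P}_\mu)$ with $\alpha\leq x_\mu\leq\beta$,}\\
 & \qquad\quad \sup_I|x_\mu|\leq M,\ \sup_I|\mathcal{A}_{x_\mu}|\leq L_M,\ \text{and $p(\mu,\mathcal{A}_{x_\mu}(0))\geq 0$}\big\},
\end{align*}
and the goal is to show that $\overline{\mu}:=\sup V$ yields a solution with $p(\overline{\mu},\mathcal{A}_{x_{\overline{\mu}}}(0))=0$, i.e.\ a solution of $(\mathrm{D}_\nu)$.

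First I would verify that $\alpha(0)\in V$, so that $V\neq\varnothing$. Taking $\mu=\alpha(0)$ and a corresponding solution $x_\mu$, I regard $x_\mu$ as an upper solution and $\alpha$ as a lower solution sharing the value $\alpha(0)$ at $t=0$; since $a(0,\cdot)$ never vanishes, Lemma~\ref{lem.technicalAx}(i) gives $\mathcal{A}_\alpha(0)\leq\mathcal{A}_{x_\mu}(0)$, and the monotonicity of $p(\alpha(0),\cdot)$ from (S1) together with \eqref{eq.assumptionlowerupperSturm} yields $p(\alpha(0),\mathcal{A}_{x_\mu}(0))\geq p(\alpha(0),\mathcal{A}_\alpha(0))\geq 0$. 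Next I would show $\overline{\mu}\in V$: choosing $\mu_n\in V$ with $\mu_n\to\overline{\mu}$ and associated solutions $x_n$, the uniform bounds allow me to invoke Proposition~\ref{prop.compactness}, which furnishes (up to a subsequence) a solution $x_0$ of the ODE with $x_n\to x_0$ and $\mathcal{A}_{x_n}\to\mathcal{A}_{x_0}$ pointwise on $I$. Passing to the limit gives $x_0(0)=\overline{\mu}$, $x_0(T)=\nu$, $\alpha\leq x_0\leq\beta$, the same uniform bounds, and, by continuity of $p$, $p(\overline{\mu},\mathcal{A}_{x_0}(0))\geq 0$, so $\overline{\mu}\in V$.

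It then remains to promote $\overline{\mu}$ to an actual solution of $(\mathrm{D}_\nu)$. If $\overline{\mu}=\beta(0)$, I apply Lemma~\ref{lem.technicalAx}(i) with $x_{\overline{\mu}}$ as lower solution and $\beta$ as upper solution (both equal to $\beta(0)$ at $t=0$) to obtain $\mathcal{A}_{x_{\overline{\mu}}}(0)\leq\mathcal{A}_\beta(0)$; monotonicity of $p$ and \eqref{eq.assumptionlowerupperSturm} then force $p(\beta(0),\mathcal{A}_{x_{\overline{\mu}}}(0))\leq 0$, while $\overline{\mu}\in V$ gives the reverse inequality, so equality holds and $x_{\overline{\mu}}$ solves $(\mathrm{D}_\nu)$. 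If instead $\overline{\mu}<\beta(0)$, I pick $\mu_m\downarrow\overline{\mu}$ with $\mu_m\in(\overline{\mu},\beta(0)]$ and, using $x_{\overline{\mu}}$ as a lower solution and $\beta$ as an upper solution in Theorem~\ref{thm.mainconcrete} (note $\nu=x_{\overline{\mu}}(T)\leq\beta(T)$), I get solutions $u_m$ of $(\mathrm{P}_{\mu_m})$ with $x_{\overline{\mu}}\leq u_m\leq\beta$ and the uniform bounds. Since $\mu_m>\sup V$ we have $\mu_m\notin V$, whence $p(\mu_m,\mathcal{A}_{u_m}(0))<0$. Proposition~\ref{prop.compactness} yields a limit $u_0$ solving $(\mathrm{P}_{\overline{\mu}})$ with $u_0\geq x_{\overline{\mu}}$, and continuity of $p$ gives $p(\overline{\mu},\mathcal{A}_{u_0}(0))\leq 0$; on the other hand $u_0$ and $x_{\overline{\mu}}$ share the value $\overline{\mu}$ at $t=0$, so Lemma~\ref{lem.technicalAx}(i) gives $\mathcal{A}_{x_{\overline{\mu}}}(0)\leq\mathcal{A}_{u_0}(0)$ and hence $p(\overline{\mu},\mathcal{A}_{u_0}(0))\geq p(\overline{\mu},\mathcal{A}_{x_{\overline{\mu}}}(0))\geq 0$. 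Equality then holds and $u_0$ is the desired solution of $(\mathrm{D}_\nu)$, the estimates \eqref{eq.estimLemmageneralpq} being inherited from the uniform bounds.

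The main obstacle I expect is the case $\overline{\mu}<\beta(0)$: one must both produce comparison solutions $u_m$ lying above $x_{\overline{\mu}}$---which is legitimate precisely because a solution of $(\mathrm{P}_{\overline{\mu}})$ is simultaneously a lower solution of the ODE, so Theorem~\ref{thm.mainconcrete} applies to the pair $(x_{\overline{\mu}},\beta)$---and then squeeze the sign of $p(\overline{\mu},\mathcal{A}_{u_0}(0))$ between $\leq 0$ (from $\mu_m\notin V$ and continuity) and $\geq 0$ (from Lemma~\ref{lem.technicalAx} and $\overline{\mu}\in V$). Throughout, the hypothesis $a(0,x)\neq 0$ is what makes Lemma~\ref{lem.technicalAx} available and thus underlies every one-sided comparison of the boundary quantities $\mathcal{A}_\cdot(0)$.
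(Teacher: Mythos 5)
Your argument is correct, but it takes a genuinely different route from the paper. The paper proves this lemma by \emph{reduction} to Theorem \ref{thm.maingeneralBVP}: it sets $\rho(r):=\nu$ (constant) and $g(u,v,w,z):=p(u,w)$, notes that (S1) makes these satisfy (G1)--(G2), and then manufactures a compatible pair of lower/upper solutions by solving two auxiliary Dirichlet problems --- $x_1$ with data $(\alpha(0),\nu)$ and $x_2$ with data $(\beta(0),\nu)$, the second using the pair $(x_1,\beta)$ --- checking via Lemma \ref{lem.technicalAx} and (S1) that $x_1,x_2$ satisfy \eqref{eq.assumptionlowerupper}, so that Theorem \ref{thm.maingeneralBVP} applies directly. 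You instead bypass Theorem \ref{thm.maingeneralBVP} entirely and re-run its shooting/supremum machinery from scratch at the left endpoint: define $V$ by the sign condition $p(\mu,\mathcal{A}_{x_\mu}(0))\geq 0$, show $\alpha(0)\in V$ and $\sup V\in V$ via Proposition \ref{prop.compactness}, and squeeze the sign of $p$ at $\overline{\mu}$ using Lemma \ref{lem.technicalAx} in both the $\overline{\mu}=\beta(0)$ and $\overline{\mu}<\beta(0)$ cases. All the individual steps check out (in particular the application of Theorem \ref{thm.mainconcrete} to the pair $(x_{\overline{\mu}},\beta)$ with the same constants $M$, $L_M$, and the deduction $p(\mu_m,\mathcal{A}_{u_m}(0))<0$ from $\mu_m\notin V$, both of which mirror steps already present in the paper's proof of Theorem \ref{thm.maingeneralBVP}). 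What the paper's route buys is brevity --- the heavy lifting is delegated to an already-proved theorem and the lemma's proof reduces to constructing and verifying two auxiliary solutions; what your route buys is independence from the precise form of the boundary operator $g$ in Theorem \ref{thm.maingeneralBVP} and from the constraint $x(T)=\rho(x(0))$, at the cost of essentially duplicating a page of argument.
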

	\begin{proof}
	 We fix $\nu\in[\alpha(T),\beta(T)]$ and we consider the following functions: \medskip
     
     $(\star)$\,\,$\rho:\R\to\R, \quad \rho(r) := \nu$; \medskip
     
     $(\star)$\,\,$g:\R^4\to\R, \quad g(u,v,w,z) := p(u,w)$. \medskip
     
     \noindent Then, by means of these functions, 
     we can re-write the problem $(\mathrm{D}_\nu)$ as
     $$
      \begin{cases}
       \dsy\Big(\Phi\big(a(t,x(t))\,x'(t)\big)\Big)' = f(t,x(t),x'(t)), \qquad \text{a.e.\,on $I$}, \\[0.2cm]
       g(x(0),x(T),\mathcal{A}_x(0),\mathcal{A}_x(T)) = 0, \\[0.2cm]
       x(T) = \rho(x(0)).
      \end{cases}
     $$
     Now, taking into account $(\mathrm{S1})$, it is readily
     seen that $\rho$ and $g$ satisfy, re\-spec\-ti\-ve\-ly, (G1) and (G2)
     in the statement of Theorem \ref{thm.maingeneralBVP}; thus, to prove the lemma,
     it suffices to show the existence 
     of a lower and a upper solution 
     for \eqref{eq.generalODEgeneralBVP} satisfying
     (A1') and \eqref{eq.assumptionlowerupper}
     (with the above choices of $\rho$ and $g$). \medskip
     
     To this end we first observe that, by assumption,
     $\alpha$ and $\beta$ are, re\-spec\-ti\-ve\-ly, a lower 
     and a upper solution for \eqref{eq.generalODEgeneralBVP} 
     satisfying (A1') (that is,
     $\alpha\leq\beta$ on $I$); as a consequence, by Theorem \ref{thm.mainconcrete}, 
	the Dirichlet problem
	$$
	\mathrm{(D)_1}\qquad\quad
	\begin{cases}
       \dsy\Big(\Phi\big(a(t,x(t))\,x'(t)\big)\Big)' = f(t,x(t),x'(t)), \qquad \text{a.e.\,on $I$}, \\[0.2cm]
       x(0) = \alpha(0),\,\,x(T) = \nu
	\end{cases}
	$$
	possesses (at least) one solution $x_1\in W^{1,p}(I)$ such that
	$\alpha\leq x_1\leq \beta$ on $I$. Moreover,
	if $M$ and $L_M$ are as in the statement of Theorem \ref{thm.maingeneralBVP},
	we have
		\begin{equation} \label{eq.estimx1lowersol}
	 \sup_{t\in I}|x_1(t)| \leq M \quad \text{and} 
	 \quad \sup_{t\in I}|\mathcal{A}_{x_1}(t)| \leq L_M.
	\end{equation}
	We claim that the function $x_1$, which is obviously a lower
	solution of \eqref{eq.generalODEgeneralBVP}, satisfies
	the first assumption in \eqref{eq.assumptionlowerupper}
	(with $g$ as above). In fact,
	since 
	$$\text{$x_1(0) = \alpha(0)$ and $x_1\geq \alpha$ on $I$,}$$
	from Lemma \ref{lem.technicalAx} (with $x_1$ in place of $\beta$) we infer that
	$$\mathcal{A}_{x_1}(0)\geq \mathcal{A}_\alpha(0);$$ 
	as a consequence, by 
	assumption (S1) and \eqref{eq.assumptionlowerupperSturm}, we obtain
	\begin{align*}
    & g(x_1(0),x_1(T),\mathcal{A}_{x_1}(0),\mathcal{A}_{x_1}(T))
    = p(x_1(0),\mathcal{A}_{x_1}(0)) \\[0.1cm]
    & \qquad\quad = p(\alpha(0),\mathcal{A}_{x_1}(0)) 
    \geq p(\alpha(0),\mathcal{A}_\alpha(0)) \geq 0.
	\end{align*}
	Furthermore, since $x_1$ solves $\mathrm{(D)_1}$, we have $x_1(T) = 
	\nu = \rho(x_1(0))$, 
	and this proves
	that $x_1$ satisfies the first assumption in \eqref{eq.assumptionlowerupper}.

	 We now turn to prove the existence of a upper solution $x_2$ of
	 \eqref{eq.generalODEgeneralBVP} such that $x_2\geq x_1$ on $I$
	 and satisfying the second assumption in \eqref{eq.assumptionlowerupper}.

	 First of all, we notice that $x_1$ and $\beta$ are, respectively, 
	 a lower and a upper solution for \eqref{eq.generalODEgeneralBVP}
	 such that $x_1\leq\beta$ on $I$; moreover, 
	 $$\nu = x_1(T)\in[x_1(T),\beta(T)].$$
	 Finally, by
	 \eqref{eq.estimx1lowersol} and the choice of $M$, we have
	 $$\sup_{t\in I}|x_1(t)|,\,\,\sup_{t\in I}|\beta(t)|\leq M.$$
	 As a consequence, by Theorem \ref{thm.mainconcrete}, the Dirichlet problem
	 $$\mathrm{(D)_2}\qquad\quad
	\begin{cases}
       \dsy\Big(\Phi\big(a(t,x(t))\,x'(t)\big)\Big)' = f(t,x(t),x'(t)), \qquad \text{a.e.\,on $I$}, \\[0.2cm]
       x(0) = \beta(0),\,\,x(T) = \nu	
	\end{cases}$$
	 has a solution $x_2\in W^{1,p}(I)$ such that 
	 $x_1\leq x_2\leq \beta$ on $I$, further satisfying
	 \begin{equation} \label{eq.estimx2uppersol}
	  \sup_{t\in I}|x_2(t)| \leq M \quad \text{and} \quad 
	  \sup_{t\in I}|\mathcal{A}_{x_2}(t)| \leq L_M,
	 \end{equation}
	 for the same $M > 0$ fixed at beginning (and $L_M$ as in Theorem
	 \ref{thm.mainconcrete}). We claim that $x_2$, which is obviously
	 a upper solution of \eqref{eq.generalODEgeneralBVP}, satisfies
	 the second assumption in \eqref{eq.assumptionlowerupper}. 
	 In fact, 
	 since 
	 $$\text{$x_2(0) = \beta(0)$ and $x_2\leq\beta$ on $I$},$$ 
	 by exploiting
	 Lemma \ref{lem.technicalAx} (with $x_2$ in place of $\alpha$) we get
	 $$\mathcal{A}_{x_2}(0)\leq\mathcal{A}_{\beta}(0);$$ 
	 thus, by
	 assumption (G2) and again \eqref{eq.assumptionlowerupperSturm} we have
	\begin{align*}
    & g(x_2(0),x_2(T),\mathcal{A}_{x_2}(0),\mathcal{A}_{x_2}(T))
    = p(x_2(0),\mathcal{A}_{x_2}(0)) \\[0.1cm]
    & \qquad\quad = p(\beta(0),\mathcal{A}_{x_2}(0)) 
    \leq p(\beta(0),\mathcal{A}_\beta(0)) \leq 0.
	\end{align*}
	Furthermore, since $x_2$ solves $(\mathrm{D}_2)$, one has $x_2(T) = \nu
	= \rho(x_2(0)) $,
	and this proves that $x_2\geq x_1$ satisfies the second assumption
	in \eqref{eq.assumptionlowerupperSturm}. \medskip
	
	Gathering together all these facts, we can conclude that
	all the as\-sum\-ptions in Theorem \ref{thm.maingeneralBVP} are
	fulfilled
	(with the above choice of $g$ and $\rho$); 
    thus,
	there exists (at least)
	one solution $x\in W^{1,p}(I)$ of $\mathrm{(D_\nu)}$ such that
	$$\alpha\leq x_1\leq x\leq x_2\leq \beta \quad \text{on $I$}.$$  
	In particular, by the choice of $M$ and $L_M$ (according 
	to \eqref{eq.choiceL2})
	and the fact that $x_1,\,x_2$ fulfill, respectively, 
    \eqref{eq.estimx1lowersol} and
	\eqref{eq.estimx2uppersol}, we deduce that
	$$\sup_{t\in I}|x(t)|\leq M \quad \text{and}\quad
     \sup_{t\in I}|\mathcal{A}_x(t)|\leq L_M,$$
	with \emph{the very same $M, L_M > 0$ fixed at the beginning}.
	This ends the proof.  	
	\end{proof}
	\begin{rem} \label{rem.assumptionsalphabetap}
	 Let the assumptions and the notation
	 of Lemma \ref{lem.prelimgeneralBVPBIS} apply.

	 By giving a closer inspection to the proof
	 of this lemma, we see that the only property
	 of $\alpha$ and $\beta$ we have used is the following (see \eqref{eq.assumptionlowerupperSturm}):
	 $$\big(\bigstar\big)\qquad\quad
	 p(\alpha(0),\mathcal{A}_\alpha(0))\geq 0 \qquad\text{and}\qquad
	 p(\beta(0),\mathcal{A}_\beta(0))\leq 0.$$
	 Hence, Lemma \ref{lem.prelimgeneralBVPBIS} still holds
	 if we replace \eqref{eq.assumptionlowerupperSturm} with the weaker $\big(\bigstar\big)$.
	\end{rem}
	Thanks to Lemma \ref{lem.prelimgeneralBVPBIS}, 
	we are able to prove Theorem \ref{thm.maingeneralBVPBIS}.
	\begin{proof} [Proof (of Theorem \ref{thm.maingeneralBVPBIS})]
    Let $\nu\in[\alpha(T),\beta(T)]$ be fixed. 
    By Lemma \ref{lem.prelimgeneralBVPBIS}, there exists (at least)
    one solution $x_\nu\in W^{1,p}(I)$ of the BVP
    $$
	 \mathrm{(D_\nu)} \qquad\quad
	 \begin{cases}
	  \dsy\Big(\Phi\big(a(t,x(t))\,x'(t)\big)\Big)' = f(t,x(t),x'(t)), \qquad \text{a.e.\,on $I$}, \\[0.2cm]
       p(x(0),\mathcal{A}_x(0)) = 0, \\[0.2cm]
       x(T) = \nu,
	 \end{cases}
    $$
	such that $\alpha\leq x_\nu\leq \beta$ on $I$; moreover,
	if $M,L_M > 0$ are as in the statement of the theorem
	(that is, $L_M$ is as in \eqref{eq.choiceL2}), we have
	(see Lemma \ref{lem.prelimgeneralBVPBIS})
	$$
	\big(\ast)\qquad\quad
	\sup_{t\in I}|x_{\nu}(t)|\leq M \quad \text{and} \quad
	\sup_{t\in I}|\mathcal{A}_{x_\nu}(t)|\leq L_M.$$
	We then consider the following set:
	\begin{align*}
	 & V := \Big\{\text{$\nu\in[\alpha(T),\beta(T)]$\,:\,\,$\exists$ a solution
	   $x_\nu\in W^{1,p}(I)$ of $(\mathrm{D}_\nu)$ s.t.\,$\alpha\leq x_\nu\leq\beta$,} \\
	   & \qquad\qquad\qquad \text{$x_\nu$ satisfies $(\ast)$ and
	   $q(x_\nu(T),\mathcal{A}_{x_\nu}(T))\geq 0$}\Big\}.
	\end{align*}
	
	\textsc{Step I:} We have $\nu = \alpha(T)\in V$. In fact,
	by Lemma \ref{lem.prelimgeneralBVPBIS}, there exists a
    solution $x_\nu\in W^{1,p}(I)$ of $\mathrm{(D_\nu)}$ such that
	$\alpha\leq x_\nu\leq \beta$ and satisfying $(\ast)$; in particular,
	we have $x_\nu(T) = \alpha(T)$.
	Hence, by Lemma \ref{lem.technicalAx} we get
	$$\mathcal{A}_{x_\nu}(T) \leq \mathcal{A}_\alpha(T).$$
	From this, by assumption (S2) and \eqref{eq.assumptionlowerupperSturm}, we obtain
	$$q(x_\nu(T),\mathcal{A}_{x_\nu}(T)) 
	= q(\alpha(T),\mathcal{A}_{x_{\nu}}(T)) \geq q(\alpha(T),\mathcal{A}_\alpha(T)) \geq 0.$$
	This proves that $\nu\in V$, as claimed. In particular, $V\neq\varnothing$. \medskip
	
	\textsc{Step II:} Setting $\overline{\nu} := \sup V$, we have
	$\overline{\nu}\in V$. In fact, if $\overline{\nu} = \alpha(T)$,
    by Step I we know that $\overline{\nu}\in V$;
	if, instead, $\overline{\nu} > \alpha(T)$, we can choose a sequence
	$\{\nu_n\}_n\subseteq V$ such that $\nu_n\to\overline{\nu}$
	as $n\to\infty$ and $\nu_n\leq \overline{\nu}$ for every $n$.
	Then, for every natural $n$ there exists
	a solution $x_n\in W^{1,p}(I)$ of $(\mathrm{D}_{\nu_n})$
	such that \medskip
	
	(a)\,\,$\alpha\leq x_n\leq \beta$ on $I$; \medskip
	
	(b)\,\,$x_n$ satisfies $(\ast)$; \medskip
	
	(c)\,\,$q(x_n(T),\mathcal{A}_{x_n}(T))\geq 0$.  \medskip
	
	\noindent On account of (b) we can apply Proposition
	\ref{prop.compactness}, which provides us with a solution 
	$x_0\in W^{1,p}(I)$ of \eqref{eq.generalODEgeneralBVP} such that,
	up to a sub-sequence,
	$$x_n(t)\to x_0(t) \quad \text{and}
	   \quad \mathcal{A}_{x_n}(t)\to\mathcal{A}_{x_0}(t) \qquad \text{for every $t\in I$}.$$
	Now, since $\nu_n\to\overline{\nu}$ and $p$ is continuous, we see that
	$x_0$ solves $\mathrm{(D_{\overline{\nu}})}$; hence
	$$p(x_0(0),\mathcal{A}_{x_0}(0)) = 0.$$
	Moreover,
	since $x_n$ satisfies $(\ast)$ 
	and $\alpha\leq x_n\leq \beta$ on $I$ for every natural $n$, then
 	 the same is true of $x_0$. Finally, by (c)
 	 and the continuity of $q$, one has
	\begin{equation*} 
	 q(x_0(T),\mathcal{A}_{x_0}(T)) =
	\lim_{n\to\infty}q(x_n(T),\mathcal{A}_{x_n}(T)) \geq 0.
	\end{equation*}
	This proves that $\overline{\nu}\in V$, as claimed. \medskip
	
	Now we have established Claims I and II, we can finally prove
	the existence of a solution to
	\eqref{eq.maingeneralBVPpq}. In fact, let $\overline{\nu}
	= \sup V \in [\alpha(T),\beta(T)]$,
	and let $x_{\overline{\nu}}\in W^{1,p}(I)$ be 
	a solution of $\mathrm{(D_{\overline{\nu}})}$
	satisfying $(\star)$ and such that \medskip
	
	(i) $\alpha\leq x_{\overline{\nu}}\leq \beta$ on $I$; \medskip
	
	(ii) $q(x_{\overline{\nu}}(T),\mathcal{A}_{x_{\overline{\nu}}}(T))\geq 0$. \medskip
	
	\noindent If $\overline{\nu} = \beta(T)$, we have 
	$$\text{$x_{\overline{\nu}}(T) = \overline{\nu} = \beta(T)$\,\,
	and \,\, $p(x_{\overline{\nu}}(0),\mathcal{A}_{x_{\overline{\nu}}}(0)) = 0$};$$ 
	moreover, since
	$x_{\overline{\nu}}\leq\beta$ on $I$, Lemma \ref{lem.technicalAx} implies that
	$\mathcal{A}_{x_{\overline{\nu}}}(T)\geq \mathcal{A}_{\beta}(T)$;
	from this, by (ii),
	the monotonicity of $q$ (see (S2)) and \eqref{eq.assumptionlowerupperSturm},
	we obtain
	$$0\leq q(x_{\overline{\nu}}(T),\mathcal{A}_{x_{\overline{\nu}}}(T)) 
	= q(\beta(T),\mathcal{A}_{x_{\overline{\nu}}}(T))
	\leq q(\beta(T),\mathcal{A}_\beta(T))\leq 0,$$
	and this proves that $x_{\overline{\nu}}$ is a solution
	of \eqref{eq.maingeneralBVPpq} satisfying \eqref{eq.solutionxfunctionalintervalSturm}
	and \eqref{eq.estimsolMLgeneralpq}. \medskip
	
	If, instead, $\overline{\nu} < \beta(T)$, we choose a sequence $\{\mu_m\}_m\subseteq[\alpha(T),\beta(T)]$
	such that $\mu_m\to\overline{\nu}$ as $m\to\infty$ and $\mu_m > \overline{\nu}$
	for any $m$. Since $x_{\overline{\nu}}$ solves $\mathrm{(D_{\overline{\nu}})}$
    and
	$x_{\overline{\nu}}\leq\beta$ on $I$, we can think of $
	x_{\overline{\nu}}$ and $\beta$ as, respectively,
	a lower and a upper solution of \eqref{eq.generalODEgeneralBVP}
	satisfying (A1')
	in Theorem \ref{thm.mainconcrete} and
	$\big(\bigstar\big)$ in Remark \ref{rem.assumptionsalphabetap}
	(see indeed \eqref{eq.assumptionlowerupperSturm}); 
	moreover, by $(\ast)$ and the choice of $M$ we have
	$$\sup_{t\in I}|x_{\overline{\nu}}(t)|,\,\,\sup_{t\in I}|\beta(t)|\leq M.$$
	As a consequence, 
	by Remark \ref{rem.assumptionsalphabetap}, for every $m\in\N$ there exists a solution
	  $u_m\in W^{1,p}(I)$ of $\mathrm{(D_{\mu_m})}$ such that \medskip
	  
	  $\bullet$\,\,$\alpha\leq x_{\overline{\nu}}\leq u_m\leq\beta$ on $I$; \medskip
	  
	  $\bullet$\,\,$\sup_I|u_m|\leq M$ and $ \sup_I|\mathcal{A}_{u_m}| \leq L_M$. \medskip

	\noindent 
	In particular, $u_m$ satisfies $(\ast)$ for every $m$. We can then apply
	Proposition \ref{prop.compactness}, which provides us with a solution
	$u_0$ of \eqref{eq.generalODEgeneralBVP} such that (up to a sub-sequence)
	$$u_m(t)\to u_0(t) \quad \text{and}
	   \quad \mathcal{A}_{u_m}(t)\to\mathcal{A}_{u_0}(t) \qquad \text{for every $t\in I$}.$$
	Thus, since $\mu_m\to\overline{\nu}$ and $p$ is continuous,    
	we see that $u_0$ solves $\mathrm{(D_{\overline{\nu}})}$; hence
	$$p(u_0(0),\mathcal{A}_{u_0}(0)) = 0.$$	
	 We now observe that,
	since $\mu_m > \overline{\nu} = \sup V$, then $\mu_m\notin V$; as a consequence, since
	$\alpha\leq u_m\leq\beta$ on $I$
	and $u_m$ satisfies $(\ast)$ for every $m$,
	we necessarily have
	\begin{equation*}
	 q(u_m(T),\mathcal{A}_{u_m}(T)) < 0 \quad \text{for every $m\in\N$}
	\end{equation*}
	From this, by the continuity of $q$ (see assumption (S2)), we get
	\begin{equation} \label{eq.estimqsolu0}
	 q(u_0(T),\mathcal{A}_{u_0}(T)) = \lim_{m\to\infty}q(u_m(T),\mathcal{A}_{u_m}(T)) \leq 0.
	\end{equation}
	On the other hand, since both $x_{\overline{\nu}}$ and $u_0$ solve
	$\mathrm{(D_{\overline{\nu}})}$, we have
	$$x_{\overline{\nu}}(T) = \overline{\nu} = u_0(T);$$ 
	moreover,
	since $u_m\geq x_{\overline{\nu}}$ for every natural $m$
	(by the construction of $u_m$), then the same is true of $u_0$. From this,
	by using Lemma \ref{lem.technicalAx} we infer that
	$$\mathcal{A}_{u_0}(T) \leq \mathcal{A}_{x_{\overline{\nu}}}(T).$$
	By \eqref{eq.estimqsolu0}, the monotonicity of
	$q$ (see (S2)) and (ii) above, we then get
	\begin{align*}
	0 \geq q(u_0(T),\mathcal{A}_{u_0}(T)) = q(x_{\overline{\nu}}(T),\mathcal{A}_{u_0}(T))
	\geq q(x_{\overline{\nu}}(T),\mathcal{A}_{x_{\overline{\nu}}}(T)) \geq 0,
	\end{align*}
	and this shows that $u_0$ solves \eqref{eq.estimsolMLgeneralpq}. Finally, since
	$\alpha\leq u_m\leq \beta$ and $u_m$ sa\-ti\-sfi\-es $(\ast)$ for every $m$,
	we conclude that $u_0$ fulfills
	\eqref{eq.solutionxfunctionalintervalSturm}-\eqref{eq.estimsolMLgeneralpq}.  
	\end{proof}
	From Theorem \ref{thm.maingeneralBVPBIS} we easily deduce the following results.
	\begin{cor} \label{cor.solvabilitySturm}
	 Let us assume that \emph{all} the hypotheses of Theorem \ref{thm.mainconcrete}
	 are satisfied. Moreover, let $\ell_1,\,\ell_2,\,\nu_1,\,\nu_2\in\R$ and 
	 let $m_1,\,m_2\in [0,\infty)$. 
	 If $\alpha$ and $\beta$ are as in assumption
	 \emph{(A1)'}, we suppose that 
	 $$
	  \begin{cases}
	   \ell_1\,\alpha(0) + m_1\,\mathcal{A}_\alpha(0) \geq \nu_1, \\
	   \ell_2\,\alpha(T) - m_2\,\mathcal{A}_\alpha(T) \geq \nu_2; \\
	  \end{cases} \qquad
	  \begin{cases}
	   \ell_1\,\beta(0) + m_1\,\mathcal{A}_\beta(0) \leq \nu_1, \\
	   \ell_2\,\beta(T) - m_2\,\mathcal{A}_\beta(T) \leq \nu_2
	  \end{cases}
	 $$
	 Finally, we assume that the function $a$ fulfills the the following assumption:
	$$a(0,x) \neq 0 \quad \text{and} \quad a(T,x) \neq 0 \qquad \text{for every $x\in\R$}.$$
	 Then, there exists a solution
	 $x\in W^{1,p}(I)$ of the {Sturm-Liouville problem}
	 $$
	  \begin{cases}
	   \dsy \dsy\Big(\Phi\big(a(t,x(t))\,x'(t)\big)\Big)' = 
	   f(t,x(t),x'(t)), \qquad \text{a.e.\,on $I$}, \\[0.2cm]
	   \ell_1\,x(0) + m_1\,\mathcal{A}_x(0) = \nu_1, \\[0.1cm]
	   \ell_2\,x(T) - m_2\,\mathcal{A}_x(T) = \nu_2.
	  \end{cases}
	 $$
	\end{cor}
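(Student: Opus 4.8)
The plan is to recognize the Sturm--Liouville problem as a special instance of the general boundary value problem \eqref{eq.maingeneralBVPpq} and then to invoke Theorem \ref{thm.maingeneralBVPBIS} directly. To this end, I would introduce the two affine functions
$$p(s,w) := \ell_1\,s + m_1\,w - \nu_1 \qquad \text{and} \qquad q(s,z) := \ell_2\,s - m_2\,z - \nu_2,$$
both defined on $\R^2$. With this choice, the boundary conditions $\ell_1\,x(0) + m_1\,\mathcal{A}_x(0) = \nu_1$ and $\ell_2\,x(T) - m_2\,\mathcal{A}_x(T) = \nu_2$ become exactly $p(x(0),\mathcal{A}_x(0)) = 0$ and $q(x(T),\mathcal{A}_x(T)) = 0$, so that the Sturm--Liouville problem is identical to \eqref{eq.maingeneralBVPpq}.

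Next I would verify that $p$ and $q$ fulfil assumptions (S1)--(S2). Being affine, both functions are continuous on $\R^2$; moreover, since $m_1\geq 0$, for every fixed $s\in\R$ the map $p(s,\cdot)$ is non-decreasing, which is (S1), and since $m_2\geq 0$, the map $q(s,\cdot)$ is non-increasing, which is (S2). It then remains to check the sign conditions \eqref{eq.assumptionlowerupperSturm}. By the very definition of $p$ and $q$, the hypotheses on $\alpha$ give $p(\alpha(0),\mathcal{A}_\alpha(0)) = \ell_1\,\alpha(0) + m_1\,\mathcal{A}_\alpha(0) - \nu_1 \geq 0$ and $q(\alpha(T),\mathcal{A}_\alpha(T)) = \ell_2\,\alpha(T) - m_2\,\mathcal{A}_\alpha(T) - \nu_2 \geq 0$, while the hypotheses on $\beta$ yield the two opposite inequalities; together these are precisely \eqref{eq.assumptionlowerupperSturm}. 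Since the compatibility condition $a(0,x)\neq 0$, $a(T,x)\neq 0$ is assumed outright, all the hypotheses of Theorem \ref{thm.maingeneralBVPBIS} are in force, and I would conclude that \eqref{eq.maingeneralBVPpq}---hence the Sturm--Liouville problem---possesses a solution $x\in W^{1,p}(I)$ satisfying $\alpha\leq x\leq\beta$ on $I$ together with the corresponding bounds.

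The only delicate point is the reading of ``increasing''/``decreasing'' in (S1)--(S2) when $m_1 = 0$ or $m_2 = 0$: in that degenerate case $p$ (respectively $q$) is \emph{constant} in its last argument, hence only non-strictly monotone. This causes no trouble, however, because a direct inspection of the proof of Theorem \ref{thm.maingeneralBVPBIS} reveals that only the weak monotonicity---expressed through the inequalities $\geq$ and $\leq$, as in the chain $q(\alpha(T),\mathcal{A}_{x_\nu}(T))\geq q(\alpha(T),\mathcal{A}_\alpha(T))\geq 0$---is ever invoked. Thus the affine choice above is legitimate and the corollary follows.
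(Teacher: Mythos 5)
Your proposal is correct and follows exactly the paper's own argument: it applies Theorem \ref{thm.maingeneralBVPBIS} with the affine choices $p(s,t)=\ell_1 s+m_1 t-\nu_1$ and $q(s,t)=\ell_2 s-m_2 t-\nu_2$, which satisfy (S1)--(S2) precisely because $m_1,m_2\geq 0$. Your extra remark on reading the monotonicity in (S1)--(S2) in the weak (non-strict) sense when $m_1=0$ or $m_2=0$ is a sound observation consistent with how the paper uses these assumptions.
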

	\begin{proof}
	 It is a direct consequence of Theorem \ref{thm.maingeneralBVPBIS} applied to the functions
	 $$p(s,t) := \ell_1\,s + m_1\,t - \nu_1 \qquad \text{and}
	 \qquad
	 q(s,t) := \ell_2\,s - m_2\,t - \nu_2,$$
	 which satisfy (S1)-(S2) (since $m_1,m_2\geq 0$). This ends the proof.  
	\end{proof}
	\begin{cor} \label{cor.solvabilityNeumann}
	 Let us assume that \emph{all} the hypotheses of Theorem \ref{thm.mainconcrete}
	 are satisfied. Moreover, let $\nu_1,\,\nu_2\in\R$ be arbitrarily fixed.
	 If $\alpha$ and $\beta$ are as in assumption
	 \emph{(A1)'}, we suppose that the following conditions are satisfied:
	 $$
	  \begin{cases}
	   \mathcal{A}_\alpha(0) \geq \nu_1, \\
	   \mathcal{A}_\alpha(T) \leq \nu_2; \\
	  \end{cases} \qquad
	  \begin{cases}
	   \mathcal{A}_\beta(0) \leq \nu_1, \\
	   \mathcal{A}_\beta(T) \geq \nu_2
	  \end{cases}
	 $$
	 Finally, we assume that the function $a$ fulfills the following assumption:
	$$a(0,x) \neq 0 \quad \text{and} \quad a(T,x) \neq 0 \qquad \text{for every $x\in\R$}.$$
	 Then, there exists a solution
	 $x\in W^{1,p}(I)$ of the {Neumann problem}
	 $$
	  \begin{cases}
	   \dsy \dsy\Big(\Phi\big(a(t,x(t))\,x'(t)\big)\Big)' = f(t,x(t),x'(t)), 
	   \qquad \text{a.e.\,on $I$}, \\[0.2cm]
	   \mathcal{A}_x(0) = \nu_1, \\[0.1cm]
	   \mathcal{A}_x(T) = \nu_2.
	  \end{cases}
	 $$
	\end{cor}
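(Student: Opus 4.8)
The plan is to read the Neumann problem as the special instance of the Sturm--Liouville-type problem \eqref{eq.maingeneralBVPpq} in which the boundary operators $p$ and $q$ depend only on their second argument, and then to invoke Theorem \ref{thm.maingeneralBVPBIS} directly. Concretely, I would set
$$p(s,t) := t - \nu_1 \qquad \text{and} \qquad q(s,t) := \nu_2 - t,$$
so that the Neumann conditions $\mathcal{A}_x(0) = \nu_1$ and $\mathcal{A}_x(T) = \nu_2$ become \emph{exactly} $p(x(0),\mathcal{A}_x(0)) = 0$ and $q(x(T),\mathcal{A}_x(T)) = 0$. This reduces everything to checking that the hypotheses of the theorem are met for this particular $p,q$.

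First I would verify the structural assumptions (S1)--(S2). For each fixed $s\in\R$ the map $p(s,\cdot)$ is $t\mapsto t-\nu_1$, which is (strictly) increasing, while $q(s,\cdot)$ is $t\mapsto \nu_2-t$, which is (strictly) decreasing; both functions are plainly continuous on $\R^2$. Hence $p$ and $q$ are admissible boundary functions for Theorem \ref{thm.maingeneralBVPBIS}. The compatibility condition $a(0,x)\neq 0$, $a(T,x)\neq 0$ is assumed verbatim, so it transfers without comment.

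Next I would translate the lower/upper-solution inequalities \eqref{eq.assumptionlowerupperSturm}. With the above choice of $p,q$ they read $p(\alpha(0),\mathcal{A}_\alpha(0)) = \mathcal{A}_\alpha(0)-\nu_1\geq 0$, $q(\alpha(T),\mathcal{A}_\alpha(T)) = \nu_2-\mathcal{A}_\alpha(T)\geq 0$, $p(\beta(0),\mathcal{A}_\beta(0)) = \mathcal{A}_\beta(0)-\nu_1\leq 0$, and $q(\beta(T),\mathcal{A}_\beta(T)) = \nu_2-\mathcal{A}_\beta(T)\leq 0$; these are precisely the four conditions $\mathcal{A}_\alpha(0)\geq\nu_1$, $\mathcal{A}_\alpha(T)\leq\nu_2$, $\mathcal{A}_\beta(0)\leq\nu_1$, $\mathcal{A}_\beta(T)\geq\nu_2$ imposed in the statement. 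Having matched every hypothesis, Theorem \ref{thm.maingeneralBVPBIS} applies and returns a solution $x\in W^{1,p}(I)$ with $\alpha\leq x\leq\beta$ on $I$ and the bounds \eqref{eq.estimsolMLgeneralpq}, which is the desired conclusion.

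There is essentially no analytic obstacle here: the whole argument is a translation into the framework already built. The only point demanding care is the bookkeeping of signs, namely choosing $q(s,t)=\nu_2-t$ rather than $t-\nu_2$, so that the decreasing monotonicity required by (S2) holds \emph{and} the four upper/lower-solution inequalities come out with the correct orientation. (Equivalently, one could present the result as the case $\ell_1=\ell_2=0$, $m_1=m_2=1$ of Corollary \ref{cor.solvabilitySturm}, taking $-\nu_2$ in place of the Sturm--Liouville datum at $t=T$ to absorb the sign in $\ell_2\,x(T)-m_2\,\mathcal{A}_x(T)$; but the direct appeal to Theorem \ref{thm.maingeneralBVPBIS} is cleaner.)
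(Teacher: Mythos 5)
Your proposal is correct and coincides with the paper's own proof: the authors also apply Theorem \ref{thm.maingeneralBVPBIS} with $p(s,t):=t-\nu_1$ and $q(s,t):=\nu_2-t$, noting that these satisfy (S1)--(S2). Your additional verification that \eqref{eq.assumptionlowerupperSturm} reduces to the stated inequalities on $\mathcal{A}_\alpha$ and $\mathcal{A}_\beta$ is just the bookkeeping the paper leaves implicit.
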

	\begin{proof}
	 It is another direct consequence of Theorem \ref{thm.maingeneralBVPBIS} applied to 
	 $$p(s,t) := t - \nu_1 \qquad \text{and}
	 \qquad
	 q(s,t) := \nu_2 - t,$$
	 which obviously satisfy assumptions (S1)-(S2). This ends the proof.  
	\end{proof}


\end{document}